\newtheorem{theorem}{Theorem}
\newtheorem{lemma}[theorem]{Lemma}
\newtheorem{proposition}[theorem]{Proposition}
\theoremstyle{definition}
\newtheorem{definition}[theorem]{Definition}
\newtheorem{question}[theorem]{Question}
\newtheorem{claim}{Claim}
\theoremstyle{remark}
\newtheorem*{claimproof}{Proof}
\newcommand{\R}{\mathbb{R}}
\newcommand{\N}{\mathbb{N}}
\newcommand{\Int}{\mathrm{Int}}
\newcommand{\ins}{\mathrm{ins}}
\newcommand{\out}{\mathrm{out}}
\newcommand{\claimend}{{\hfill $\blacksquare$}}
\newcommand{\norm}[1]{\left\lVert#1\right\rVert}
\renewcommand{\restriction}{\mathord{\upharpoonright}}
\begin{document}   
     
\makeatletter
\@namedef{subjclassname@2010}{
  \textup{2010} Mathematics Subject Classification 03E15, 54H05 and 54F15}
\makeatother

\title{The complexity of homeomorphism relations on some classes of compacta with bounded topological dimension}

\author{
Jan Dud\'ak\footnote{https://orcid.org/0000-0003-0627-6641}
\footnote{Supported by the grant SVV-2020-260583}, Benjamin Vejnar\footnote{https://orcid.org/0000-0002-2833-5385} \footnote{
This work has been supported by Charles University Research Centre program No.UNCE/SCI/022.
}
\\ Department of Mathematical Analysis
\\ Faculty of Mathematics and Physics, Charles University, Prague, Czechia\\
}
\maketitle
\begin{abstract}
We are dealing with the complexity of the homeomorphism equivalence relation on some classess of metrizable compacta from the viewpoint of invariant descriptive set theory.
We prove that the homeomorphism equivalence relation of absolute retracts in the plane is Borel bireducible with the isomorphism equivalence relation of countable graphs.
In order to stress the sharpness of this result we prove that neither the homeomorphism relation of locally connected continua in the plane, nor the homeomorphism relation of absolute retracts in $\R^3$ is Borel reducible to the isomorphism relation of countable graphs.
We also improve the recent results of Chang and Gao by constructing a Borel reduction from both the homeomorphism equivalence relation of compact subsets of $\R^n$ and the ambient homeomorphism equivalence relation of compact subsets of $[0,1]^n$ to the homeomorphism equivalence relation of $n$-dimensional continua in $\R^{n+1}$.
\end{abstract}

\section{Introduction}

The task of measuring the complexity of an equivalence relation (ER) on a structure is very complex in itself. In this paper we use the notion of Borel reducibility (see Definitions \ref{DefinitionOfReduction}, \ref{DefinitionOfReducibility} and \ref{DefinitionOfBorelReducibility}) and the results of invariant descriptive set theory to compare the complexities of ERs on Polish and standard Borel spaces. For getting familiar with the field of invariant descriptive set theory we recommend the book by Su Gao \cite{Gao}.

Let us mention several ERs that have become milestones in the theory of Borel reductions (in ascending order with respect to their complexity):

\begin{itemize}[noitemsep]
    \item the equality on an uncountable Polish space (equivalently, on $\R$);
    \item the $S_{\infty}$-universal orbit equivalence relation;
    \item the universal orbit equivalence relation;
    \item the universal analytic equivalence relation.
\end{itemize}

Let us present a few examples of ``real-life" ERs with the same complexity as the four relations above, respectively. It was shown by Gromov that the isometry ER of compact metric spaces is Borel bireducible with the equality of real numbers (see e.g. \cite[Theorem 14.2.1]{Gao}). The isomorphism ER of countable graphs is Borel bireducible with the $S_{\infty}$-universal orbit ER (see \cite[Theorem 13.1.2]{Gao}). Melleray proved in \cite{Melleray} that the isometry ER of separable Banach spaces is Borel bireducible with the universal orbit ER. Ferenczi, Louveau and Rosendal proved in \cite{FerencziLouveauRosendal} that the isomorphism ER of separable Banach spaces is Borel bireducible with the universal analytic ER.

In this paper we study the homeomorphism ER on some subclasses of metrizable compacta. A crucial result by Zielinski (see \cite{Zielinski}) states that the homeomorphism ER of metrizable compacta is Borel bireducible with the universal orbit ER. Chang and Gao have shown in \cite{ChangGao} that the same applies to the homeomorphism ER of metrizable continua. This remains true even if we restrict ourselves to locally connected continua, as was proved by Cie\'{s}la in \cite{Ciesla}. Finally, Krupski and Vejnar proved in \cite{KrupskiVejnar} that the homeomorphism ER of absolute retracts possesses the same complexity as the homeomorphism ER of metrizable compacta.

Now let us turn our attention to homeomorphism ERs which are less complex than the universal orbit ER. The homeomorphism ER of compacta in $\R$ is Borel bireducible with the $S_{\infty}$-universal orbit ER (see \cite[Theorem 4.2]{ChangGaoDim}). On the other hand, it is known (it follows e.g. from \cite[Theorem 4.3]{ChangGaoDim}) that for every $n \in \N$ with $n > 1$ the homeomorphism ER of compacta in $\R^n$ is stricly more complex, but it is not known whether it is as complex as the universal orbit ER. The homeomorphism ER of metrizable rim-finite continua was shown in \cite{KrupskiVejnar} to be Borel bireducible with the $S_{\infty}$-universal orbit ER. However, the homeomorphism ER of metrizable rim-finite compacta is strictly more complex, as was shown in the same paper. The homeomorphism ER of dendrites was shown in \cite{CamerloDarjiMarcone} to be Borel bireducible with the $S_{\infty}$-universal orbit ER.

Concerning the finite-dimensional homeomorphism classification problems, Chang and Gao proved in \cite{ChangGaoDim} that both the homeomorphism ER and the ambient homeomorphism ER of compacta in $[0,1]^n$ are Borel reducible to the homeomorphism ER of continua in $[0,1]^{n+2}$ for every $n \in \N$. They also showed that the ambient homeomorphism ER of compacta in $[0,1]^n$ is Borel reducible to the ambient homeomorphism ER of compacta in $[0,1]^{n+1}$ for every $n \in \N$ and it is strictly more complex than the $S_{\infty}$-universal orbit ER when $n>1$.

The main results of our investigation follow:
\begin{itemize}
\item[{[A]}]  The homeomorphism ER of absolute retracts in $\R^2$ is Borel bireducible with the isomorphism ER of countable graphs (Theorem \ref{thmAR2}).
\item[{[B]}] The homeomorphism ER of absolute retracts in $\R^3$ is not classifiable by countable structures (Theorem \ref{thmAR3}).
\item[{[C]}] The homeomorphism ER of locally connected 1-dimensional continua in the plane is not classifiable by countable structures (Theorem \ref{thmLC1}).
\item[{[D]}] The homeomorphism ER of compacta in $\R^n$ is Borel reducible to the homeomorphism ER of $n$-dimensional continua in $\R^{n+1}$ (Theorem \ref{thmCompCont}). 
\item[{[E]}] The ambient homeomorphism ER of compacta in $[0,1]^n$ is Borel reducible to the homeomorphism ER of $n$-dimensional continua in $\R^{n+1}$ (Theorem \ref{AmbientHomeos}).
\end{itemize}

Claim [A] is a generalization of \cite[Theorem 6.7]{CamerloDarjiMarcone}.
Results [B] and [C] witness the sharpness of [A].
Claims [D] and [E] strengthen \cite[Theorem 1]{ChangGaoDim}.

It is worth noting that the Gelfand duality establishes a direct relationship between the complexity of compacta up to homeomorphism and the complexity of commutative unital $C^*$-algebras up to isomorphism. It is often the case that there are natural non-commutative versions of compacta (e.g. in the case of absolute retracts or dendrites \cite{ChigogidzeDranishnikov}). 
Hence, complexity results in topology can serve as conjectures for complexity results on $C^*$-algebras and vice versa (compare e.g. \cite{Sabok} and \cite{Zielinski}).
However, the most intriguing question concerns the complexity of compacta with bounded dimension.

\begin{question}
What is the exact complexity of the homeomorhism ER of $n$-dimensional compacta for $n\geq 1$?
\end{question}

\section{Preliminaries}

In this section we introduce the notation, terminology, definitions and basic facts which will be used throughout this paper.

By a natural number we mean a strictly positive integer. We denote the set of natural numbers by $\N$. In addition, we use the symbol ${\N}_0$ to denote the set of non-negative integers. Therefore, ${\N}_0 = \N \cup \{ 0 \}$.

Recall that a standard Borel space is a measurable space $(X,\mathcal{A})$ for which there exists a Polish topology $\tau$ on $X$ such that the family of all Borel subsets of $(X , \tau)$ is equal to $\mathcal{A}$. It is well-known that for every Polish space $X$ and every Borel set $B \subseteq X$ the measurable space $\big(B , \{ A \subseteq B \, ; \ A \textup{ is Borel in } X \} \big)$ is a standard Borel space. Therefore, every Borel subset of a Polish space can be naturally viewed as a standard Borel space.

In order to compare the complexities of equivalence relations on standard Borel spaces we use the notion of Borel reducibility.

\begin{definition}\label{DefinitionOfReduction}
Let $X$, $Y$ be sets and let $E$, $F$ be equivalence relations on $X$ and $Y$ respectively. A mapping $f \colon X \to Y$ is called a reduction from $E$ to $F$ if for every two points $x , x' \in X$ we have $x E x' \iff f(x) F f(x')$.
\end{definition}

\begin{definition}\label{DefinitionOfReducibility}
Let $X$, $Y$ be Polish spaces and $E$, $F$ equivalence relations on $X$, $Y$ respectively. We say that $E$ is continuously reducible to $F$, and write $E {\leq}_c F$, if there is a continuous reduction from $E$ to $F$. We say that $E$ is continuously bireducible with $F$ if $E {\leq}_c F$ and $F {\leq}_c E$.
\end{definition}

\begin{definition}\label{DefinitionOfBorelReducibility}
Let $X$, $Y$ be standard Borel spaces and $E$, $F$ equivalence relations on $X$, $Y$ respectively. We say that $E$ is Borel reducible to $F$, and write $E {\leq}_B F$, if there is a Borel measurable reduction from $E$ to $F$. We say that $E$ is Borel bireducible with $F$ if $E {\leq}_B F$ and $F {\leq}_B E$.
\end{definition}

An equivalence relation $E$ on a standard Borel space is said to be classifiable by countable structures if there is a countable relation language $\mathcal{L}$ such that $E$ is Borel reducible to the isomorphism equivalence relation of $\mathcal{L}$-structures whose underlying set is $\N$.

Given a class $\mathcal{C}$ of equivalence relations on standard Borel spaces and an element $E \in \mathcal{C}$, we say that $E$ is universal for $\mathcal{C}$ if $F {\leq}_B E$ for every $F \in \mathcal{C}$.

For a Polish group $G$, a standard Borel space $X$ and a Borel action $\varphi \colon G \times X \to X$ we can consider the orbit equivalence relation induced by $\varphi$, that is, the equivalence relation $E$ on $X$ given by
\[ x E x' \iff \exists \, g \in G : \varphi (g,x) = x' . \]

It is known that for every Polish group $G$ there exists an equivalence relation $E_G$ which is universal for the class of all orbit ERs induced by Borel actions of $G$ on standard Borel spaces (see \cite[Theorem 5.1.8]{Gao}). Some of the most studied ERs in the field of invariant descriptive set theory include $E_{S_{\infty}}$ and $E_{G_{\infty}}$, where $S_{\infty}$ stands for the symmetric group on $\N$ and $G_{\infty}$ stands for the universal Polish group. An ER on a standard Borel space is classifiable by countable structures if and only if it is Borel reducible to $E_{S_{\infty}}$ (see \cite[Theorem 2.39]{Hjorth}). The equivalence relation $E_{G_{\infty}}$ is universal for the class of all orbit ERs induced by Borel actions of Polish groups on standard Borel spaces (see \cite[Theorem 5.1.9]{Gao}).

The Hilbert cube is the space $[0,1]^{\N}$ endowed with the product topology. We denote the Hilbert cube by $Q$. Recall that every Polish space is homeomorphic to a $G_{\delta}$ subset of $Q$. For a Polish space $X$, we denote by $\mathcal{K} (X)$ the space of all compact subsets of $X$ equipped with the Vietoris topology. It is well-known that $\mathcal{K} (X)$ is a Polish space. We consider various subspaces of $\mathcal{K} (X)$: We denote by $\mathsf{C} (X)$ the space of all continua in $X$ and by $\mathsf{LC} (X)$ the space of all locally connected continua in $X$. For every $n \in {\N}_0$ we denote by ${\mathsf{C}}_n (X)$ the subspace of $\mathsf{C} (X)$ consisting of those members of $\mathsf{C} (X)$ which are $n$-dimensional. In similar fashion, we define the space ${\mathsf{LC}}_n (X)$, $n \in {\N}_0$. It is an easy exercise that $\mathsf{C} (X)$ is a closed (and thus Borel) subset of $\mathcal{K} (X)$. The set $\mathsf{LC} (X)$ is Borel in $\mathcal{K} (X)$ as well (see \cite{GvM} for reference). Moreover, it is not difficult to show that the set $\{ K \in \mathcal{K} (Q) ; \, \mathrm{dim} K \leq n \}$ is $G_{\delta}$ in $\mathcal{K} (Q)$ for every $n \in {\N}_0$. It follows that for every Polish space $X$ and every $n \in {\N}_0$ the set $\{ K \in \mathcal{K} (X) ; \, \mathrm{dim} K = n \}$ is Borel in $\mathcal{K} (X)$ and (consequently) so are the sets ${\mathsf{C}}_n (X)$ and ${\mathsf{LC}}_n (X)$.

An absolute retract is a topological space which is homeomorphic to a retract of the Hilbert cube. For a Polish space $X$, we denote by $\mathsf{AR} (X)$ the set of all absolute retracts contained in $X$. By \cite[Theorem 2.2]{DobrowolskiRubin}, the set $\mathsf{AR} (Q)$ is $G_{\delta\sigma\delta}$ in $\mathcal{K} (Q)$. Hence, the set $\mathsf{AR} (X)$ is Borel in $\mathcal{K} (X)$ for every Polish space $X$.

For a Polish space $X$ and a Borel set $B \subseteq \mathcal{K} (X)$ the equivalence relation
\[ \big\lbrace (K,L) \in B \times B \, ; \ K \textup{ is homeomorphic to } L \big\rbrace \]
is called the homeomorphism equivalence relation on $B$. In addition, we consider the equivalence relation
\[ \big\lbrace (K,L) \in B \times B \, ; \ \textup{there is a self-homeomorphism of } X \textup{ mapping } K \textup{ onto } L \big\rbrace \]
and call it the ambient homeomorphism equivalence relation on $B$.

For every $n \in \N$ we denote by $H_n$ and $C_n$ the homeomorphism ERs on $\mathcal{K} \big( [0,1]^n \big)$ and $\mathsf{C} \big( [0,1]^n \big)$ respectively. In addition, $R_n$ stands for the ambient homeomorphism ER on $\mathcal{K} \big( [0,1]^n \big)$.

Since the Hilbert cube contains a homeomorphic copy of every metrizable compact space, the homeomorphism ER on $\mathcal{K} (Q)$ is often interpreted as the homeomorphism ER of all metrizable compacta. The homeomorphism ERs on $\mathsf{C} (Q)$, $\mathsf{LC} (Q)$, $\mathsf{AR} (Q)$, etc., can be interpreted in a similar fashion.

Recall that a set $J \subseteq \R^2$ is called a Jordan curve if it is homeomorphic to a circle. By the Jordan curve theorem, for every Jordan curve $J$ the set $\R^2 \setminus J$ consists of exactly two connected components – one bounded and one unbounded. We denote the bounded one by $\ins (J)$ and the unbounded one by $\out (J)$. The Jordan curve theorem also states that the boundary of both $\ins (J)$ and $\out (J)$ is equal to $J$. The following lemma immediately follows.

\begin{lemma}\label{JordUnique}
Let $J_1 , J_2 \subseteq \R^2$ be Jordan curves such that $\ins (J_1) = \ins (J_2)$. Then $J_1 = J_2$.
\end{lemma}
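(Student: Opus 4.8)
The plan is to deduce this immediately from the Jordan curve theorem as recalled just above the statement. The key fact I would invoke is that for any Jordan curve $J \subseteq \R^2$, the region $\ins(J)$ is an open subset of $\R^2$ whose topological boundary $\partial(\ins(J))$ equals $J$. Granting this, the argument is a one-liner: since $\ins(J_1)$ and $\ins(J_2)$ are literally the same subset of $\R^2$, they have the same topological boundary, and therefore
\[ J_1 = \partial\big(\ins(J_1)\big) = \partial\big(\ins(J_2)\big) = J_2 . \]

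There is essentially no obstacle here; the only thing worth a sentence is to make explicit that the boundary operator $A \mapsto \partial A$ on subsets of $\R^2$ is determined by $A$ alone, so that the hypothesis $\ins(J_1) = \ins(J_2)$ genuinely forces equality of the two boundaries. I would then cite (or take as already recalled) the boundary clause of the Jordan curve theorem to finish. No case analysis, no choice of orientation, and no appeal to the bounded/unbounded distinction beyond what is already used to define $\ins(\cdot)$ is needed.
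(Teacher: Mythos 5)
Your proof is correct and is exactly the argument the paper intends: the sentence immediately preceding the lemma recalls that $\partial(\ins(J)) = J$ for any Jordan curve $J$, and the lemma is stated as an immediate consequence, which is precisely the one-line boundary computation you give.
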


The Jordan-Schoenflies theorem states that for every Jordan curve $J$ there is a self-homeomorphism of $\R^2$ which maps $J$ onto the unit circle (and, consequently, $\ins (J)$ is mapped onto the open unit disk). The following lemma is corollary of this theorem.

\begin{lemma}\label{extension}
Let $J_1 , J_2 \subseteq \R^2$ be Jordan curves and assume that $h\colon J_1 \to J_2$ is an onto homeomorphism. Then there exists an onto homeomorphism $\widetilde{h} \colon J_1 \cup \ins (J_1) \to J_2 \cup \ins (J_2)$ such that $\widetilde{h} (x) = h (x)$ for every $x \in J_1$.
\end{lemma}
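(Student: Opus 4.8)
The plan is to transport the problem to the unit circle and the closed unit disk via the Jordan--Schoenflies theorem, and then invoke the classical coning (Alexander) trick to extend a self-homeomorphism of the circle to the disk.

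First I would apply the Jordan--Schoenflies theorem to obtain self-homeomorphisms $g_1,g_2$ of $\R^2$ with $g_i(J_i)=S$, where $S=\{x\in\R^2;\ \norm{x}=1\}$ is the unit circle and $D=\{x\in\R^2;\ \norm{x}\le 1\}$ is the closed unit disk. As remarked in the text, $g_i$ then maps $\ins(J_i)$ onto the open unit disk, so $g_i$ restricts to an onto homeomorphism $J_i\cup\ins(J_i)\to D$. Consequently $\phi:=g_2\circ h\circ(g_1\restriction J_1)^{-1}$ is a self-homeomorphism of $S$, and it suffices to produce a self-homeomorphism $\Phi$ of $D$ with $\Phi\restriction S=\phi$: then $\widetilde h:=\big(g_2\restriction(J_2\cup\ins(J_2))\big)^{-1}\circ\Phi\circ\big(g_1\restriction(J_1\cup\ins(J_1))\big)$ is a composition of onto homeomorphisms, hence an onto homeomorphism $J_1\cup\ins(J_1)\to J_2\cup\ins(J_2)$; and for $x\in J_1$ one has $g_1(x)\in S$, so $\Phi(g_1(x))=\phi(g_1(x))=g_2(h(x))$ and therefore $\widetilde h(x)=h(x)$, as required.

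The key step is the extension of $\phi$. I would define $\Phi(0)=0$ and $\Phi(ry)=r\,\phi(y)$ for $r\in(0,1]$ and $y\in S$. Since every nonzero point of $D$ is uniquely of the form $ry$ with $r\in(0,1]$ and $y\in S$, this is well defined, and the analogous formula with $\phi^{-1}$ in place of $\phi$ defines a two-sided inverse, so $\Phi$ is a bijection. On $D\setminus\{0\}$, $\Phi$ and its inverse are continuous, being compositions of the homeomorphism $x\mapsto(\norm{x},x/\norm{x})$ from $D\setminus\{0\}$ onto $(0,1]\times S$, a homeomorphism of $(0,1]\times S$, and the continuous map $(r,z)\mapsto rz$. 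Finally, continuity of $\Phi$ and of its inverse at $0$ follows from $\norm{\Phi(x)}=\norm{x}$ for all $x\in D$. Hence $\Phi$ is a self-homeomorphism of $D$ restricting to $\phi$ on $S$, which completes the argument.

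No step here is a genuine obstacle; the only points requiring a little care are the coning construction — well-definedness and continuity at the cone point $0$ — and the bookkeeping needed to confirm that the composition $\widetilde h$ really carries $J_1\cup\ins(J_1)$ onto $J_2\cup\ins(J_2)$ and agrees with $h$ on $J_1$.
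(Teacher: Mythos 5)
Your proposal is correct and takes essentially the same approach as the paper: both transport via Jordan--Schoenflies homeomorphisms to the unit disk and extend the induced circle self-homeomorphism by the radial (Alexander) coning trick. The only difference is presentational — you separate the conjugation from the cone extension, whereas the paper writes the composed formula in one line and handles the cone point by an explicit case split.
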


\begin{proof}
Let $U:=\lbrace x \in \R^2 \, ; \ \|x\| < 1 \rbrace$. By the Jordan-Schoenflies theorem, there are onto homeomorphisms $h_1 , h_2 \colon \R^2 \to \R^2$ satisfying
\[h_1 (J_1) = h_2 (J_2) = \partial U \, , \ \ h_1 \big( \ins (J_1) \big) = h_2 \big( \ins (J_2) \big) = U.\]
Denote $y_1:=h_1^{-1} \big( (0,0) \big)$, $y_2:=h_2^{-1} \big( (0,0) \big)$ and let us define a mapping $\widetilde{h} \colon J_1 \cup \ins (J_1) \to J_2 \cup \ins (J_2)$ by
\begin{equation*}
    \widetilde{h} (x) = \begin{cases}
    h_2^{-1} \bigg( \| h_1 (x) \| \cdot \big( h_2 \circ h \circ h_1^{-1} \big) \Big ( \frac{h_1 (x)}{\| h_1 (x) \|} \Big ) \bigg ) & \textup{if } x \neq y_1 \\
    \\
    y_2 & \textup{if } x=y_1 .
\end{cases} \ \ 
\end{equation*}
It is easy to verify that $\widetilde{h}$ is the desired homeomorphism.
\end{proof}

Let us close this chapter by the following lemma, whose proof is an easy exercise.
\begin{lemma}\label{ClosednessOfUnion}
Let $M$ be a metric space, $F \subseteq M$ a closed set and $\mathcal{A} \subseteq \mathcal{P} (M)$ a family such that $\overline{A} \cap F \neq \emptyset$ and $\overline{A} \setminus A \subseteq F$ for every $A \in \mathcal{A}$. If the set $\big\lbrace A \in \mathcal{A} \, ; \, \mathrm{diam} (A) \geq \varepsilon \big\rbrace$ is finite for every $\varepsilon > 0$, then the set $F \cup \bigcup \mathcal{A}$ is closed in $M$.
\end{lemma}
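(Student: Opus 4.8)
The plan is to show that every limit point of $F \cup \bigcup\mathcal{A}$ already belongs to it, using the sequential characterisation of closure available in a metric space. So I would take a sequence $(x_n)$ in $F \cup \bigcup\mathcal{A}$ with $x_n \to x$ and argue that $x \in F \cup \bigcup\mathcal{A}$.

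First I would dispose of the trivial case: if infinitely many of the $x_n$ lie in $F$, then, passing to a subsequence, $x = \lim x_n \in \overline{F} = F$ and we are done. Hence I may discard finitely many terms and assume that every $x_n$ lies in $\bigcup\mathcal{A}$; for each $n$ choose $A_n \in \mathcal{A}$ with $x_n \in A_n$, and, using the hypothesis $\overline{A_n}\cap F \neq \emptyset$, choose $y_n \in \overline{A_n}\cap F$. The one elementary inequality that drives the proof is $d(x_n, y_n) \leq \mathrm{diam}(\overline{A_n}) = \mathrm{diam}(A_n)$, valid because $x_n$ and $y_n$ both lie in $\overline{A_n}$ and passing to the closure does not change the diameter in a metric space.

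Now I split according to the behaviour of $\mathrm{diam}(A_n)$. If $\liminf_n \mathrm{diam}(A_n) = 0$, pass to a subsequence along which $\mathrm{diam}(A_n) \to 0$; then $d(x_n, y_n) \to 0$, so $y_n \to x$, and since each $y_n$ lies in the closed set $F$, we get $x \in F$. Otherwise there is $\varepsilon > 0$ with $\mathrm{diam}(A_n) \geq \varepsilon$ for all large $n$; by hypothesis the $A_n$ then eventually range over the finite set $\{A \in \mathcal{A} : \mathrm{diam}(A) \geq \varepsilon\}$, so by pigeonhole some single $A \in \mathcal{A}$ has $x_n \in A$ for infinitely many $n$, whence $x \in \overline{A}$. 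If $x \in A$ we are done; if not, then $x \in \overline{A}\setminus A \subseteq F$, and again we are done.

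There is no real obstacle here: the only place the diameter hypothesis is used is the dichotomy in the last paragraph, where finiteness of $\{A \in \mathcal{A} : \mathrm{diam}(A) \geq \varepsilon\}$ lets one replace ``infinitely many distinct small pieces'' by ``one fixed piece,'' while the complementary (small-diameter) case is absorbed into $F$ via the points $y_n$. As an alternative, non-sequential route one can argue directly: if $x \notin F$, pick $r > 0$ with the open ball $B(x,r)$ disjoint from $F$; then any $A \in \mathcal{A}$ with $\mathrm{diam}(A) < r/2$ must be disjoint from $B(x, r/2)$, for otherwise a point of $\overline{A}\cap F$ would lie in $B(x,r)$. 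Hence inside $B(x,r/2)$ the set $F \cup \bigcup\mathcal{A}$ coincides with a finite union $\bigcup_{i=1}^{k} A_i$ of ``large'' pieces, so $x \in \bigcup_{i=1}^{k}\overline{A_i}$, and the inclusion $\overline{A_i}\setminus A_i \subseteq F$ forces $x \in A_i$ for some $i$.
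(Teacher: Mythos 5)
Your proposal is correct. The paper does not supply a proof for this lemma (it is labelled "an easy exercise"), so there is no authorial argument to compare against; your sequential argument and the alternative ball-around-$x$ argument sketched at the end are both complete and sound, with all the needed facts (closure of $F$, $\mathrm{diam}(\overline{A}) = \mathrm{diam}(A)$, finiteness of the large pieces, and $\overline{A}\setminus A \subseteq F$) invoked exactly where they are required.
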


\section{Classification results}

In this section we prove that the homeomorphism ER of absolute retracts in the plane is Borel bireducible with the isomorphism ER of countable graphs. We also prove that both the homeomorphism and ambient homeomorphism ERs of compacta in $[0,1]^n$ are Borel reducible to the homeomorphism ER of $n$-dimensional continua in $[0,1]^{n+1}$. Let us start by presenting some results concerning locally connected continua and absolute retracts. The following characterization of planar absolute retracts can be found in \cite[p. 132]{Borsuk}.

\begin{lemma}\label{PlanarAR}
Let $X \subseteq \R^2$. Then $X$ is an absolute retract if and only if $X$ is a locally connected continuum and ${\R}^2 \setminus X$ is connected.
\end{lemma}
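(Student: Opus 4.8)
The plan is to prove the two implications separately, with the forward direction being routine and the backward one carrying the real content.

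For the ``only if'' direction, suppose $X$ is an absolute retract. Then $X$ is compact, being a retract of $Q$, and it is a retract of an absolute neighborhood retract, hence itself an ANR; in particular $X$ is locally connected and connected, so $X$ is a locally connected continuum. Being an absolute retract, $X$ is contractible, so $\check{H}^1(X;\mathbb{Z}) = 0$. Regarding $X$ as a subcontinuum of $S^2 = \R^2 \cup \{\infty\}$ and applying Alexander duality in $S^2$, we get $\widetilde{H}_0(S^2 \setminus X) \cong \check{H}^1(X) = 0$, so $S^2 \setminus X$ is connected. Since $X$ is bounded, $S^2 \setminus X$ is a connected open surface and $\R^2 \setminus X = (S^2 \setminus X) \setminus \{\infty\}$ is obtained from it by deleting a single point, hence is connected as well.

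For the ``if'' direction, let $X$ be a locally connected continuum in $\R^2$ with $\R^2 \setminus X$ connected. Viewing $X \subseteq S^2$, the set $S^2 \setminus X = (\R^2\setminus X)\cup\{\infty\}$ is connected (as $\infty$ lies in its closure, $X$ being bounded), so $X$ is a non-separating subcontinuum of $S^2$. By the classical characterization of cellular sets in the sphere, $X$ is then cellular: there is a decreasing sequence of closed topological $2$-cells $D_1 \supseteq D_2 \supseteq \dots$ with $D_{n+1} \subseteq \mathrm{int}\,D_n$ and $\bigcap_n D_n = X$; here the Jordan curve and Jordan--Schoenflies theorems (cf. Lemmas \ref{JordUnique} and \ref{extension}) are used to produce, inside any prescribed neighborhood of $X$, a Jordan curve enclosing $X$. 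In particular $X$ is cell-like, so it suffices to prove that $X$ is an ANR: a cell-like compact ANR is contractible, and a contractible compact ANR is an absolute retract.

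To see that $X$ is an ANR, I would use that $X$ is a planar (hence finite-dimensional) compactum, for which it is enough to verify local contractibility (local $n$-connectedness for $n \le \dim X$). Equivalently, one constructs a retraction of $D_1$ onto $X$ as a uniform limit of retractions $f_n\colon D_1\to D_n$, where $f_{n+1}$ is obtained from $f_n$ by composing with a suitable retraction $D_n\to D_{n+1}$. This is the step where local connectedness of $X$ is indispensable: using that $X$ is a Peano continuum one chooses the cells $D_n$ and the intermediate retractions adapted to a decomposition of $X$ into small sub-Peano-continua, so that the correction terms $\norm{f_{n+1}-f_n}$ tend to $0$ uniformly (equivalently, so that a small loop near a point $x\in X$ bounds a disk in $\R^2$ that can be pushed into $X$ while staying near $x$). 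I expect this uniform control — arranging the cells and the intermediate retractions so that nothing is moved too far — to be the main obstacle, the remainder being bookkeeping with the Jordan--Schoenflies theorem.
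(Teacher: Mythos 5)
The paper does not prove Lemma~\ref{PlanarAR} itself; it cites Borsuk, so there is no ``paper's proof'' to compare against step by step. Judged on its own merits, your forward direction is complete and correct: compactness of a retract of $Q$, the AR $\Rightarrow$ ANR $\Rightarrow$ locally contractible $\Rightarrow$ locally connected chain, contractibility of a retract of a contractible space, and Alexander duality in $S^2$ all go through, and the passage from connectedness of $S^2\setminus X$ to that of $\R^2\setminus X$ is fine since $\infty$ lies in the \emph{interior} of $S^2\setminus X$.

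The backward direction, however, has a genuine gap: the step you yourself flag as ``the main obstacle'' is not an obstacle in the bookkeeping sense but is the entire content of the implication, and it is left as a sketch. Cellularity of $X$ (equivalently, $X$ being a nested intersection of $2$-cells) is obtained without using local connectedness at all — a pseudo-arc is also cellular — so nothing done up to that point distinguishes the AR case. The argument that remains to be supplied is precisely why local connectedness together with connectedness of the complement forces $X$ to be an ANR (equivalently, locally simply connected, equivalently $1$-ULC). Your parenthetical ``a small loop near $x\in X$ bounds a disk in $\R^2$ that can be pushed into $X$ while staying near $x$'' names the right phenomenon — indeed, by the reasoning underlying Lemma~\ref{insAndInt}, any Jordan curve $J\subseteq X$ satisfies $\ins(J)\subseteq X$ because $\R^2\setminus X$ is connected, so $J$ bounds a disk inside $X$ — but turning that observation into a uniform local-contractibility or retraction estimate is where the proof must actually be done, and you do not do it. Also be careful with the chain ``cell-like compact ANR is contractible'': you would additionally need finite-dimensionality (or some substitute) for ``cell-like'' to imply ``has trivial shape $\Rightarrow$ contractible ANR''; this is fine here since $X\subseteq\R^2$, but it is worth stating. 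In short: forward direction correct; backward direction is an outline whose crucial step — ANR-ness from local connectedness plus non-separation — is identified but not proved.
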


Recall that a topological space is said to be rim-finite if it has an open base consisting of sets with finite boundaries. Combining Lemma \ref{PlanarAR} with \cite[p. 512, Theorem 4]{Kuratowski2}, we obtain the following result.

\begin{lemma}\label{rim-finite}
Let $X \subseteq \R^2$ be an absolute retract. Then $\partial X$ is a rim-finite continuum.
\end{lemma}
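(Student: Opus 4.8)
The plan is to reduce the statement, via the characterization of planar absolute retracts, to a classical fact about frontiers of simply connected planar domains, and then to quote \cite[p. 512, Theorem 4]{Kuratowski2}. First I would apply Lemma \ref{PlanarAR}: since $X\subseteq\R^2$ is an absolute retract, $X$ is a locally connected continuum and $\R^2\setminus X$ is connected. As $X$ is a nonempty compact set, $\R^2\setminus X$ is a nonempty unbounded connected open subset of $\R^2$, so in the one-point compactification $S^2=\R^2\cup\{\infty\}$ the set $U:=S^2\setminus X$ is a connected open subset of $S^2$ with $\infty\in U$ whose complement in $S^2$ is the continuum $X$. In particular $X$ does not separate $S^2$, and it is classical that then $U$ is simply connected, i.e.\ an open $2$-cell (for instance because Alexander duality gives $\tilde H_1(S^2\setminus X)\cong\check H^0(X)=0$, as $X$ is connected, while the fundamental group of a noncompact connected surface is free, so vanishing of $H_1$ forces simple connectedness). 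Since $\infty\in U$, the frontier of $U$ in $S^2$ is exactly $\partial X$ computed in $\R^2$, because the boundary of a set coincides with the boundary of its complement.

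Thus $\partial X$ is the frontier of a simply connected domain $U\subseteq S^2$ whose complement $S^2\setminus U=X$ is a locally connected continuum. Now I would invoke \cite[p. 512, Theorem 4]{Kuratowski2}, which asserts precisely that under these hypotheses $\partial X$ is a rim-finite continuum; this is the desired conclusion. (In the trivial edge case where $X$ is a singleton, $\partial X=X$ is a one-point continuum, hence rim-finite, so nothing is lost.)

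I do not expect any genuine difficulty in the argument itself: the mathematical content lies entirely in Lemma \ref{PlanarAR} and in the cited theorem of Kuratowski, and the intermediate steps are routine. The one thing deserving care is matching the hypotheses supplied by Lemma \ref{PlanarAR} with those demanded by Kuratowski's theorem — in particular whether that theorem is phrased in terms of local connectedness of the complement $X$ of the domain or of the frontier $\partial X$ itself. In the latter case one would additionally need the (also classical) fact that local connectedness of a non-separating plane continuum is inherited by the frontier of its complementary domain, which via Carathéodory's theorem amounts to the continuous extension to the closed disc of a Riemann map of $U$, with boundary values surjecting onto $\partial X$. I expect this bookkeeping, rather than any substantive topology, to be the main and rather mild obstacle.
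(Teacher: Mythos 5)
Your proposal is essentially correct and takes the same route as the paper, whose entire proof is the single sentence "Combining Lemma \ref{PlanarAR} with \cite[p. 512, Theorem 4]{Kuratowski2}, we obtain the following result." The additional machinery you invoke — passing to $S^2$, deriving simple connectedness of the complementary region via Alexander duality, and the Carathéodory fallback — is more than the paper requires and more than the cited Kuratowski theorem needs (that theorem applies to the boundary of any complementary region of a locally connected continuum, so local connectedness of $X$ together with connectedness of $\R^2\setminus X$, both supplied by Lemma \ref{PlanarAR}, already puts you in its hypotheses). So the core step is right; the extra discussion of simple connectedness and Riemann maps is superfluous but harmless.
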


The next lemma can be found in \cite[p. 33]{Whyburn} labeled as the Torhorst theorem.

\begin{lemma}\label{Torhorst}
Let $X \subseteq \R^2$ be a locally connected continuum and $G \subseteq \R^2 \setminus X$ a connected component of $\R^2 \setminus X$. Then $\partial G$ is a locally connected continuum.
\end{lemma}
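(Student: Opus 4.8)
The plan is to verify, one by one, that $\partial G$ is compact, connected and locally connected, the last point being the heart of the matter. Compactness is immediate: since $\R^2$ is locally connected and $\R^2 \setminus X$ is open, the component $G$ is open and is clopen in the subspace $\R^2 \setminus X$, so $\overline{G} \setminus G$ contains no point of $\R^2 \setminus X$ and hence $\partial G \subseteq X$; as $X$ is a continuum it is bounded, so $\partial G$ is bounded, and it is closed as a boundary, hence compact. The same remark gives $\partial H \subseteq X$ for every component $H$ of $\R^2 \setminus X$, a fact used below.

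For connectedness I would use the unicoherence of $\R^2$: whenever $\R^2 = A \cup B$ with $A$, $B$ closed and connected, $A \cap B$ is connected. Take $A = \overline{G}$, connected as the closure of a connected set, and $B = \R^2 \setminus G$, which is closed and equals the union of $X$ with the closures of all the components $H \ne G$ of $\R^2 \setminus X$ (here one uses $\partial H \subseteq X$); each such $\overline{H}$ is connected and meets $X$ along the nonempty set $\partial H$, so $B$ is a union of connected sets all containing $X$ and is therefore connected. Unicoherence yields that $A \cap B = \overline{G} \setminus G = \partial G$ is connected, and it is nonempty since $G$ is a proper nonempty open subset of $\R^2$; hence $\partial G$ is a continuum. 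The same argument applied to an arbitrary continuum $C$ in $\R^2$ shows that for every component $R$ of $\R^2 \setminus C$ the boundary $\partial R$ is a continuum contained in $C$, so $\operatorname{diam}\partial R \le \operatorname{diam} C$.

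The main obstacle is the local connectedness of $\partial G$ — the actual content of the Torhorst theorem. I would use the classical criterion that a continuum $K$ is locally connected if and only if for every $\varepsilon > 0$ there is $\delta > 0$ such that any two points of $K$ at distance less than $\delta$ lie in a common subcontinuum of $K$ of diameter less than $\varepsilon$. Fix $\varepsilon > 0$. Since $X$ is a locally connected continuum it satisfies this; so for $p, q \in \partial G$ close enough we may pick a subcontinuum $L \subseteq X$ with $p,q \in L$ and $\operatorname{diam} L < \varepsilon$, and let $R_L$ be the component of $\R^2 \setminus L$ containing $G$ (well defined since $G$ is connected and disjoint from $L$). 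By the remark above, $\partial R_L$ is a subcontinuum of diameter less than $\varepsilon$ contained in $L$, and $p, q \in \partial R_L$: indeed $p, q \in L$ forces $p, q \notin R_L$, while $p, q \in \overline{G} \subseteq \overline{R_L}$. What is \emph{not} automatic — and where the real work of Torhorst's argument lies — is that $\partial R_L$, or at least a subcontinuum of it joining $p$ to $q$, can be taken inside $\partial G$: in general $\partial R_L \not\subseteq \partial G$, and one must choose $L$ and control the component $R_L$ with more care so that the small connecting continuum lies \emph{within} $\partial G$, thereby verifying the criterion for $\partial G$. I would carry out this classical argument following \cite{Whyburn}. (One might hope to shortcut via the fact that $G$ is simply connected — its complement $\R^2 \setminus G$ is connected, as shown above — together with Carathéodory-type boundary regularity for conformal maps, but that route is circular unless one first establishes the local connectedness of $\R^2 \setminus G$, which is no easier.)
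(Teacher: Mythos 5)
The paper does not prove this statement at all: it is quoted verbatim from Whyburn's book (``The next lemma can be found in \cite[p.~33]{Whyburn} labeled as the Torhorst theorem''), so there is no proof in the paper to compare yours against. Your handling of compactness (via $G$ clopen in $\R^2\setminus X$, hence $\partial G\subseteq X$) and of connectedness (via unicoherence of the plane applied to $A=\overline G$, $B=\R^2\setminus G$, the latter being connected as $X$ together with the closures $\overline H$ of the other components, each meeting $X$ along $\partial H\subseteq X$) is correct and clean, and is a genuinely nice way to get the ``continuum'' half of the statement.

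For local connectedness your proposal does have a real gap, exactly the one you flag. The continuum $\partial R_L$ is contained in $L$, and $L$ is a small subcontinuum of $X$, but $L$ need not be contained in $\partial G$; consequently $\partial R_L$ need not be contained in $\partial G$, so it cannot serve directly as the small subcontinuum of $\partial G$ joining $p$ to $q$. A concrete obstruction: let $X$ be a circle with a short segment (a ``hair'') attached at a point $r$, let $G$ be the bounded complementary domain, so $\partial G$ is the circle alone, and take $p,q\in\partial G$ near $r$ on opposite sides. A small subcontinuum $L\subseteq X$ containing $p$ and $q$ must pass through $r$ and may well contain part of the hair; since $L$ then has empty interior and connected complement, $R_L=\R^2\setminus L$ and $\partial R_L=L$, which contains the hair segment and so is not a subset of $\partial G$. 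Thus the step from ``$p,q$ lie in a small subcontinuum of $L\subseteq X$'' to ``$p,q$ lie in a small subcontinuum of $\partial G$'' is precisely the content of the Torhorst theorem and requires an additional idea (choosing the connecting continuum to lie along the part of $X$ actually accessible from $G$, or equivalently arguing via Property~S and cyclic element theory). As written, your proposal reduces the statement to that key step and then defers to \cite{Whyburn}, which in effect is what the paper itself does by citing rather than proving the lemma.
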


Combining Lemmata \ref{PlanarAR} and \ref{Torhorst} we obtain the following:

\begin{lemma}\label{BdOfARisLC}
Let $X \subseteq \R^2$ be an absolute retract. Then $\partial X$ is a locally connected continuum.
\end{lemma}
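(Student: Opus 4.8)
The plan is to reduce the statement to Lemma \ref{Torhorst} by taking the complement of $X$ as the relevant component. First I would invoke Lemma \ref{PlanarAR}: since $X \subseteq \R^2$ is an absolute retract, $X$ is a locally connected continuum and $\R^2 \setminus X$ is connected. Being a continuum, $X$ is in particular compact, hence bounded, so $\R^2 \setminus X$ is a nonempty open connected set; therefore $G := \R^2 \setminus X$ is (the unique) connected component of $\R^2 \setminus X$.

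Next I would apply Lemma \ref{Torhorst} to the locally connected continuum $X$ and this component $G$, which immediately yields that $\partial G$ is a locally connected continuum. Finally I would observe that $\partial X = \partial (\R^2 \setminus X) = \partial G$, using the elementary fact that in any topological space the boundary of a set coincides with the boundary of its complement (both equal $\overline{X} \cap \overline{\R^2 \setminus X}$). Combining this identity with the previous step gives that $\partial X$ is a locally connected continuum, as desired.

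The argument is entirely routine; the only points requiring a moment's care are noting that $\R^2 \setminus X \neq \emptyset$ (so that Lemma \ref{Torhorst} has something to be applied to), which follows from the boundedness of the compact set $X$, and the identity $\partial X = \partial (\R^2 \setminus X)$. Neither of these poses a genuine obstacle, so I do not anticipate any real difficulty in this proof.
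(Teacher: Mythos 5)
Your proof is correct and matches the paper's approach exactly: the paper states Lemma \ref{BdOfARisLC} as an immediate consequence of Lemmata \ref{PlanarAR} and \ref{Torhorst}, which is precisely the reduction you carry out, including the observation that $\R^2 \setminus X$ is the unique component and that $\partial X = \partial(\R^2 \setminus X)$.
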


The following lemma easily follows from \cite[p. 513, Theorem 5]{Kuratowski2}.

\begin{lemma}\label{JordSeparace}
Let $C \subseteq \R^2$ be a locally connected continuum such that $\R^2 \setminus C$ is disconnected. Let $A,B \subseteq \R^2 \setminus C$ be continua such that the connected component of $\R^2 \setminus C$ containing $A$ does not contain $B$. Then there exists a Jordan curve $J \subseteq C$ such that one of the sets $A,B$ is contained in $\ins (J)$ and the other one is contained in $\out (J)$.
\end{lemma}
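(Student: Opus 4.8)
The plan is to deduce the statement from \cite[p.~513, Theorem~5]{Kuratowski2}, which (in the form relevant here) asserts that if $C$ is a locally connected plane continuum and $a,b$ are points lying in distinct connected components of $\R^2\setminus C$, then there is a Jordan curve $J\subseteq C$ separating $a$ from $b$ in $\R^2$, i.e.\ with $a\in\ins(J)$ and $b\in\out(J)$ (or vice versa). The whole task is then to reduce our statement about the continua $A$ and $B$ to this statement about two points.

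First I would observe that, since $A$ is connected and contained in $\R^2\setminus C$, it lies entirely in a single connected component $G_A$ of $\R^2\setminus C$; likewise $B\subseteq G_B$ for a single component $G_B$. The hypothesis says precisely that $G_A\neq G_B$. Hence, choosing any $a\in A$ and $b\in B$, the points $a,b$ lie in distinct components of $\R^2\setminus C$, so the cited theorem applies and yields a Jordan curve $J\subseteq C$ with, say, $a\in\ins(J)$ and $b\in\out(J)$.

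It remains to pass back from the points to the continua. Since $J\subseteq C$ while $A,B\subseteq\R^2\setminus C$, we have $A\cap J=B\cap J=\emptyset$; thus $A$ is a connected subset of $\R^2\setminus J=\ins(J)\cup\out(J)$, a union of two disjoint open sets, and therefore $A$ is contained in exactly one of them. Because $a\in A\cap\ins(J)$, that set must be $\ins(J)$, i.e.\ $A\subseteq\ins(J)$; the same argument gives $B\subseteq\out(J)$, as desired.

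I do not expect a genuine obstacle here: the substance is carried entirely by \cite[Theorem~5]{Kuratowski2}, and what remains is the routine observation that a connected set disjoint from a Jordan curve is confined to one of its two complementary domains. The only points needing a moment's care are checking that the hypotheses of the cited theorem are satisfied (local connectedness of $C$, and $a,b$ in different complementary domains of $C$ — both immediate from our assumptions) and noticing that $J$, being a subset of $C$, is automatically disjoint from $A$ and from $B$.
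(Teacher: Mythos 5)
Your proof is correct and is exactly the deduction the paper intends when it says the lemma "easily follows from \cite[p.~513, Theorem~5]{Kuratowski2}": apply that theorem to chosen points $a\in A$, $b\in B$ (which lie in distinct complementary domains of $C$), and then use that a connected set disjoint from the resulting Jordan curve $J$ lies entirely in $\ins(J)$ or entirely in $\out(J)$.
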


\begin{lemma}\label{Komponenta}
Let $T$ be a topological space and $S \subseteq T$. Then every connected component of $\Int (S)$ is also a connected component of $T \setminus \partial S$.
\end{lemma}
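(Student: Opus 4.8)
The plan is to rewrite $T \setminus \partial S$ in a form that makes the statement transparent. Since $\partial S = \overline{S} \setminus \Int(S)$ and $\Int(S) \subseteq S \subseteq \overline{S}$, we have
\[ T \setminus \partial S = \Int(S) \cup (T \setminus \overline{S}), \]
and this exhibits $T \setminus \partial S$ as the union of the two disjoint open subsets $\Int(S)$ and $T \setminus \overline{S}$ of $T$. Now fix a connected component $C$ of $\Int(S)$ (if $\Int(S) = \emptyset$ there is nothing to prove). Then $C$ is a nonempty connected subset of $T \setminus \partial S$, so it is contained in a unique connected component $D$ of $T \setminus \partial S$, and it suffices to prove $D = C$.

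The key step is to note that $D$, being connected and contained in the disjoint union of the open sets $\Int(S)$ and $T \setminus \overline{S}$, must lie entirely in one of them: the sets $D \cap \Int(S)$ and $D \cap (T \setminus \overline{S})$ are disjoint, open in $D$, and cover $D$, so by connectedness of $D$ one of them is empty. Since $\emptyset \neq C \subseteq D \cap \Int(S)$, it is the second one that is empty, i.e.\ $D \subseteq \Int(S)$. Hence $D$ is a connected subset of $\Int(S)$ containing the component $C$, and maximality of $C$ forces $D \subseteq C$, so $D = C$, as required.

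There is essentially no serious obstacle here; the only point that needs a little care is that, since $T$ is not assumed to be locally connected, the component $C$ of the open set $\Int(S)$ need not itself be open in $T$, so one cannot argue with $C$ directly and instead has to pass through the ambient component $D$ of $T \setminus \partial S$ as above.
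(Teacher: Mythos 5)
Your proof is correct and is essentially identical to the paper's: both decompose the ambient component of $T \setminus \partial S$ into its intersection with $\Int(S)$ and with $T \setminus \overline{S}$, use connectedness to show the latter piece is empty, and then invoke maximality of the component $C$ of $\Int(S)$. Only the notation ($D$ versus $C_0$) differs.
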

\begin{proof}
Suppose that $C$ is a connected component of $\Int (S)$. Since $\Int (S)$ is a subset of $T \setminus \partial S$, so is $C$. Let $C_0$ be the connected component of $T \setminus \partial S$ which contains $C$. To complete the proof it suffices to show that $C_0 \subseteq \Int (S)$. Consider the sets $C_1 := C_0 \setminus \overline{S}$, $C_2 := C_0 \cap \Int (S)$. They are disjoint, relatively open in $C_0$ and their union is equal to $C_0$. Since $C_0$ is connected and $C_2 \supseteq C_0 \cap C = C \neq \emptyset$, it follows that $C_1 = \emptyset$. Hence, $C_2 = C_0$, which shows that $C_0 \subseteq \Int (S)$.
\end{proof}

\begin{lemma}\label{insAndInt}
Let $X \subseteq \R^2$ be an absolute retract. Then
\[\big\lbrace G \subseteq \Int (X) \, ; \ G \textup{ is a connected component of } \Int (X) \big\rbrace\]
\[=\big\lbrace \ins (J) \, ; \ J \subseteq \partial X \textup{ is a Jordan curve} \big\rbrace .\]
\end{lemma}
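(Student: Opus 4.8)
The plan is to establish the two inclusions separately. The inclusion ``$\supseteq$'' is the easy one: starting from a Jordan curve $J \subseteq \partial X$, I would observe that $\R^2 \setminus X$ is connected (Lemma \ref{PlanarAR}), unbounded (since $X$ is compact), and disjoint from $J \subseteq X$, so it lies in a single component of $\R^2 \setminus J$ and, being unbounded, in $\out(J)$. Hence $\ins(J) \cap (\R^2 \setminus X) = \emptyset$, i.e.\ $\ins(J) \subseteq X$, and since $\ins(J)$ is open, $\ins(J) \subseteq \Int(X)$. Let $G$ be the component of $\Int(X)$ containing the connected set $\ins(J)$. As $G \subseteq \Int(X)$ is disjoint from $J \subseteq \partial X$, the connected set $G$ lies in $\ins(J)$ or in $\out(J)$, and since it meets $\ins(J)$ we get $G \subseteq \ins(J)$, so $G = \ins(J)$ is a component of $\Int(X)$.

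For the inclusion ``$\subseteq$'', let $G$ be a component of $\Int(X)$ and fix $a \in G$. By Lemma \ref{Komponenta}, $G$ is a component of $\R^2 \setminus \partial X$, and by Lemma \ref{BdOfARisLC} the set $\partial X$ is a locally connected continuum. Since $G$ and $\R^2 \setminus X$ are open, I would pick small closed disks $D \subseteq G$ with $a \in D$ and $D' \subseteq \R^2 \setminus X$; these are (nondegenerate) continua contained in $\R^2 \setminus \partial X$, and the component of $\R^2 \setminus \partial X$ containing $D$ is $G$, which does not contain $D'$ because $D' \cap X = \emptyset$. In particular $\R^2 \setminus \partial X$ is disconnected, so Lemma \ref{JordSeparace} applies to $C := \partial X$ together with $D$ and $D'$, and yields a Jordan curve $J \subseteq \partial X$ such that one of $D, D'$ lies in $\ins(J)$ and the other in $\out(J)$.

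To finish, I would argue exactly as in the first paragraph that $\R^2 \setminus X \subseteq \out(J)$; hence $D' \subseteq \out(J)$, so $D \subseteq \ins(J)$ and in particular $a \in \ins(J)$. Moreover $\ins(J) \subseteq X$ and is open, so $\ins(J) \subseteq \Int(X)$; being connected and containing $a$, it is contained in $G$. Conversely, $G$ is connected and disjoint from $J$, hence contained in $\ins(J)$ or $\out(J)$, and since $a \in G \cap \ins(J)$ we conclude $G \subseteq \ins(J)$. Thus $G = \ins(J)$ for the Jordan curve $J \subseteq \partial X$, as required.

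The argument uses no plane topology beyond the already quoted lemmas; the only point that needs care is the setup of Lemma \ref{JordSeparace}\,---\,choosing the auxiliary continua so that one sits inside $G$ and the other outside $X$, and then invoking the connectedness of $\R^2 \setminus X$ (Lemma \ref{PlanarAR}) to decide which complementary region of the resulting Jordan curve contains each of them. Using genuine (nondegenerate) continua rather than the point $a$ and an arbitrary point of $\R^2 \setminus X$ is a harmless precaution in case singletons are not admitted as continua in the statement of Lemma \ref{JordSeparace}. Once $J$ is in hand, the equality $G = \ins(J)$ is forced by the two opposite inclusions obtained from connectedness of $\ins(J)$ and of $G$.
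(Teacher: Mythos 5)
Your proof is correct, and the ``$\subseteq$'' direction takes a genuinely different and somewhat lighter route than the paper's. The paper applies Lemma~\ref{JordSeparace} with $C = \partial G$: it first invokes Lemma~\ref{Torhorst} to show that $\partial G$ is a locally connected continuum, uses Lemma~\ref{Komponenta} a second time to see that $G$ is a component of $\R^2\setminus\partial G$, finds a Jordan curve $J\subseteq\partial G$ separating a point of $G$ from a point of $\R^2\setminus X$, and finally observes $\partial G\subseteq\partial X$. You instead apply Lemma~\ref{JordSeparace} directly with $C=\partial X$, whose local connectedness is already supplied by Lemma~\ref{BdOfARisLC}, and you verify the separation hypothesis using the same Lemma~\ref{Komponenta} identification of $G$ as a component of $\R^2\setminus\partial X$. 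This bypasses Lemma~\ref{Torhorst} entirely, at the small cost that the resulting $J$ is only located inside $\partial X$ rather than inside the more refined set $\partial G$ (which is irrelevant here: your closing connectedness argument establishing the two inclusions $\ins(J)\subseteq G$ and $G\subseteq\ins(J)$ works just as well and is slightly more self-contained than the paper's appeal to ``both are components of $\Int(X)$''). In the ``$\supseteq$'' direction you argue directly that $\R^2\setminus X\subseteq\out(J)$ via connectedness and unboundedness, rather than the paper's proof by contradiction; these are essentially the same observation. Your caution about using genuine disks $D,D'$ instead of singletons in Lemma~\ref{JordSeparace} is, as you suspect, unnecessary here (the paper itself uses $A=\{a\}$, $B=\{b\}$), but it is harmless.
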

\begin{proof}
Let us start by showing that $\ins (J)$ is a subset of $\Int (X)$ for every Jordan curve $J$ contained in $\partial X$. Assume for contradiction that $J \subseteq \partial X$ is a Jordan curve such that $\ins (J) \nsubseteq \Int (X)$. Then, as $\ins (J)$ is open, we have $\ins (J) \nsubseteq X$. Hence, $(\R^2 \setminus X) \cap \ins (J) \neq \emptyset$. Moreover, since $X$ is compact (and hence bounded in $\R^2$), we have $(\R^2 \setminus X) \cap \out (J) \neq \emptyset$. In addition, the sets $\ins (J)$, $\out (J)$ are connected and (by Lemma~\ref{PlanarAR}) so is $\R^2 \setminus X$. Therefore, the set $\ins (J) \cup (\R^2 \setminus X) \cup \out (J)$ is connected as well. However, it is clear that this set is equal to $\R^2 \setminus J$. Thus, $\R^2 \setminus J$ is connected which contradicts the Jordan curve theorem.

Now let us prove that $\ins (J)$ is a connected component of $\Int (X)$ for every Jordan curve $J \subseteq \partial X$. Let $J$ be given. We know that $\ins (J)$ is a connected subset of $\Int (X)$. Let $U$ be the connected component of $\Int (X)$ containing $\ins (J)$, we need to show that $U \subseteq \ins (J)$. Assume that $U \setminus \ins (J) \neq \emptyset$. As $J \subseteq \partial X$ and $U \subseteq \Int (X)$, we have $U \cap J = \emptyset$, therefore $U \cap \out (J) \neq \emptyset$. It follows that the set $U \cup \out (J)$ is connected. Thus, since it is clear that $U \cup \out (J) = \R^2 \setminus J$, we see that $\R^2 \setminus J$ is connected, which contradicts the Jordan curve theorem.

It remains to prove that for every connected component $G$ of $\Int (X)$ there is a Jordan curve $J \subseteq \partial X$ such that $G = \ins (J)$. Let $G$ be given. By Lemma~\ref{BdOfARisLC}, the set $\partial X$ is a locally connected continuum. Hence, since $G$ is (by Lemma~\ref{Komponenta}) a connected component of $\R^2 \setminus \partial X$, it follows from Lemma~\ref{Torhorst} that $\partial G$ is a locally connected continuum. As a connected component of an open subset of Euclidean space, the set $G$ is open. Thus, since $G$ clearly is a connected component of itself, we conclude using Lemma~\ref{Komponenta} that $G$ is a connected component of $\R^2 \setminus \partial G$. Choose arbitrary points $a \in G$, $b \in \R^2 \setminus X$. Clearly, $b \notin \overline{G}$. Therefore, $b$ lies in $\R^2 \setminus \partial G$ and the connected component of $\R^2 \setminus \partial G$ containing $a$ does not contain $b$. Let $A := \{ a \}$, $B := \{ b \}$. Then $A$, $B$ are continua such that $A,B \subseteq \R^2 \setminus \partial G$ and the connected component of $\R^2 \setminus \partial G$ containing $A$ does not contain $B$. Thus, by Lemma~\ref{JordSeparace}, there is a Jordan curve $J \subseteq \partial G$ such that one of the sets $A$, $B$ is contained in $\ins (J)$ and the other one is contained in $\out (J)$. Clearly, $\partial G \subseteq \partial X$, hence $J \subseteq \partial X$. Therefore, by the previous paragraph, the set $\ins (J)$ is a connected component of $\Int (X)$. In particular, we have $\ins (J) \subseteq X$, hence $B \nsubseteq \ins (J)$. Thus, $A \subseteq \ins (J)$, which shows that $G \cap \ins (J) \neq \emptyset$. Hence, since both of the sets $G$, $\ins (J)$ are connected components of $\Int (X)$, we conclude that $G = \ins (J)$.
\end{proof}

The following lemma is an easy consequence of \cite[p. 515, Theorem 10]{Kuratowski2}. Before we state it, recall that an open subset of $\R^n$ has at most countably many connected components (as the components form a disjoint family of open sets and $\R^n$ is separable).

\begin{lemma}\label{DiamGoesToZero1}
Let $Z \subseteq \R^2$ be a locally connected compact set. Then the family
\[ \{ U \subseteq \R^2 \setminus Z \, ; \ \mathrm{diam} (U) \geq \varepsilon \, , \ U \textup{ is a connected component of } \R^2 \setminus Z \} \]
is finite for every $\varepsilon > 0$.
\end{lemma}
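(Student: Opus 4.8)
The plan is to deduce the lemma from \cite[p. 515, Theorem 10]{Kuratowski2}, to which it will reduce after two elementary normalizations of $Z$; everything past invoking that theorem is the sort of bookkeeping already anticipated by the remark preceding the lemma.

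First I would use that $Z$ is both compact and locally connected. In a locally connected space the components of every open set are open, so the components of $Z$ are clopen in $Z$, and by compactness there are only finitely many of them, say $Z_1,\dots,Z_k$, each either a nondegenerate locally connected continuum or a singleton. Since the singleton components of $Z$ are exactly its isolated points (this uses local connectedness), each of them lies in the closure of a single component of $\R^2\setminus Z$; hence removing these points from $Z$ changes the components of $\R^2\setminus Z$ only by finitely many points and leaves all their diameters unchanged. So we may assume $Z = Z_1\cup\dots\cup Z_k$ with every $Z_i$ a locally connected continuum.

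Second, as $Z$ is bounded I would fix $M$ with $Z\subseteq B(0,M)$; then $\R^2\setminus Z$ has exactly one unbounded component and every other component lies inside $\overline{B(0,M)}$. If the cited theorem is stated on the sphere rather than on the plane, I would then pass to $S^2 = \R^2\cup\{\infty\}$ with a fixed metric $\varrho$: the components of $S^2\setminus Z$ are precisely those of $\R^2\setminus Z$ with the unbounded one enlarged by $\infty$, and since $\overline{B(0,M)}$ is compact the Euclidean metric and $\varrho$ are uniformly equivalent on it, so for the prescribed $\varepsilon>0$ there is $\varepsilon'>0$ such that every subset of $\overline{B(0,M)}$ of Euclidean diameter at least $\varepsilon$ has $\varrho$-diameter at least $\varepsilon'$.

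Now \cite[p. 515, Theorem 10]{Kuratowski2}, applied to $Z$ in the relevant ambient space, gives that the components of the complement form a null family; since there are only countably many of them (as recorded just before the lemma), this says precisely that for every $\delta>0$ only finitely many have diameter at least $\delta$. Taking $\delta=\varepsilon'$ (or $\delta=\varepsilon$ if no passage to $S^2$ was needed) and adding back the single unbounded component completes the proof. I expect the one point that is not pure bookkeeping to be verifying that the cited theorem is being invoked in a form valid for a $Z$ that is neither connected nor a continuum: the normalizations above reduce $Z$ to a finite union of locally connected continua, and whether a short additional argument is then needed or the theorem applies verbatim depends on its exact formulation.
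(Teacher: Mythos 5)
Your proposal follows exactly the route the paper takes: it gives no argument beyond citing \cite[p.\ 515, Theorem 10]{Kuratowski2}, so you, too, are essentially invoking that theorem and supplying the normalizations that (depending on its exact formulation) may or may not be needed. Your normalizations are correct as far as they go: a locally connected compactum has finitely many components, each a Peano continuum or a singleton; a singleton component is an isolated point and its removal only absorbs that point into the unique complementary component that accumulates on it; and the passage to $S^2$ with a metric equivalence on a fixed ball is routine.

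The one place you should be careful is the step you yourself flag and leave open. If Theorem 10 is stated for a single locally connected \emph{continuum}, then passing from a finite disjoint union $Z_1 \cup \dots \cup Z_k$ of Peano continua back to a single one is not pure bookkeeping. The naive idea of joining the $Z_i$ by arcs fails as stated: an arc can pass through a bounded complementary component of $\R^2 \setminus Z$ and split it into infinitely many pieces, so the null-family conclusion for $\R^2 \setminus Z'$ (where $Z'$ is $Z$ together with the arcs) does not immediately transfer back to $\R^2 \setminus Z$. The correct fix is to route the connecting arcs entirely through the unbounded complementary component (e.g.\ by working in $S^2$ and drawing a spider of arcs from $\infty$ to accessible boundary points of each $Z_i$), so that the bounded complementary components of $Z$ are untouched; this in turn needs an accessibility argument for boundary points of the unbounded component. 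That is a genuine extra step, not a triviality, and your proof as written does not supply it. Since the paper itself declares the lemma an ``easy consequence'' without elaboration, I cannot tell you whether the authors regard Theorem 10 as already covering compacta; but you should either check that it does, or actually carry out the argument just sketched rather than leaving it as a remark.
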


By Lemmata \ref{JordUnique}, \ref{BdOfARisLC}, \ref{Komponenta}, \ref{insAndInt} and \ref{DiamGoesToZero1}, we receive the following:

\begin{lemma}\label{DiamGoesToZero2}
Let $X \subseteq \R^2$ be an absolute retract. Then the set
\[ \big\lbrace J \subseteq \partial X \, ; \ J \textup{ is a Jordan curve and } \mathrm{diam} \big( \ins (J) \big) \geq \varepsilon \big\rbrace \]
is finite for every $\varepsilon > 0$.
\end{lemma}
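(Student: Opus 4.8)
The plan is to exhibit a diameter-preserving bijection between the Jordan curves contained in $\partial X$ and the connected components of $\Int (X)$, and then to invoke the finiteness statement of Lemma \ref{DiamGoesToZero1} applied to the compactum $\partial X$. Fix $\varepsilon > 0$; we must show that only finitely many Jordan curves $J \subseteq \partial X$ satisfy $\mathrm{diam} \big( \ins (J) \big) \geq \varepsilon$.

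First I would record the bijection. By Lemma \ref{insAndInt}, the assignment $J \mapsto \ins (J)$ maps the family of all Jordan curves contained in $\partial X$ \emph{onto} the family of connected components of $\Int (X)$; in particular $\ins (J)$ is always such a component, so its diameter is literally the diameter of a component of $\Int (X)$. By Lemma \ref{JordUnique}, this assignment is injective: if $\ins (J_1) = \ins (J_2)$ for Jordan curves $J_1, J_2 \subseteq \partial X$, then $J_1 = J_2$. Hence $J \mapsto \ins (J)$ is a bijection from the set of Jordan curves in $\partial X$ onto the set of connected components of $\Int (X)$, and under this bijection the set in the statement corresponds exactly to $\big\lbrace G \, ; \ G \textup{ is a connected component of } \Int (X) \textup{ with } \mathrm{diam} (G) \geq \varepsilon \big\rbrace$. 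So it suffices to prove that this last set is finite.

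Next I would push the components of $\Int (X)$ into the complement of $\partial X$. By Lemma \ref{BdOfARisLC}, $\partial X$ is a locally connected continuum, so in particular it is a locally connected compact subset of $\R^2$. Applying Lemma \ref{Komponenta} with $T = \R^2$ and $S = X$ (noting $\Int (S) = \Int (X)$ and $T \setminus \partial S = \R^2 \setminus \partial X$), every connected component of $\Int (X)$ is a connected component of $\R^2 \setminus \partial X$. Consequently
\[ \big\lbrace G \, ; \ G \textup{ is a component of } \Int (X), \ \mathrm{diam} (G) \geq \varepsilon \big\rbrace \subseteq \big\lbrace U \subseteq \R^2 \setminus \partial X \, ; \ \mathrm{diam} (U) \geq \varepsilon, \ U \textup{ is a component of } \R^2 \setminus \partial X \big\rbrace . \]
Now Lemma \ref{DiamGoesToZero1}, applied with $Z = \partial X$, tells us that the right-hand family is finite, hence so is the left-hand one, and therefore so is the set in the statement. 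This completes the argument.

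I do not expect any genuine obstacle here: the lemma is a bookkeeping assembly of the five cited results. The only points requiring a little care are verifying that $\mathrm{diam} \big( \ins (J) \big)$ is exactly the diameter of the corresponding component (immediate from Lemma \ref{insAndInt}) and that the correspondence $J \mapsto \ins (J)$ is not merely onto but bijective, so that finiteness transfers in the correct direction (this is where Lemma \ref{JordUnique} is used).
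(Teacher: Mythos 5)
Your proof is correct and uses exactly the five lemmata (\ref{JordUnique}, \ref{BdOfARisLC}, \ref{Komponenta}, \ref{insAndInt}, \ref{DiamGoesToZero1}) that the paper cites as the ingredients, assembled in the same natural way. This matches the paper's intended argument.
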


\begin{lemma}\label{BorelMeas}
Let $X$ be a Polish space. Then the mapping from $\mathcal{K} (X)$ to $\mathcal{K} (X)$ given by $K \mapsto \partial K$ is Borel measurable.
\end{lemma}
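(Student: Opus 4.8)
The plan is to check Borel measurability on a subbasis of the Vietoris topology. Since the Borel $\sigma$-algebra of $\mathcal{K}(X)$ is generated by the subbasic open sets $\{L\in\mathcal{K}(X):L\subseteq V\}$ and $\{L\in\mathcal{K}(X):L\cap V\neq\emptyset\}$ (with $V$ open in $X$), and the preimage operation under $K\mapsto\partial K$ commutes with complements and countable unions, it suffices to prove that for every open $V\subseteq X$ the sets
\[ \mathcal{A}_V:=\{K\in\mathcal{K}(X):\partial K\subseteq V\},\qquad \mathcal{B}_V:=\{K\in\mathcal{K}(X):\partial K\cap V\neq\emptyset\} \]
are Borel in $\mathcal{K}(X)$. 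Fix once and for all a countable base $\{V_n\}_{n\in\N}$ of $X$ and a compatible metric $d$ on $X$.

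First I would treat $\mathcal{A}_V$, which is the easier half because it is at heart a compactness statement. Since $K$ is the disjoint union of $\Int K$ and $\partial K$, one reads off the equivalence $\partial K\subseteq V\iff K\setminus V\subseteq\Int K$. Now $K\setminus V$ is compact and $\Int K=\bigcup\{V_n:V_n\subseteq K\}$ (because $\{V_n\}$ is a base), so by compactness $K\setminus V\subseteq\Int K$ holds if and only if there is a finite set $F\subseteq\N$ with $V_n\subseteq K$ for every $n\in F$ and $K\subseteq V\cup\bigcup_{n\in F}V_n$. Consequently
\[ \mathcal{A}_V=\bigcup_{F\in[\N]^{<\omega}}\left(\ \bigcap_{n\in F}\{K:V_n\subseteq K\}\ \cap\ \Bigl\{K:K\subseteq V\cup\bigcup_{n\in F}V_n\Bigr\}\ \right). \]
Each set $\{K:K\subseteq W\}$ ($W$ open) is Vietoris-open, and each set $\{K:V_n\subseteq K\}$ is closed in $\mathcal{K}(X)$: if $K_i\to K$ in $\mathcal{K}(X)$ with $V_n\subseteq K_i$ for all $i$, then for any $x\in V_n$ with $x\notin K$ one separates $\{x\}$ from the compact set $K$ by an open set $U\supseteq K$ with $x\notin U$, whence $K_i\subseteq U$ for large $i$ and so $x\notin K_i$, contradicting $x\in V_n\subseteq K_i$. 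Thus $\mathcal{A}_V$ is a countable union of sets each of which is the intersection of a closed set and an open set, in particular Borel.

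Next I would reduce $\mathcal{B}_V$ to the previous case by complementation: $\mathcal{K}(X)\setminus\mathcal{B}_V=\{K:\partial K\subseteq X\setminus V\}$. The set $X\setminus V$ is closed in the metric space $X$, hence a countable intersection $X\setminus V=\bigcap_{k\in\N}U_k$ of open subsets of $X$ (take $U_k:=\{x\in X:d(x,X\setminus V)<1/k\}$ if $X\setminus V\neq\emptyset$, and $U_k:=\emptyset$ otherwise). Since $\partial K\subseteq X\setminus V$ is equivalent to $\partial K\subseteq U_k$ for every $k$, we obtain $\mathcal{K}(X)\setminus\mathcal{B}_V=\bigcap_{k\in\N}\mathcal{A}_{U_k}$, which is Borel by the first step; therefore $\mathcal{B}_V$ is Borel too, and the argument is complete.

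I expect the only genuinely non-routine point to be the asymmetry between the two kinds of subbasic sets, which mirrors the fact that $K\mapsto\partial K$ is not continuous — for example, the annuli $K_i:=\{x\in\R^2:1/i\le\|x\|\le 1\}$ converge in $\mathcal{K}(\R^2)$ to the full closed unit disk, yet their boundaries (pairs of concentric circles) converge to the unit circle together with its centre. The condition $\partial K\subseteq V$ withstands this because it amounts to covering the compact set $K\setminus V$ by the open set $\Int K$, whereas $\partial K\cap V\neq\emptyset$ has to be handled indirectly, by presenting the closed set $X\setminus V$ as a decreasing intersection of open sets and thereby rephrasing the ``bad'' condition as countably many instances of the ``good'' one.
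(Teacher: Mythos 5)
Your proof is correct, and it takes a genuinely different route from the paper's. The paper approximates the boundary operator by a sequence of maps $\Phi_n(K)=\overline{\partial_n K}$, where $\partial_n K$ is the set of points of $K$ within distance $2^{-n}$ of the complement, shows that each $\Phi_n$ is Borel measurable (by analyzing preimages of subbasic Vietoris sets via two auxiliary claims), and then invokes the fact that a pointwise limit of Borel maps into a metrizable space is Borel. You instead analyze the preimages of the boundary map itself: you rewrite $\partial K\subseteq V$ as the compactness statement $K\setminus V\subseteq\Int K$, decompose $\Int K$ over a countable base, and get $\mathcal{A}_V$ as an explicit $\Sigma^0_2$ set; then you dispose of $\mathcal{B}_V$ by writing the closed set $X\setminus V$ as a decreasing intersection of open sets so that $\mathcal{K}(X)\setminus\mathcal{B}_V=\bigcap_k\mathcal{A}_{U_k}$. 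The trick of representing a closed set as a countable intersection of open $\varepsilon$-neighborhoods appears in both arguments (it is the content of the paper's Claim concerning the $G_\delta$ nature of $\{K:\overline{\partial_n K}\cap F\neq\emptyset\}$), but you use it once at the top level rather than inside an approximation scheme. Your argument is more elementary in that it avoids the pointwise-limit machinery entirely, gives an explicit low-level Borel class for the preimages, and makes the asymmetry between the two types of subbasic sets transparent; the paper's argument is slightly more uniform in structure since both subbasic cases are handled for the same approximating maps. Both are valid, and the observation you close with — that $K\mapsto\partial K$ is not continuous, illustrated by the shrinking annuli — is exactly the right heuristic for why some such indirection is unavoidable.
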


\begin{proof}
Denote by $\Phi$ the mapping in question and let $\varrho$ be a metric on $X$ compatible with the topology of $X$. For every open set $G \subseteq X$ denote
\[ A_G := \big\lbrace L \in \mathcal{K} (X) \, ; \, L \subseteq G \big\rbrace \ , \ \ \ \ B_G := \big\lbrace L \in \mathcal{K} (X) \, ; \, L \cap G \neq \emptyset \big\rbrace . \]
By the definition of the Vietoris topology, the family
\[ \mathcal{S} := \{ A_G \, ; \, G \subseteq X \textup{ is open} \} \cup \{ B_G \, ; \, G \subseteq X \textup{ is open} \} \]
is a subbase of $\mathcal{K} (X)$.
For every $K \in \mathcal{K} (X)$ and $n \in \N$ denote
$${\partial}_n K := \big\lbrace x \in K \, ; \ \exists \, y \in X \setminus K : \varrho (x,y) < 2^{-n} \big\rbrace .$$
Clearly, we have
\begin{equation}\label{eq1}
    \forall K \in \mathcal{K} (X) \ \forall n \in \N : {\partial}_n K \supseteq \overline{{\partial}_{n+1} K}.
\end{equation}
Moreover, it is easy to see that
\begin{equation}\label{eq2}
    \forall K \in \mathcal{K} (X) : \bigcap_{n=1}^{\infty} {\partial}_n K = \bigcap_{n=1}^{\infty} \overline{{\partial}_n K} = \partial K .
\end{equation}
For every $n \in \N$ define a mapping ${\Phi}_n \colon \mathcal{K} (X) \to \mathcal{K} (X)$ by ${\Phi}_n (K) = \overline{{\partial}_n K}$. We will show that the sequence $({\Phi}_n)_{n=1}^{\infty}$ converges pointwise to $\Phi$. Let $K \in \mathcal{K} (X)$ be given. In order to prove that $\displaystyle \lim_{n \to \infty} {\Phi}_n (K) = \Phi (K)$ in $\mathcal{K} (X)$, it suffices to show that for every $U \in \mathcal{S}$ with $\partial K \in U$ there is $n_0 \in \N$ such that for every $n \in \N$ with $n \geq n_0$ we have $\overline{{\partial}_n K} \in U$. Let $U \in \mathcal{S}$ with $\partial K \in U$ be arbitrary. By the definition of $\mathcal{S}$, there is an open set $G \subseteq X$ such that $U = A_G$ or $U = B_G$. First assume that $U = A_G$. Since $\partial K \in U$, we have $\partial K \subseteq G$. Hence, thanks to \eqref{eq1} and \eqref{eq2}, standard compactness arguments show that there exists $n_0 \in \N$ such that $\overline{{\partial}_{n_0} K} \subseteq G$. Thus, by \eqref{eq1}, we have $\overline{{\partial}_n K} \subseteq G$ for every $n \in \N$ with $n \geq n_0$. In other words, $\overline{{\partial}_n K} \in A_G = U$ for every $n \in \N$ with $n \geq n_0$. Now let us consider the case that $U = B_G$. Since $\partial K \in U$, we have $\partial K \cap G \neq \emptyset$. Using this and \eqref{eq2} we immediately conclude that $\overline{{\partial}_n K} \cap G \neq \emptyset$ for every $n \in \N$. In other words, $\overline{{\partial}_n K} \in B_G = U$ for every $n \in \N$. Thus, we can take $n_0 = 1$ and we are done.

Having shown that $\Phi$ is the pointwise limit of the sequence $({\Phi}_n)_{n=1}^{\infty}$, it only remains to prove that each of the mappings ${\Phi}_n$, $n \in \N$, is Borel measurable. Let $n \in \N$ be given. Since $\mathcal{K} (X)$ is second-countable (as it is a Polish space), it suffices to show that ${\Phi}_n^{-1} (U)$ is Borel in $\mathcal{K} (X)$ for every $U \in \mathcal{S}$. To that end we will use the following two claims:

\begin{claim}\label{ClaimOpen}
For every open set $G \subseteq X$ the set $\{ K \in \mathcal{K} (X) \, ; \ {\partial}_n K \cap G \neq \emptyset \}$\\
is open in $\mathcal{K} (X)$.
\end{claim}

\begin{claimproof}
Let $G \subseteq X$ be an open set. Choose an arbitrary set $K \in \mathcal{K} (X)$ such that ${\partial}_n K \cap G \neq \emptyset$. We would like to find a neighbourhood $V$ of $K$ in $\mathcal{K} (X)$ such that every $L \in V$ satisfies ${\partial}_n L \cap G \neq \emptyset$. Fix a point $x \in {\partial}_n K \cap G$. By the definition of ${\partial}_n K$, there exists $y \in X \setminus K$ such that $\varrho (x,y) < 2^{-n}$. Let ${\varepsilon}_1 := 2^{-n}-\varrho (x,y)$. Since $G$ is open and $x \in G$, there is ${\varepsilon}_2 > 0$ such that every $z \in X$ with $\varrho(x,z) < {\varepsilon}_2$ belongs to $G$. Let $\varepsilon := \mathrm{min} \{ {\varepsilon}_1, {\varepsilon}_2 \}$ and denote $B := \{ z \in X ; \, \varrho (x,z) < \varepsilon \}$. Clearly, $B$ is an open subset of $G$. Define
\[ V := \big\lbrace L \in \mathcal{K} (X) \, ; \ L \subseteq X \setminus \{ y \} \, , \ L \cap B \neq \emptyset \big\rbrace . \]
By the definition of the Vietoris topology, $V$ is open in $\mathcal{K} (X)$. Moreover, as $y \notin K$ and $x \in {\partial}_n K \cap B \subseteq K \cap B$, the set $V$ is a neighbourhood of $K$. Given $L \in V$, let us show that ${\partial}_n L \cap G \neq \emptyset$. By the definition of $V$, there exists a point $z \in L \cap B$. We have $\varrho (x,z) < \varepsilon \leq {\varepsilon}_1 = 2^{-n}-\varrho (x,y)$, and thus $\varrho (z,y) < 2^{-n}$. Hence, as $z \in L$ and $y \in X \setminus L$, we find that $z \in {\partial}_n L$. Therefore, since $z \in B \subseteq G$, it follows that ${\partial}_n L \cap G \neq \emptyset$. \claimend
\end{claimproof}

\begin{claim}\label{ClaimClosed}
For every closed set $F \subseteq X$ the set $\{ K \in \mathcal{K} (X) \, ; \ \overline{{\partial}_n K} \cap F \neq \emptyset \}$\\
is $G_{\delta}$ in $\mathcal{K} (X)$.
\end{claim}

\begin{claimproof}
Let $F \subseteq X$ be a closed set. For every $k \in \N$ let
\[ G_k := \big\lbrace x \in X ; \ \exists \, y \in F : \varrho (x,y) < 2^{-k} \big\rbrace . \]
It is clear that each of the sets $G_k$, $k \in \N$, is an open superset of $F$. Let us show that
\begin{equation}\label{eq3}
   \{ K \in \mathcal{K} (X) \, ; \ \overline{{\partial}_n K} \cap F \neq \emptyset \} = \bigcap_{k=1}^{\infty} \{ K \in \mathcal{K} (X) \, ; \ \overline{{\partial}_n K} \cap G_k \neq \emptyset \} .
\end{equation}
It is clear that every $K \in \mathcal{K} (X)$ with $\overline{{\partial}_n K} \cap F \neq \emptyset$ satisfies $\overline{{\partial}_n K} \cap G_k \neq \emptyset$ for every $k \in \N$. Conversely, let $K \in \mathcal{K} (X)$ be a set such that $\overline{{\partial}_n K} \cap F = \emptyset$. As a closed subset of the compact set $K$, the set $\overline{{\partial}_n K}$ is compact. Hence, there is $\varepsilon > 0$ such that for every $x \in \overline{{\partial}_n K}$ and every $y \in X$ with $\varrho (x,y) < \varepsilon$ we have $y \notin F$. It follows that $\overline{{\partial}_n K} \cap G_k = \emptyset$ for every $k \in \N$ with $2^{-k} \leq \varepsilon$.

Having shown \eqref{eq3}, it suffices to prove that $\{ K \in \mathcal{K} (X) \, ; \ \overline{{\partial}_n K} \cap G_k \neq \emptyset \}$ is an open set for every $k \in \N$. However, since each of the sets $G_k$, $k \in \N$, is open, we clearly have
\[ \{ K \in \mathcal{K} (X) \, ; \ \overline{{\partial}_n K} \cap G_k \neq \emptyset \} = \{ K \in \mathcal{K} (X) \, ; \ {\partial}_n K \cap G_k \neq \emptyset \} \]
for every $k \in \N$. Thus, thanks to Claim \ref{ClaimOpen}, we are done.\claimend
\end{claimproof}
Now we are ready to finish the proof. Let $U \in \mathcal{S}$ be given. By the definition of $\mathcal{S}$, there is an open set $G \subseteq X$ such that $U = A_G$ or $U = B_G$. First assume that $U = A_G$. Then ${\Phi}_n^{-1} (U) = \{ K \in \mathcal{K} (X) \, ; \ \overline{{\partial}_n K} \subseteq G \}$ and it is clear that
\[ \mathcal{K} (X) \setminus \{ K \in \mathcal{K} (X) \, ; \ \overline{{\partial}_n K} \subseteq G \} = \{ K \in \mathcal{K} (X) \, ; \ \overline{{\partial}_n K} \cap (X \setminus G) \neq \emptyset \} . \]
Therefore, by Claim \ref{ClaimClosed} and the closedness of $X \setminus G$, the set $\mathcal{K} (X) \setminus {\Phi}_n^{-1} (U)$ is $G_{\delta}$. In particular, ${\Phi}_n^{-1} (U)$ is Borel. Finally, assume that $U = B_G$. Then
\[ {\Phi}_n^{-1} (U) = \{ K \in \mathcal{K} (X) \, ; \ \overline{{\partial}_n K} \cap G \neq \emptyset \} = \{ K \in \mathcal{K} (X) \, ; \ {\partial}_n K \cap G \neq \emptyset \} , \]
hence, by Claim \ref{ClaimOpen}, the set ${\Phi}_n^{-1} (U)$ is open. In particular, it is Borel.
\end{proof}

\begin{theorem}\label{thmAR2}
The homeomorphism equivalence relation on $\mathsf{AR} ( \R^2 )$ is Borel bireducible with the isomorphism equivalence relation of countable graphs.
\end{theorem}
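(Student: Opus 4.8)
The plan is to establish the two reductions separately. The substantial half is the reduction of the homeomorphism relation on $\mathsf{AR}(\R^2)$ to graph isomorphism; the opposite half will follow from the known classification of dendrites.

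For the substantial half, I would first prove a purely topological criterion: for $X, Y \in \mathsf{AR}(\R^2)$ one has $X \cong Y$ if and only if $\partial X \cong \partial Y$. The forward implication rests on the intrinsic description $\Int(X) = \{x \in X \, ; \ x \text{ has a neighbourhood in } X \text{ homeomorphic to } \R^2 \}$ — one inclusion is immediate and the reverse one is invariance of domain — so any homeomorphism $X \to Y$ carries $\Int(X)$ onto $\Int(Y)$ and hence $\partial X$ onto $\partial Y$. For the converse, given a homeomorphism $g \colon \partial X \to \partial Y$: by Lemma~\ref{insAndInt} the set $\Int(X)$ is the disjoint union of the open disks $\ins(J)$ over Jordan curves $J \subseteq \partial X$, and likewise for $Y$; since $g$ induces a bijection between the Jordan curves in $\partial X$ and those in $\partial Y$, I would extend each $g \restriction J$ to a homeomorphism $\overline{\ins(J)} \to \overline{\ins(g(J))}$ via Lemma~\ref{extension} and glue all of these together with $g$ into a bijection $h \colon X \to Y$. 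Continuity of $h$ is equivalent to closedness of its graph in $X \times Y$, which follows from Lemma~\ref{ClosednessOfUnion} once one observes that the graphs of the pieces have diameters tending to $0$ — this is exactly where Lemma~\ref{DiamGoesToZero2}, applied to both $X$ and $Y$, enters. As $X$ is compact and $Y$ is Hausdorff, $h$ is then a homeomorphism.

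With the criterion in hand, the reduction is simply $X \mapsto \partial X$: it is Borel by Lemma~\ref{BorelMeas}, its range consists of rim-finite continua by Lemma~\ref{rim-finite}, and by the criterion it is a reduction from the homeomorphism relation on $\mathsf{AR}(\R^2)$ to the homeomorphism relation on rim-finite continua. Composing with the reduction of Krupski and Vejnar from the latter to $E_{S_\infty}$, and then with a reduction of $E_{S_\infty}$ to the isomorphism relation of countable graphs, completes this direction. For the opposite reduction I would invoke the result of Camerlo, Darji and Marcone that graph isomorphism is Borel reducible to the homeomorphism relation of dendrites, together with the facts that every dendrite embeds in the plane and that a planar dendrite — being a locally connected continuum that contains no Jordan curve and therefore does not separate $\R^2$ — is an absolute retract by Lemma~\ref{PlanarAR}; thus (a Borel planar realization of) that reduction already takes values in $\mathsf{AR}(\R^2)$.

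The main obstacle is the continuity of the glued map $h$ in the converse half of the criterion: one is gluing infinitely many independently chosen Schoenflies extensions along $\partial X$, and this works only because the filled-in disks shrink (Lemma~\ref{DiamGoesToZero2}), which is precisely the hypothesis under which the closed-union criterion of Lemma~\ref{ClosednessOfUnion} applies to the graph of $h$. A lesser point requiring care is that the Camerlo–Darji–Marcone reduction — if necessary composed with a Borel selection of planar embeddings of dendrites — can be arranged so as to output subsets of $\R^2$.
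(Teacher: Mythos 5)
Your proposal matches the paper's proof in all structural respects: the criterion $X_1 \cong X_2 \iff \partial X_1 \cong \partial X_2$ (via invariance of domain in one direction and gluing of Jordan--Schoenflies extensions over the components of the interior in the other), the Borel reduction $X \mapsto \partial X$ into rim-finite continua landing in Krupski--Vejnar, and the reverse direction via Camerlo--Darji--Marcone together with planarity and the AR property of dendrites. The one place where you genuinely diverge is the verification that the glued bijection $h$ is continuous: the paper carries out a direct $\varepsilon$--$\delta$ estimate at boundary points of $X$, isolating finitely many ``large'' Jordan curves $M = M_1 \cup \{J : h(J) \in M_2\}$ (controlled by Lemma~\ref{DiamGoesToZero2} in both copies) and treating those passing through $x_0$ by the continuity of the finitely many Schoenflies extensions; you instead observe that the graph of $h$ is the union of $\Gamma(g)$ with the graphs $\Gamma_J$ of the extensions over $\overline{\ins(J)}$, that $\overline{\Gamma_J}\setminus \Gamma_J \subseteq \Gamma(g)$, that $\mathrm{diam}\,\Gamma_J \to 0$ again by Lemma~\ref{DiamGoesToZero2}, so Lemma~\ref{ClosednessOfUnion} gives closedness of the graph, and the closed-graph criterion (range compact Hausdorff) gives continuity. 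Your route is shorter and reuses Lemma~\ref{ClosednessOfUnion} that the paper already proves for other purposes, whereas the paper's argument is more hands-on but self-contained within elementary metric estimates; both are correct and rely on exactly the same shrinking-diameter input.
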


\begin{proof}
By \cite[Theorem 6.7]{CamerloDarjiMarcone}, the homeomorphism ER of dendrites is Borel bireducible with the isomorphism ER of countable graphs.  Moreover, by a careful reading of \cite[Lemma 6.6]{CamerloDarjiMarcone} and its proof we conclude that the homeomorphism ER of dendrites in $[0,1]^2$ is Borel bireducible with the isomorphism ER of countable graphs (in fact, by \cite[paragraph 10.37]{Nadler}, every dendrite can be embedded into the plane). 
By \cite[p. 344, Theorem 16]{Kuratowski2}, every dendrite is an absolute retract.
Therefore, the isomorphism ER of countable graphs is Borel reducible to the homeomorphism ER on $\mathsf{AR} ( \R^2 )$.

To prove that the homeomorphism ER on $\mathsf{AR} ( \R^2 )$ is Borel reducible to the isomorphism ER of countable graphs, it suffices to show – thanks to \cite{KrupskiVejnar} – that the homeomorphism ER on $\mathsf{AR} ( \R^2 )$ is Borel reducible to the homeomorphism ER of rim-finite continua. Hence, it suffices to find a Borel measurable mapping $\Phi \colon \mathsf{AR} ( \R^2 ) \to \mathcal{K} (\R^2)$ such that $\Phi (X)$ is a rim-finite continuum for every $X \in \mathsf{AR} ( \R^2 )$ and $\Phi$ is a reduction from the homeomorphism ER on $\mathsf{AR} ( \R^2 )$ to the homeomorphism ER on $\mathcal{K} (\R^2)$. Define this desired mapping $\Phi$ by $\Phi (X) = \partial X$, $X \in \mathsf{AR} ( \R^2 )$. By Lemma \ref{BorelMeas}, we have that $\Phi$ is Borel measurable. Moreover, by Lemma \ref{rim-finite}, the set $\Phi (X)$ is a rim-finite continuum for every $X \in \mathsf{AR} ( \R^2 )$. Thus, it only remains to show that $\Phi$ is a reduction.

Suppose that $X_1 , X_2 \in \mathsf{AR} ( \R^2 )$ are homeomorphic and let $f \colon X_1 \to X_2$ be an onto homeomorphism. In order to show that $\Phi (X_1)$ is homeomorphic to $\Phi (X_2)$ it suffices to prove that $f (\partial X_1) = \partial X_2$. By the domain invariance theorem, the sets $f \big( \Int (X_1) \big)$ and $f^{-1} \big( \Int (X_2) \big)$ are open in $\R^2$. It follows that $f \big( \Int (X_1) \big) \subseteq \Int (X_2)$ and $f^{-1} \big( \Int (X_2) \big) \subseteq \Int (X_1)$. Therefore, $f \big( \Int (X_1) \big) = \Int (X_2)$, which implies that $f (\partial X_1) = \partial X_2$.

Conversely, assume we are given sets $X_1 , X_2 \in \mathsf{AR} ( \R^2 )$ such that $\Phi (X_1)$ is homeomorphic to $\Phi (X_2)$. Let $h \colon \partial X_1 \to \partial X_2$ be an onto homeomorphism. For every $i \in \{ 1 , 2 \}$ denote by ${\Gamma}_i$ the set of all Jordan curves contained in $\partial X_i$. Since $h$ is an onto homeomorphism, it is clear that the mapping $J \mapsto h(J)$, $J \in {\Gamma}_1$, is a bijection between ${\Gamma}_1$ and ${\Gamma}_2$. Moreover, trivially, for every $J \in {\Gamma}_1$ the mapping $h\restriction_J$ is a homeomorphism between the Jordan curves $J$ and $h(J)$. Hence, by Lemma \ref{extension}, for every $J \in {\Gamma}_1$ there is an onto homeomorphism $g_J \colon J \cup \ins (J) \to h(J) \cup \ins \big( h(J) \big)$ such that $g_J \restriction_J = h\restriction_J$. By Lemma \ref{insAndInt}, the set $\ins (J)$ is a subset of $\Int (X_i)$ for every $J \in {\Gamma}_i$ and every $i \in \{ 1 , 2 \}$. Moreover, by Lemmata \ref{JordUnique} and \ref{insAndInt}, for every $i \in \{ 1 , 2 \}$ and every $x \in \Int (X_i)$ there is a unique Jordan curve $J \in {\Gamma}_i$ such that $x \in \ins (J)$. It follows from this and from the properties of the mappings $h$ and $g_J$, $J \in {\Gamma}_1$, that the mapping $f \colon X_1 \to X_2$ given by
\begin{equation*}
    f(x) = \begin{cases}
    h(x) & \textup{if } x \in \partial X_1
    \\
    g_J (x) & \textup{if } x \in \ins (J) \, , \, J \in {\Gamma}_1
\end{cases} \ \ 
\end{equation*}
is a well-defined bijection. Let us show that $f$ is a homeomorphism. Since $X_1$ is compact and $f$ is injective, it suffices to show that $f$ is continuous. Let $x_0 \in X_1$ be given, we would like to prove that $f$ is continuous at $x_0$.

First, assume that $x_0 \in \Int (X_1)$. Then there is a unique Jordan curve $J \in {\Gamma}_1$ with $x_0 \in \ins (J)$. By the definition of $f$, we have $f \restriction_{\ins (J)} = g_J \restriction_{\ins (J)}$. Hence, as $\ins (J)$ is open and $g_J$ is continuous, the mapping $f$ is continuous at every point $x \in \ins (J)$. In particular, it is continuous at $x_0$.

Now, suppose that $x_0 \in \partial X_1$ and let $\varepsilon > 0$ be given. By the continuity of $h$, there is ${\delta}_1 > 0$ such that $\norm{h(x)-h(x_0)} < \frac{1}{2} \varepsilon$ for every $x \in \partial X_1$ with $\norm{x-x_0} < 2 {\delta}_1$. Consider the following sets:
\begin{align*}
    M_1 &:= \Big\lbrace J \in {\Gamma}_1 \, ; \ \mathrm{diam} \big( \ins (J) \big) \geq {\delta}_1 \Big\rbrace ,\\
    M_2 &:= \Big\lbrace J \in {\Gamma}_2 \, ; \ \mathrm{diam} \big( \ins (J) \big) \geq \frac{\varepsilon}{2} \Big\rbrace ,\\
    M &:= M_1 \cup \big\lbrace J \in {\Gamma}_1 \, ; \ h(J) \in M_2 \big\rbrace ,\\
    \widetilde{M} &:= \lbrace J \in M \, ; \ x_0 \in J \rbrace .
\end{align*}
By Lemma \ref{DiamGoesToZero2}, the sets $M_1$, $M_2$ are finite. Moreover, since the mapping $J \mapsto h(J)$, $J \in {\Gamma}_1$, is a bijection between ${\Gamma}_1$ and ${\Gamma}_2$, the set $M$ is finite as well (and so are the sets $\widetilde{M}$ and $M \setminus \widetilde{M}$). For every Jordan curve $J \in \widetilde{M}$ there exists (since $g_J$ is continuous) a number $\delta (J)>0$ such that every point $x \in J \cup \ins (J)$ with $\norm{x-x_0} < \delta (J)$ satisfies $\norm{g_J (x) - g_J (x_0)} < \varepsilon$. Define ${\delta}_2 := \mathrm{min} \big( \{ 1 \} \cup \{ \delta (J) \, ; \, J \in \widetilde{M} \} \big)$. By the definition of the set $\widetilde{M}$, it is obvious that $x_0 \notin J$ for every $J \in M \setminus \widetilde{M}$. In addition, as $x_0$ lies in $\partial X_1$ and $\ins (J) \subseteq \Int (X_1)$ for every $J \in {\Gamma}_1$, it follows that $x_0 \in \out (J)$ for every $J \in M \setminus \widetilde{M}$. As the set $M \setminus \widetilde{M}$ is finite and the set $\out (J)$ is open for every $J \in M \setminus \widetilde{M}$, there exists ${\delta}_3 > 0$ such that every $x \in \R^2$ with $\norm{x-x_0} < {\delta}_3$ lies in $\out (J)$ for every $J \in M \setminus \widetilde{M}$. Let $\delta := \mathrm{min} \{ {\delta}_1 , {\delta}_2 , {\delta}_3 \}$. We claim that $\norm{f(x)-f(x_0)} < \varepsilon$ for every $x \in X_1$ satisfying $\norm{x-x_0} < \delta$. Let $x \in X_1$ with $\norm{x-x_0} < \delta$ be given. If $x \in \partial X_1$, we have
\[ \norm{f(x)-f(x_0)} = \norm{h(x)-h(x_0)} < \tfrac{1}{2} \varepsilon < \varepsilon , \]
as $\norm{x-x_0} < \delta \leq {\delta}_1 < 2 {\delta}_1$. Therefore, we can assume that $x \in \Int (X_1)$, otherwise we are done. Let $J \in {\Gamma}_1$ be the (unique) Jordan curve satisfying $x \in \ins (J)$. Since $\norm{x-x_0} < \delta \leq {\delta}_3$, it is clear that $J \notin M \setminus \widetilde{M}$. Hence, $J \in \widetilde{M} \cup ({\Gamma}_1 \setminus M)$. First, assume that $J \in \widetilde{M}$. By the definition of the mapping $f$ and by the fact that $g_J \restriction_J = h\restriction_J$, we have $f(x) = g_J (x)$ and $f(x_0) = h(x_0) = g_J (x_0)$. Therefore, as $\norm{x-x_0} < \delta$ and $\delta \leq {\delta}_2 \leq \delta (J)$, we conclude that $\norm{f(x)-f(x_0)} = \norm{g_J (x) - g_J (x_0)} < \varepsilon$. Finally, consider the case that $J \in {\Gamma}_1 \setminus M$. By the definitions of the sets $M$, $M_1$ and $M_2$, we have $\mathrm{diam} \big( \ins (J) \big) < {\delta}_1$ and $\mathrm{diam} \big( \ins (h(J)) \big) < \frac{1}{2} \varepsilon$. Choose an arbitrary point $y \in J$. Then we have
\[ \norm{y-x} \leq \mathrm{diam} \big( J \cup \ins (J) \big) = \mathrm{diam} \big( \overline{\ins (J)} \big) = \mathrm{diam} \big( \ins (J) \big) < {\delta}_1 . \]
It follows that $\norm{y-x_0} \leq \norm{y-x} + \norm{x-x_0} < {\delta}_1 + \delta \leq 2 {\delta}_1$, therefore $\norm{h(y)-h(x_0)} < \frac{1}{2} \varepsilon$. Consequently, as $f(y)=h(y)$ and $f(x_0)=h(x_0)$, we obtain $\norm{f(y)-f(x_0)} < \frac{1}{2} \varepsilon$. Moreover, we also have
\begin{align*}
    \norm{f(x)-f(y)} &= \norm{g_J (x) - h(y)} = \norm{g_J (x) - g_J (y)}\\
    &\leq \mathrm{diam} \Big( g_J \big( J \cup \ins (J) \big)  \Big) = \mathrm{diam} \Big( h(J) \cup \ins \big( h(J) \big) \Big)\\
    &\leq \mathrm{diam} \Big( \overline{\ins \big( h(J) \big)} \Big) = \mathrm{diam} \Big( \ins \big( h(J) \big) \Big) < \tfrac{1}{2} \varepsilon .
\end{align*}
Thus, $\norm{f(x)-f(x_0)} \leq \norm{f(x)-f(y)} + \norm{f(y)-f(x_0)} < \frac{1}{2} \varepsilon + \frac{1}{2} \varepsilon = \varepsilon$.
\end{proof}

We devote the rest of this chapter to proving that both the homeomorphism and ambient homeomorphism ERs on $\mathcal{K} \big( [0,1]^n \big)$ are Borel reducible to the homeomorphism ER on ${\mathsf{C}}_n \big( [0,1]^{n+1} \big)$ for every $n \in \N$. In particular, using the notation introduced earlier, we show that $H_n \, {\leq}_B \, C_{n+1}$ and $R_n \, {\leq}_B \, C_{n+1}$ for every $n \in \N$. This result together with \cite[Theorem 3.9]{ChangGaoDim} and \cite[Theorem 4.2]{ChangGaoDim} give rise to the following diagram, where each arrow denotes Borel reducibility.

\[
\begin{tikzcd} 
H_1 \arrow[dr]\arrow[r]  & H_2 \arrow[r]\arrow[dr] & H_3 \arrow[r]\arrow[dr] & \cdots
\\
E_{S_\infty} \arrow[u, leftrightarrow] \arrow[r] \arrow[d, leftrightarrow] & C_2 \arrow[r]\arrow[u] & C_3 \arrow[r]\arrow[u] & \cdots
\\
R_1   \arrow[ur]\arrow[r] & R_2\arrow[r] \arrow[ur] & R_3\arrow[r] \arrow[ur] & \cdots
\end{tikzcd}
\]

A topological space $X$ is called \emph{strongly locally homogeneous} ($\mathsf{SLH}$) if for every point $x \in X$ there is an arbitrarily small neighborhood $U$ of $x$ such that for every $y\in U$ there exists an onto homeomorphism $h \colon X \to X$ satisfying $h(x) = y$ and $h \restriction_{X \setminus U} = \mathrm{id}_{X \setminus U}$. A separable topological space $X$ is called \emph{countable dense homogeneous} ($\mathsf{CDH}$) if for every two countable dense sets $C, D\subseteq X$ there exists an onto homeomorphism $h\colon X\to X$ such that $h(C)=D$. A separable topological space $X$ is said to be \emph{strongly countable dense homogeneous} ($\mathsf{SCDH}$) if for every open cover $\mathcal U$ of $X$ and every two countable dense sets $C, D\subseteq X$ there is an onto homeomorphism $h\colon X\to X$ such that $h(C)=D$ and $h$ is limited by $\mathcal U$, i.e. for every point $x \in X$ the set $\{x, h(x)\}$ is covered by an element of $\mathcal U$.

It is a well known fact \cite[Theorem 1.6.9]{vanMill} that every  Polish locally compact $\mathsf{SLH}$ space is $\mathsf{CDH}$.
In fact, as noted e.g. in \cite{AndersonCurtisvanMill}, every Polish locally compact $\mathsf{SLH}$ space is $\mathsf{SCDH}$
(it follows e.g. by a careful reading of the proof of \cite[Theorem 1.6.9]{vanMill}.)
Since every open subset of $\R^n$ is $\mathsf{SLH}$ we deduce that it is also $\mathsf{SCDH}$. 

\begin{lemma}\label{SCDH}
Let $n \in \N$. Assume that $G \subseteq \R^n$ is a nonempty open set and $D_1 , D_2$ are countable dense subsets of $G$. Then there exists an onto homeomorphism $h \colon \R^n \to \R^n$ such that $h(D_1)=D_2$ and $h(x)=x$ for every $x \in \R^n \setminus G$.
\end{lemma}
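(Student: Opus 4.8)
The plan is to reduce the statement to the fact, recalled just before the lemma, that every open subset of $\R^n$ is $\mathsf{SCDH}$ (note that $G$, being open in $\R^n$, is Polish, locally compact and $\mathsf{SLH}$, so this applies). If $G = \R^n$ there is nothing to impose on the complement and, since $\R^n$ is $\mathsf{CDH}$, one immediately obtains an onto homeomorphism $h \colon \R^n \to \R^n$ with $h(D_1)=D_2$. So assume from now on that $F := \R^n \setminus G$ is nonempty; it is closed, and the function $d(x) := \mathrm{dist}(x,F)$ is $1$-Lipschitz and positive exactly on $G$.

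The main idea is to apply $\mathsf{SCDH}$ on $G$ with an open cover whose members shrink as one approaches $F$, so that the resulting self-homeomorphism of $G$ extends to the identity on $F$. Concretely, for $x \in G$ set $B_x := \{ y \in \R^n \, ; \ \norm{y-x} < \tfrac13 d(x) \}$; then $B_x \subseteq G$, and $\mathcal{U} := \{ B_x \, ; \ x \in G \}$ is an open cover of $G$. The estimate I would establish is: if $g \colon G \to G$ is limited by $\mathcal{U}$, then $\norm{g(x)-x} < d(x)$ for every $x \in G$. Indeed, if $x$ and $g(x)$ both lie in some $B_{x'}$, then from $\norm{x-x'} < \tfrac13 d(x')$ and the Lipschitz property one gets $\norm{x-x'} < \tfrac12 d(x)$ and $d(x') < \tfrac32 d(x)$, whence $\norm{g(x)-x} \le \norm{g(x)-x'} + \norm{x'-x} < \tfrac13 d(x') + \tfrac12 d(x) < d(x)$. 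Applying $\mathsf{SCDH}$ of $G$ to the cover $\mathcal{U}$ and the countable dense sets $D_1, D_2$ yields an onto homeomorphism $h_0 \colon G \to G$ with $h_0(D_1)=D_2$ which is limited by $\mathcal{U}$; since being limited by $\mathcal{U}$ is symmetric in $h_0$ and $h_0^{-1}$, the displayed estimate applies to both $h_0$ and $h_0^{-1}$.

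Finally I would define $h \colon \R^n \to \R^n$ by $h = h_0$ on $G$ and $h = \mathrm{id}$ on $F$, and verify it is as required. It is a bijection, since $G$ and $F$ partition $\R^n$ and $h_0$ is a bijection of $G$. Continuity at a point of $G$ is clear, as $G$ is open and $h_0$ is continuous. For $x_0 \in F$: if $x \in G$ then $d(x) \le \norm{x-x_0}$, so $\norm{h(x)-x_0} \le \norm{h_0(x)-x} + \norm{x-x_0} < 2\norm{x-x_0}$, while $\norm{h(x)-x_0} = \norm{x-x_0}$ for $x \in F$; hence $h$ is continuous at $x_0$. The same argument with $h_0^{-1}$ in place of $h_0$ shows $h^{-1}$ is continuous, so $h$ is an onto homeomorphism, and by construction $h(D_1)=D_2$ and $h$ fixes $\R^n \setminus G$ pointwise.

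The only genuinely nontrivial point is the choice of the cover $\mathcal{U}$ together with the verification that homeomorphisms limited by it are forced to collapse to the identity on $F$; once the Lipschitz estimate above is in hand, the bijectivity and the two continuity checks at boundary points are entirely routine.
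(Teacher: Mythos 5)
Your proof is correct and follows essentially the same strategy as the paper's: reduce to the trivial case $G=\R^n$, otherwise apply $\mathsf{SCDH}$ of $G$ to a cover by balls whose radii shrink proportionally to $\mathrm{dist}(\,\cdot\,,\R^n\setminus G)$, and extend by the identity, using a Lipschitz estimate to verify continuity of $h$ and $h^{-1}$ at the closed complement. The only differences are cosmetic constants (you use radius $\tfrac13 d(x)$ and derive $\norm{h(x)-x_0}<2\norm{x-x_0}$; the paper uses radius $\tfrac12 d(x)$ and derives a bound with factor $3$), and your explicit remark that being limited by $\mathcal{U}$ is symmetric under inversion tidies up the treatment of $h^{-1}$.
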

\begin{proof}
If $G = \R^n$, we are done, as $\R^n$ is $\mathsf{CDH}$. Hence, assume that $G \neq \R^n$. For every $x \in G$ let
\[ U(x) := \Big\lbrace y \in \R^n \, ; \ \lVert x-y \rVert < \frac{1}{2} \mathrm{dist} (x , \, \R^n \setminus G) \Big\rbrace . \]
Clearly, the family $\mathcal{U} := \big\lbrace U(x) \, ; \, x \in G \big\rbrace$ is an open cover of $G$. As an open subset of $\R^n$, the set $G$ is $\mathsf{SCDH}$. Hence, there is an onto homeomorphism $f \colon G \to G$ such that $f(D_1) = D_2$ and $f$ is limited by $\mathcal{U}$. Define the desired mapping $h \colon \R^n \to \R^n$ by
\begin{equation*}
    h(x) = \begin{cases}
    f(x) & \textup{if } x \in G\\
    x & \textup{if } x \in \R^n \setminus G.
            \end{cases}
\end{equation*}
Clearly, $h(D_1)=D_2$ and $h(x)=x$ for every $x \in \R^n \setminus G$. It is also clear that $h$ is a bijection and both of the mappings $h$, $h^{-1}$ are continuous at each point of the set $\R^n \setminus \partial G$. It remains to prove that they are continuous also at every point of the set $\partial G$.
\begin{claim}
For every $z \in \partial G$ and every $y \in G$ we have $\lVert z-h(y) \rVert < 3 \lVert z-y \rVert$ and $\big\lVert z-h^{-1}(y) \big\rVert < 3 \lVert z-y \rVert$.
\end{claim}
\begin{claimproof}
Let $z \in \partial G$ and $y \in G$ be arbitrary. By the definition of $h$, we have $h(y)=f(y)$, $h^{-1}(y)=f^{-1}(y) \in G$ and $f \big( h^{-1}(y) \big) = y$. Hence, since $f$ is limited by $\mathcal{U}$, there exist points $x_1 , x_2 \in G$ such that $\{ y , h(y) \} \subseteq U(x_1)$ and $\{ y , h^{-1}(y) \} \subseteq U(x_2)$. It follows that
\begin{align*}
    \lVert x_1-y \rVert &< \frac{1}{2} \mathrm{dist} (x_1 , \, \R^n \setminus G), & \lVert x_1-h(y) \rVert &< \frac{1}{2} \mathrm{dist} (x_1 , \, \R^n \setminus G),\\
    \lVert x_2-y \rVert &< \frac{1}{2} \mathrm{dist} (x_2 , \, \R^n \setminus G), & \lVert x_2-h^{-1}(y) \rVert &< \frac{1}{2} \mathrm{dist} (x_2 , \, \R^n \setminus G).
\end{align*}
Moreover, for every $i \in \{ 1,2 \}$ we have
\[ \mathrm{dist} (x_i , \, \R^n \setminus G) \leq \lVert x_i - z \rVert \leq \lVert x_i - y \rVert + \lVert y - z \rVert < \frac{1}{2} \mathrm{dist} (x_i , \, \R^n \setminus G) + \lVert y - z \rVert , \]
which implies that $\frac{1}{2} \mathrm{dist} (x_i , \, \R^n \setminus G) < \lVert y - z \rVert$. Therefore,
\begin{align*}
    \lVert z-h(y) \rVert &\leq \lVert z-y \rVert + \lVert y-x_1 \rVert + \lVert x_1-h(y) \rVert\\
    &\leq \lVert z-y \rVert + \frac{1}{2} \mathrm{dist} (x_1 , \, \R^n \setminus G) + \frac{1}{2} \mathrm{dist} (x_1 , \, \R^n \setminus G)\\
    &< \lVert z-y \rVert + \lVert y - z \rVert + \lVert y - z \rVert = 3 \lVert z-y \rVert
\end{align*}
and
\begin{align*}
    \lVert z-h^{-1}(y) \rVert &\leq \lVert z-y \rVert + \lVert y-x_2 \rVert + \lVert x_2-h^{-1}(y) \rVert\\
    &\leq \lVert z-y \rVert + \frac{1}{2} \mathrm{dist} (x_2 , \, \R^n \setminus G) + \frac{1}{2} \mathrm{dist} (x_2 , \, \R^n \setminus G)\\
    &< \lVert z-y \rVert + \lVert y - z \rVert + \lVert y - z \rVert = 3 \lVert z-y \rVert .
\end{align*}\claimend
\end{claimproof}
Now we are ready to finish the proof. Let $z \in \partial G$ and $\varepsilon > 0$ be given and define $\delta := \frac{1}{3} \varepsilon$. We claim that $\lVert h(z) - h(y) \rVert < \varepsilon$ and $\big\lVert h^{-1}(z) - h^{-1}(y) \big\rVert < \varepsilon$ for every $y \in \R^n$ with $\lVert z-y \rVert < \delta$. Let $y \in \R^n$ with $\lVert z-y \rVert < \delta$ be arbitrary. If $y \notin G$, we immediately receive
\[ \lVert h(z) - h(y) \rVert = \lVert z - y \rVert < \delta = \frac{\varepsilon}{3} < \varepsilon \]
and
\[ \big\lVert h^{-1}(z) - h^{-1}(y) \big\rVert = \lVert z - y \rVert < \delta = \frac{\varepsilon}{3} < \varepsilon . \]
On the other hand, if $y \in G$, we have
\[ \lVert h(z) - h(y) \rVert = \lVert z - h(y) \rVert < 3 \lVert z - y \rVert < 3 \delta = \varepsilon \]
and
\[ \big\lVert h(z) - h^{-1}(y) \big\rVert = \big\lVert z - h^{-1}(y) \big\rVert < 3 \lVert z - y \rVert < 3 \delta = \varepsilon . \]
\end{proof}

Proposition \ref{uniformization}, which follows, is 
a simple enhancement of the Arsenin-Kunugui selection theorem \cite[Theorem 35.46]{Kechris}.

\begin{proposition}\label{uniformization}
Let $X$ be a standard Borel space, $Y$ a Polish space and $B\subseteq X\times Y$ a Borel set whose nonempty vertical sections are infinite and $\sigma$-compact. Then $\pi_X(B)$ is Borel and there exist Borel measurable mappings $f_k\colon \pi_X(B)\to Y$, $k \in \N$, such that:
\begin{enumerate}[label=(\roman*),font=\textup]
    \item $\forall k \in \N \ \forall x \in \pi_X(B) : \big( x , f_k (x) \big) \in B ;$
    \item $\forall x\in \pi_X(B) : \{f_k(x) \, ; \, k\in\N\} \textup{ is dense in the vertical section } B_x ;$
    \item $\forall x\in \pi_X(B) : \textup{the sequence } \big( f_k(x) \big)_{k=1}^{\infty} \textup{ is injective} .$
\end{enumerate}
\end{proposition}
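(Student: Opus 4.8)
The plan is to construct the maps $f_k$ by recursion on $k$, invoking at each step the Arsenin-Kunugui theorem in two guises: that the projection of a Borel set with $\sigma$-compact vertical sections is Borel, and that such a set admits a Borel uniformization. The hypothesis that the sections of $B$ are $\sigma$-compact is exactly what makes the theorem applicable, while the hypothesis that they are infinite is what will let the recursion proceed without ever getting stuck when it must avoid finitely many previously chosen values.

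First, I would fix a countable base $(V_m)_{m\in\N}$ of $Y$ consisting of nonempty open sets, and record one elementary fact to be used repeatedly: if $g_1,\dots,g_r\colon\pi_X(B)\to Y$ are Borel maps (possibly $r=0$), then the graph of each $g_j$ is a Borel subset of $X\times Y$, so for every open $W\subseteq Y$ the set
\[ E:=\big\{(x,y)\in B\,;\ y\in W,\ y\notin\{g_1(x),\dots,g_r(x)\}\big\} \]
is Borel; and its vertical section at $x$ is $(B_x\cap W)\setminus\{g_1(x),\dots,g_r(x)\}=B_x\cap\big(W\setminus\{g_1(x),\dots,g_r(x)\}\big)$, which is $\sigma$-compact, being the intersection of the $\sigma$-compact set $B_x$ with an open set (an open subset of a compact metrizable space is $\sigma$-compact). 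Hence the Arsenin-Kunugui theorem applies to $E$: the set $\pi_X(E)$ is Borel and $E$ admits a Borel uniformization. The instance $r=0$, $W=Y$, $E=B$ already yields that $\pi_X(B)$ is Borel.

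For the recursion, suppose that for some $k\ge 1$ the Borel maps $f_1,\dots,f_{k-1}\colon\pi_X(B)\to Y$ have been constructed so that $\big(x,f_j(x)\big)\in B$ for every $x\in\pi_X(B)$ and $f_1(x),\dots,f_{k-1}(x)$ are pairwise distinct for every such $x$ (for $k=1$ there is nothing to assume). Put
\[ P_k:=\pi_X\Big(\big\{(x,y)\in B\,;\ y\in V_k,\ y\notin\{f_1(x),\dots,f_{k-1}(x)\}\big\}\Big), \]
which is Borel by the fact above, and set
\[ D_k:=\big\{(x,y)\in B\,;\ y\notin\{f_1(x),\dots,f_{k-1}(x)\}\ \text{and}\ \big(x\in P_k\Rightarrow y\in V_k\big)\big\}. \]
Then $D_k$ is Borel, and for each $x$ the section $(D_k)_x$ equals $B_x\cap V_k$ or $B_x$ with finitely many points deleted, hence is $\sigma$-compact. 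Moreover $\pi_X(D_k)=\pi_X(B)$: given $x\in\pi_X(B)$, if $x\in P_k$ pick $y\in(B_x\cap V_k)\setminus\{f_1(x),\dots,f_{k-1}(x)\}$, and if $x\notin P_k$ pick $y\in B_x\setminus\{f_1(x),\dots,f_{k-1}(x)\}$, which is nonempty because $B_x$ is infinite. Let $f_k\colon\pi_X(B)\to Y$ be a Borel uniformization of $D_k$. By construction $\big(x,f_k(x)\big)\in B$ and $f_k(x)\notin\{f_1(x),\dots,f_{k-1}(x)\}$ for every $x$, so the recursion hypotheses are preserved and the construction continues.

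Finally I would check (i)--(iii). Conditions (i) and (iii) are immediate from the construction. For (ii), fix $x\in\pi_X(B)$ and an open $U\subseteq Y$ with $U\cap B_x\neq\emptyset$; choose $k$ with $\emptyset\neq V_k\subseteq U$ and $V_k\cap B_x\neq\emptyset$ (possible since $(V_m)$ is a base). If $B_x\cap V_k\not\subseteq\{f_1(x),\dots,f_{k-1}(x)\}$ then $x\in P_k$, so $f_k(x)\in V_k\subseteq U$; otherwise $V_k\cap B_x$ is a nonempty subset of $\{f_1(x),\dots,f_{k-1}(x)\}$, so $f_j(x)\in V_k\subseteq U$ for some $j<k$. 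Either way $\{f_l(x)\,;\ l\in\N\}$ meets $U$, which proves density of this set in $B_x$. I expect the only genuinely delicate point -- though still routine -- to be the bookkeeping in the recursion: checking at each stage that the modified set $D_k$ really is Borel with $\sigma$-compact sections and still projects onto all of $\pi_X(B)$, so that the Arsenin-Kunugui theorem may legitimately be applied to it.
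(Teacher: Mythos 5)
Your proof is correct, and it takes a genuinely different route from the paper's, though both are built on repeated applications of the Arsenin--Kunugui selection theorem. The paper's proof proceeds in two passes: it first defines $B_k := B \cap (X \times U_k)$ for a base $(U_k)$, obtains a Borel uniformization $g_k$ of each $B_k$, and patches it to $\widetilde{g}_k$ on all of $\pi_X(B)$; these already give a dense selection, and injectivity is then restored \emph{a posteriori} by the counting function $\mu(k,x) = \min\{m : |\{\widetilde{g}_i(x) ; i \le m\}| = k\}$ and $f_k(x) := \widetilde{g}_{\mu(k,x)}(x)$. Your construction instead builds $f_1, f_2, \dotsc$ by recursion, at each stage removing the graphs of the previously chosen maps from $B$ (and, when possible, intersecting with the $k$-th basic open set) before uniformizing; this makes injectivity automatic and forces a small case analysis when verifying density, via your set $P_k$. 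The paper's approach keeps each uniformization simple but needs the extra bookkeeping step with $\mu$; your approach front-loads the bookkeeping into the definition of $D_k$ and relies on the observation that removing the graphs of finitely many Borel maps preserves Borelness and $\sigma$-compactness of the sections. Both are clean; neither has an obvious advantage in length or transparency, and the two styles (parallel construction with post-processing vs.\ recursive construction with built-in disjointness) illustrate the same underlying flexibility of the Arsenin--Kunugui theorem.
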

\begin{proof}
First note that the Arsenin-Kunugui selection theorem immediately gives us that $\pi_X(B)$ is Borel. Let $(U_k)_{k=1}^{\infty}$ be a sequence of open subsets of $Y$ such that $U_1 = Y$ and $\{ U_k ; \, k \in \N \}$ is a base of $Y$. For every $k \in \N$ let $B_k := B \cap (X \times U_k)$. Then each of the sets $B_k$, $k \in \N$, is Borel and has $\sigma$-compact vertical sections. Hence, by the Arsenin-Kunugui selection theorem, for every $k \in \N$ the set $X_k := \pi_X (B_k)$ is Borel in $X$ and there is a Borel uniformization $g_k \colon X_k \to Y$ of $B_k$. For every $k \in \N$ define a mapping $\widetilde{g}_k \colon \pi_X(B) \to Y$ by
\begin{equation*}
    \widetilde{g}_k (x) = \begin{cases}
    g_k (x) & \textup{if } x \in X_k \\
    g_1 (x) & \textup{if } x \in \pi_X(B) \setminus X_k .
            \end{cases}
\end{equation*}
Clearly, each of the mappings $\widetilde{g}_k$, $k \in \N$, is Borel measurable and the set $\{\widetilde{g}_k (x) \, ; \, k\in\N\}$ is dense in $B_x$ for every $x \in \pi_X(B)$. Moreover, since $B_x$ is infinite for every $x \in \pi_X(B)$, the mapping $\mu \colon \N \times \pi_X(B) \to \N$ given by
\[ \mu (k,x) = \mathrm{min} \big\lbrace m \in \N \, ; \, \big| \{ \widetilde{g}_i (x) \, ; \, i \in \N , \, i \leq m \} \big| = k \big\rbrace\]
is well-defined. Finally, define the desired mappings $f_k$, $k \in \N$, by
\[ f_k (x) = \widetilde{g}_{\mu (k,x)} (x) \, , \ \ \ x \in \pi_X(B) \, , \ k \in \N . \]
Clearly, the conditions (i), (ii), (iii) are satisfied and it is straightforward to verify that each of the mappings $f_k$, $k \in \N$, is Borel measurable.
\end{proof}
Before moving to the next lemma, let us note that whenever $A$ is a subset of a Euclidean space, we denote by $\overline{A}$ the closure of $A$ with respect to the whole space, even if $A$ is naturally seen as a subset of a certain subspace of the Euclidean space. The same applies to the boundary of $A$ (denoted as $\partial A$) and the interior of $A$ (denoted as $\Int (A)$).
\begin{lemma}\label{BorelSets}
Let $n \in \N$. Then each of the following three sets is Borel in $\mathcal{K} \big( [0,1]^n \big) \times [0,1]^n \colon$
\begin{align*}
    &\Big\lbrace (K,x) \in \mathcal{K} \big( [0,1]^n \big) \times [0,1]^n \, ; \ x \in (0,1)^n \cap \partial K \Big\rbrace ,\\
    &\Big\lbrace (K,x) \in \mathcal{K} \big( [0,1]^n \big) \times [0,1]^n \, ; \ x \in (0,1)^n \setminus K \Big\rbrace ,\\
    &\Big\lbrace (K,x) \in \mathcal{K} \big( [0,1]^n \big) \times [0,1]^n \, ; \ x \in \Int (K) \Big\rbrace .
\end{align*}
\end{lemma}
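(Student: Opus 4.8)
The plan is to reduce all three assertions to one elementary fact about the Vietoris topology. That fact is that the \emph{membership set}
$\mathsf{M} := \bigl\{ (K,x) \in \mathcal{K}\bigl([0,1]^n\bigr) \times [0,1]^n \, ; \ x \in K \bigr\}$
is closed; this is standard and follows directly from the Hausdorff metric (if $(K_j,x_j) \to (K,x)$ with $x_j \in K_j$ and $x \notin K$, pick $\delta > 0$ with $\mathrm{dist}(x,K) = 2\delta$ and note that for $j$ large, so that $\norm{x-x_j} < \delta$ and $K_j$ lies within Hausdorff distance $\delta$ of $K$, there is a point of $K$ within $2\delta$ of $x$ — a contradiction). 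From this it follows at once that for every compact set $F \subseteq \R^n$ the set $\bigl\{ K \in \mathcal{K}\bigl([0,1]^n\bigr) \, ; \ F \subseteq K \bigr\}$ is closed, since it equals $\bigcap_{y \in F} \bigl\{ K \, ; \ y \in K \bigr\}$ and each $\bigl\{ K \, ; \ y \in K \bigr\}$ is closed, being the preimage of $\mathsf{M}$ under the continuous map $K \mapsto (K,y)$.

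Granting this, the second set of the lemma equals $\bigl(\mathcal{K}\bigl([0,1]^n\bigr) \times (0,1)^n\bigr) \setminus \mathsf{M}$ and is therefore open, in particular Borel. For the third set I would describe the interior by means of rational balls: writing $B(q,r)$ for the open ball of center $q$ and radius $r$, one has $x \in \Int(K)$ if and only if there exist $q \in \mathbb{Q}^n$ and a rational $r > 0$ with $\norm{x-q} < r$ and $\overline{B(q,r)} \subseteq K$. (If $B(x,\varepsilon) \subseteq K$, choose $q$ with $\norm{x-q} < \varepsilon/3$ and a rational $r$ with $\norm{x-q} < r < 2\varepsilon/3$; then $x \in B(q,r)$ and $\overline{B(q,r)} \subseteq B(x,\varepsilon) \subseteq K$. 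The converse is immediate, as $B(q,r)$ is open.) Consequently
\[ \bigl\{ (K,x) \, ; \ x \in \Int(K) \bigr\} = \bigcup_{q \in \mathbb{Q}^n, \ r \in \mathbb{Q} \cap (0,\infty)} \Bigl( \bigl\{ (K,x) \, ; \ \norm{x-q} < r \bigr\} \cap \bigl\{ (K,x) \, ; \ \overline{B(q,r)} \subseteq K \bigr\} \Bigr), \]
and each summand is Borel: the first factor is open (it constrains only $x$) and the second is closed by the previous paragraph, being the preimage under the projection $(K,x) \mapsto K$ of the closed set $\bigl\{ K \, ; \ \overline{B(q,r)} \subseteq K \bigr\}$ (which is empty when $\overline{B(q,r)} \nsubseteq [0,1]^n$). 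A countable union of Borel sets is Borel.

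Finally, since every $K \in \mathcal{K}\bigl([0,1]^n\bigr)$ is closed in $\R^n$, we have $\partial K = K \setminus \Int(K)$, so
\[ \bigl\{ (K,x) \, ; \ x \in (0,1)^n \cap \partial K \bigr\} = \Bigl( \bigl( \mathcal{K}\bigl([0,1]^n\bigr) \times (0,1)^n \bigr) \cap \mathsf{M} \Bigr) \setminus \bigl\{ (K,x) \, ; \ x \in \Int(K) \bigr\} , \]
which is a Boolean combination of sets already shown to be Borel, hence Borel. (The restriction to $(0,1)^n$ is automatic for the third set, since $\Int(K) \subseteq (0,1)^n$ whenever $K \subseteq [0,1]^n$.) I do not expect any genuine obstacle here; the only point requiring a modicum of care is isolating a \emph{countable} family of balls that detects the interior, after which everything rests on the closedness of $\mathsf{M}$ and of the conditions ``$F \subseteq K$''.
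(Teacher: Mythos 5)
Your proof is correct, and it takes a genuinely different route from the one in the paper. The paper first observes that the three sets partition $\mathcal{K}\bigl([0,1]^n\bigr)\times(0,1)^n$, so that only two of them need to be shown Borel; it then handles the first set by invoking the earlier Lemma~\ref{BorelMeas} (Borel measurability of $K\mapsto\partial K$) together with the standard fact that a Borel measurable map between Polish spaces has a Borel graph, and shows directly that the second set is open. Your argument bypasses Lemma~\ref{BorelMeas} entirely: you work only with the closedness of the membership set $\mathsf{M}=\{(K,x);\ x\in K\}$ and a rational-ball description of $\Int(K)$, and then express all three sets as explicit Boolean combinations. This is more self-contained and elementary (Lemma~\ref{BorelMeas} itself requires a pointwise-limit argument with the auxiliary maps $\Phi_n$), and it yields slightly sharper pointclass information along the way: the second set is open, the third is $F_\sigma$, and the first is a closed set intersected with a $G_\delta$. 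On the other hand, it does not actually save any work globally, because Lemma~\ref{BorelMeas} is still needed elsewhere in the paper — most notably for the reduction $X\mapsto\partial X$ in Theorem~\ref{thmAR2} — so once that lemma is on hand the paper's proof is shorter. One tiny remark: you never actually use the partition trick, and in fact you derive the first set from the third rather than vice versa, which is perfectly fine but worth noting as a structural difference.
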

\begin{proof}
It is evident that the three sets form a partition of $\mathcal{K} \big( [0,1]^n \big) \times (0,1)^n$, which obviously is a Borel subset of $\mathcal{K} \big( [0,1]^n \big) \times [0,1]^n$. Thus, it suffices to show that at least two of the three sets are Borel. Let us start by proving that the first one is Borel. By Lemma \ref{BorelMeas}, the mapping from $\mathcal{K} ( \R^n )$ to $\mathcal{K} ( \R^n )$ given by $K \mapsto \partial K$ is Borel measurable. Hence, by \cite[Theorem 28.8]{Kechris}, the set $\big\lbrace (K,x) \in \mathcal{K} ( \R^n ) \times \R^n \, ; \ x \in \partial K \big\rbrace$ is Borel in $\mathcal{K} ( \R^n ) \times \R^n$, which implies that the set $\big\lbrace (K,x) \in \mathcal{K} \big( [0,1]^n \big) \times [0,1]^n \, ; \ x \in \partial K \big\rbrace$ is Borel in $\mathcal{K} \big( [0,1]^n \big) \times [0,1]^n$. Therefore, as the set $\mathcal{K} \big( [0,1]^n \big) \times (0,1)^n$ is Borel in $\mathcal{K} \big( [0,1]^n \big) \times [0,1]^n$, so is the set
\begin{gather*}
    \Big\lbrace (K,x) \in \mathcal{K} \big( [0,1]^n \big) \times [0,1]^n \, ; \ x \in \partial K \Big\rbrace \cap \Big( \mathcal{K} \big( [0,1]^n \big) \times (0,1)^n \Big)\\
    = \Big\lbrace (K,x) \in \mathcal{K} \big( [0,1]^n \big) \times [0,1]^n \, ; \ x \in (0,1)^n \cap \partial K \Big\rbrace .
\end{gather*}

Now, let us show that the second of the three sets in question is Borel. In fact, we claim that it is open: Let $(K,x) \in \mathcal{K} \big( [0,1]^n \big) \times [0,1]^n$ with $x \in (0,1)^n \setminus K$ be given. Clearly, there exists an open set $U$ in $\R^n$ such that $x \in U \subseteq (0,1)^n$ and $\overline{U} \cap K = \emptyset$. By the definition of the Vietoris topology, the set $G := \big\lbrace L \in \mathcal{K} \big( [0,1]^n \big) \, ; \ L \subseteq [0,1]^n \setminus \overline{U} \big\rbrace$ is open in $\mathcal{K} \big( [0,1]^n \big)$. Consequently, $G \times U$ is open in $\mathcal{K} \big( [0,1]^n \big) \times [0,1]^n$. Moreover, it is obvious that $(K,x) \in G \times U$ and for every $(L,y) \in G \times U$ we have $y \in (0,1)^n \setminus L$.
\end{proof}

\begin{lemma}[{\cite[Exercise 4.29]{Kechris}}]\label{UnionIsCts}
Let $X$ be a Polish space and $k \in \N$. Then the mapping from $\big( \mathcal{K} (X) \big)^k$ to $\mathcal{K} (X)$ given by $\displaystyle (K_1 , \dotsc , K_k) \mapsto \bigcup_{i=1}^k K_i$ is continuous.
\end{lemma}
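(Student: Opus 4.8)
The plan is to verify continuity directly against a subbase of the Vietoris topology on the target space $\mathcal{K}(X)$. Recall that this topology is generated by the sets $\langle G \rangle_{\subseteq} := \{ L \in \mathcal{K}(X) \, ; \ L \subseteq G \}$ and $\langle G \rangle_{\cap} := \{ L \in \mathcal{K}(X) \, ; \ L \cap G \neq \emptyset \}$, where $G$ ranges over the open subsets of $X$. Write $u$ for the mapping in question, so that $u(K_1 , \dotsc , K_k) = \bigcup_{i=1}^k K_i$; note first that $u$ is well defined, since a finite union of compact sets is compact. Since the codomain carries the topology generated by the subbase above, it then suffices to check that $u^{-1}(\langle G \rangle_{\subseteq})$ and $u^{-1}(\langle G \rangle_{\cap})$ are open in $\big( \mathcal{K}(X) \big)^k$ for every open $G \subseteq X$.

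For the first type of subbasic set: $\bigcup_{i=1}^k K_i \subseteq G$ holds if and only if $K_i \subseteq G$ for every $i \in \{ 1 , \dotsc , k \}$, so $u^{-1}(\langle G \rangle_{\subseteq})$ is precisely the open box $\langle G \rangle_{\subseteq} \times \dotsb \times \langle G \rangle_{\subseteq}$ ($k$ factors), which is open in the product. For the second type: $\bigl( \bigcup_{i=1}^k K_i \bigr) \cap G \neq \emptyset$ holds if and only if $K_i \cap G \neq \emptyset$ for at least one $i$, so $u^{-1}(\langle G \rangle_{\cap})$ equals the union, over $i \in \{ 1 , \dotsc , k \}$, of the sets $\mathcal{K}(X) \times \dotsb \times \langle G \rangle_{\cap} \times \dotsb \times \mathcal{K}(X)$ (with $\langle G \rangle_{\cap}$ in the $i$-th coordinate); being a finite union of open subsets of $\big( \mathcal{K}(X) \big)^k$, it is open. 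This completes the argument.

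There is no genuine obstacle here: once one commits to testing continuity on the Vietoris subbase, the proof is pure bookkeeping. (An alternative would be to establish the case $k = 2$ and then induct on $k$, using that a composition of continuous maps is continuous, but this brings nothing beyond the direct computation above.)
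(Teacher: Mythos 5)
Your proof is correct, and it is the standard verification of this fact. The paper itself gives no proof — the lemma is stated with a citation to Kechris, Exercise 4.29 — so there is nothing to compare against in the text; your direct subbase check ($u^{-1}$ of each Vietoris subbasic set is a finite product or finite union of open boxes) is exactly the intended argument and fills in the detail the authors delegated to the reference.
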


\begin{theorem}\label{AmbientHomeos}
The equivalence relation of ambient homeomorphisms of compacta in $[0,1]^n$ is Borel reducible to the homeomorphism equivalence relation of $n$-dimensional continua in $[0,1]^{n+1}$ for every $n\in\N$. In particular, $R_n\leq_B C_{n+1}$ for every $n\in\N$.
\end{theorem}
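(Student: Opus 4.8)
The plan is to build a Borel map $\Phi\colon\mathcal{K}\big([0,1]^n\big)\to\mathcal{K}\big([0,1]^{n+1}\big)$ whose image lies in the $n$-dimensional continua and which reduces $R_n$ to the homeomorphism relation. Write $[0,1]^{n+1}=[0,1]^n\times[0,1]$ and let $\mathbb{S}:=\partial\big([0,1]^{n+1}\big)$ be the boundary of the cube in $\R^{n+1}$; it is an $n$-sphere, equal to the union of the faces, and its ``bottom face'' $[0,1]^n\times\{0\}$ is a distinguished copy of the ambient cube. Given $K$, set
\[\Phi(K):=\mathbb{S}\ \cup\ \big(\partial K\times[0,1]\big)\ \cup\ \mathcal{H}(K),\]
where $\partial K$ is the boundary of $K$ in $\R^n$ and $\mathcal{H}(K)$ is a countable family of pairwise disjoint vertical ``hairs'' $\{a_k\}\times[0,2^{-k}]$ attached to the bottom face over an injective sequence $(a_k)$ dense in $\Int K$ (empty if $\Int K=\emptyset$; note $\Int K\subseteq(0,1)^n$). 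Two features are essential. Since $K$ is closed, $\partial K$ has empty interior in $\R^n$, so $\dim\partial K\le n-1$ and the ``membrane'' $\partial K\times[0,1]$ has dimension at most $n$; thus $\Phi(K)$ remains $n$-dimensional even when $\Int K\neq\emptyset$. And the hairs poke into the complementary region ``inside $K$'', breaking the symmetry between the two sides of the membrane, so that $\Phi(K)$ records \emph{where} $K$ sits and not merely $\partial K$ — which by itself does not determine $K$, e.g.\ for a ball versus its bounding sphere.

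First I would verify that $\Phi(K)$ is an $n$-dimensional continuum in $[0,1]^{n+1}$: it is connected since everything is attached to the connected set $\mathbb{S}$; it is closed (hence compact) by Lemma~\ref{ClosednessOfUnion}, with $F=\mathbb{S}\cup(\partial K\times[0,1])$ and the hairs as the family $\mathcal{A}$ (their diameters tend to $0$); and $\dim\Phi(K)=n$ by the countable sum theorem, since $\Phi(K)$ is a countable union of closed sets of dimension $\le n$ (the $n$-cell $\mathbb{S}$, the membrane, and arcs) and contains $\mathbb{S}$. For Borel measurability: $K\mapsto\partial K$ is Borel by Lemma~\ref{BorelMeas}, hence so is $K\mapsto\partial K\times[0,1]$; the Borel set $\{(K,x):x\in\Int K\}$ of Lemma~\ref{BorelSets} and Proposition~\ref{uniformization} yield Borel maps selecting an injective sequence dense in $\Int K$, so each hair depends Borel-measurably on $K$; and $\Phi(K)$ is the Vietoris limit of the increasing sequence of finite unions $\mathbb{S}\cup(\partial K\times[0,1])$ together with the first $m$ hairs, each Borel in $K$ by Lemma~\ref{UnionIsCts}.

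For the easy direction of the reduction, let $h\colon[0,1]^n\to[0,1]^n$ be a homeomorphism with $h(K)=L$. A self-homeomorphism of $[0,1]^n$ maps $(0,1)^n$ onto $(0,1)^n$ and $\partial[0,1]^n$ onto $\partial[0,1]^n$, so $h(\Int K)=\Int L$ and $h(\partial K)=\partial L$; hence $h\times\mathrm{id}_{[0,1]}$ carries $\mathbb{S}$ onto $\mathbb{S}$ and the membrane of $K$ onto that of $L$. It need not carry the chosen hairs of $K$ to those of $L$, but it carries their feet onto a countable dense subset of $\Int L$; Lemma~\ref{SCDH}, applied with $G=\Int L$, then produces a self-homeomorphism of $\R^n$ supported in $G$ matching these up with the feet of the $L$-hairs. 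Composing, and then rescaling each hair linearly onto its image (harmless since the lengths tend to $0$), one assembles a homeomorphism $\Phi(K)\to\Phi(L)$.

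The hard direction, which I expect to be the main obstacle, is to show that any homeomorphism $\Psi\colon\Phi(K)\to\Phi(L)$ forces $K\mathrel{R_n}L$; this is a rigidity statement, and the work is to recover the above decomposition topologically. The steps would be: (i) recover $\mathbb{S}$ (for $n\ge2$, e.g.\ as the unique closed $n$-manifold subcontinuum, or via local dimension and local homology — the membrane and hairs introduce non-manifold points along $\partial K\times\{0,1\}$ and along the feet); (ii) recover the membrane $\partial K\times[0,1]$ as what essentially remains after deleting $\mathbb{S}$ and the hairs, the hairs being singled out by their $1$-dimensional, locally tree-like neighborhoods; (iii) identify the complementary regions of $\mathbb{S}\cup(\partial K\times[0,1])$ in the ball $[0,1]^{n+1}$ — those meeting the hairs are precisely the parts lying ``inside $K$'', so projecting them to the bottom face recovers $\Int K$, while the membrane recovers $\partial K$, and hence $K$; and (iv) check that $\Psi$ respects the bottom face (this is where the hairs, attached only along the bottom, are used) and that the resulting self-homeomorphism of the bottom $n$-cell can be upgraded, by a Schoenflies-type extension, to a self-homeomorphism of $[0,1]^n$ carrying $K$ to $L$. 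Making (i)--(iv) precise requires care with invariance of domain and with separation arguments inside $[0,1]^{n+1}$; and the case $n=1$, where dimension no longer distinguishes the pieces and $\Phi(K)$ is a planar graph-like continuum, would have to be handled separately, e.g.\ via cut points and the order of points.
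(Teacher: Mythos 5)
Your construction has a concrete defect that kills the reduction before the rigidity argument even begins. Suppose $K\subseteq\partial[0,1]^n$ — say $K$ a singleton on a face, or a two-point set, or a Cantor set on a face. Then $\Int K=\emptyset$ (so $\mathcal{H}(K)=\emptyset$) and $\partial K=K$, and the membrane $\partial K\times[0,1]=K\times[0,1]$ lies inside $\partial[0,1]^n\times[0,1]$, which is exactly the union of the side faces of $[0,1]^{n+1}$ and hence already a subset of $\mathbb{S}$. Thus $\Phi(K)=\mathbb{S}$ for \emph{every} such $K$, while, e.g., a singleton and a two-point subset of $\partial[0,1]^n$ are certainly not ambient homeomorphic in $[0,1]^n$. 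The culprit is taking $\mathbb{S}$ (the whole boundary sphere) as the base: it swallows the membrane precisely where $K$ meets $\partial[0,1]^n$. The paper avoids this by using only the bottom face $[0,1]^n\times\{0\}$ as the base, so the wall $\partial K\times(0,1]$ is always visible no matter where $K$ sits.

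Even after switching to the bottom face, a second structural gap remains in your steps (i)–(iv). You place hairs only over a dense subset of $\Int K$, so the hair-feet are dense only in $\overline{\Int K}\times\{0\}$, which may be far smaller than the bottom face (and empty). Your step (iv), which needs the homeomorphism to fix the bottom face setwise, has nothing to pin the remainder of that face down. The paper's key trick is to plant \emph{two} families of markers — feet dense in $(0,1)^n\setminus K$ and feet dense in $\Int K$ — so that the union of all feet is dense in the entire base $[0,1]^n\times\{0\}$. Moreover the two families are given $2$ and $3$ ``legs'' respectively, so that the simple invariant ``the number of connected components of $\varphi(Z)\setminus\{w\}$ equals $i+2$'' picks out exactly the type-$i$ feet; from this and density one gets in a few lines that a homeomorphism must carry base to base, membrane to membrane, and hence $K\times\{0\}$ to $L\times\{0\}$. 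Your step (iv) substitutes for all of that a vague appeal to hairs ``attached only along the bottom'' plus a ``Schoenflies-type extension'' — which you do not need once the base is preserved, and which would be in serious trouble for $n\ge 4$ if you did. In short: the use of $\mathbb{S}$ is an outright error, the absence of markers over $(0,1)^n\setminus K$ is a missing idea, and the rigidity steps (i)–(iv) need the explicit component-counting mechanism that the paper supplies.
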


\begin{proof}
Let $n\in\N$. By \cite[Exercise~4.13]{Hjorth} (see \cite{ChangGaoDim} for more details), the equivalence relation $R_1$ is Borel bireducible with the isomorphism ER of countable graphs, which is (as was explained at the beginning of the proof of Theorem \ref{thmAR2}) Borel bireducible with the homeomorphism ER of dendrites in $[0,1]^2$. Therefore, as dendrites are known to be $1$-dimensional and every dendrite is a continuum, the relation $R_1$ is Borel reducible to the homeomorphism ER of $1$-dimensional continua in $[0,1]^2$. Thus, let us assume that $n > 1$. Denote $X:=\mathcal{K} \big( [0,1]^n \big)$ and $Y:=[0,1]^n$. Then $X$, $Y$ are Polish spaces. Consider the sets
\begin{align*}
    B_1 &:= \big\lbrace (K,y) \in X \times Y ; \ y \in (0,1)^n \setminus K \big\rbrace ,\\
    B_2 &:= \big\lbrace (K,y) \in X \times Y ; \ y \in \Int (K) \big\rbrace .
\end{align*}
By Lemma \ref{BorelSets}, these two sets are Borel. Moreover, for every $K \in X$ we have
\begin{align*}
    \big\lbrace y \in Y ; \ (K,y) \in B_1 \big\rbrace &= \big\lbrace y \in [0,1]^n ; \ y \in (0,1)^n \setminus K \big\rbrace = (0,1)^n \setminus K ,\\
    \big\lbrace y \in Y ; \ (K,y) \in B_2 \big\rbrace &= \big\lbrace y \in [0,1]^n ; \ y \in \Int (K) \big\rbrace = \Int (K) .
\end{align*}
In particular, vertical sections of $B_1$ and $B_2$ are open in $\R^n$ and thus they are $\sigma$-compact and infinite (when nonempty). Hence, by Proposition \ref{uniformization}, the sets $\pi_X(B_1)$, $\pi_X(B_2)$ are Borel and there exist Borel measurable mappings $f_k^1 \colon \pi_X(B_1) \to Y$, $f_k^2 \colon \pi_X(B_2) \to Y$, $k \in \N$, such that: 
\begin{itemize}
    \item $\forall \, k \in \N \ \ \forall \, K \in \pi_X(B_1) : \, f_k^1 (K) \in (0,1)^n \setminus K \, ;$
    \item $\forall \, k \in \N \ \ \forall \, K \in \pi_X(B_2) : \, f_k^2 (K) \in \Int (K) \, ;$
    \item the set $\{ f_k^1 (K) \, ; \, k\in\N\}$ is dense in $(0,1)^n \setminus K$ for every $K \in \pi_X(B_1) \, ;$
    \item the set $\{ f_k^2 (K) \, ; \, k\in\N\}$ is dense in $\Int (K)$ for every $K \in \pi_X(B_2) \, ;$
    \item the sequence $\big( f_k^i (K) \big)_{k=1}^{\infty}$ is injective for all $i \in \{ 1,2 \}$ and $K \in \pi_X(B_i)$.
\end{itemize}
It immediately follows from the definitions of the sets $B_1$ and $B_2$ that $\pi_X(B_1) = \big\lbrace K \in X \, ; \ K \neq [0,1]^n \big\rbrace$ and $\pi_X(B_2) = \big\lbrace K \in X \, ; \ \Int (K) \neq \emptyset \big\rbrace$. For every $i \in \{ 1,2 \}$, $K \in \pi_X(B_i)$ and $m \in \N$ with $m > 1$ let
\[ {\delta}_m^i (K) := \mathrm{min} \Big\lbrace \big\lVert f_k^i (K) - f_l^i (K) \big\rVert \, ; \ k,l \in \N , \, k \leq m , \, l \leq m , \, k \neq l \Big\rbrace \]
and ${\delta}_1^i (K) := 1$. For every $K \in \pi_X(B_1)$ and every $m \in \N$ let
\[ r_m^1 (K) := \mathrm{max} \Big\lbrace 2^{-k} ; \, k \in \N , \, \forall x \in K : \norm{x-f_m^1 (K)} \geq 2^{-k} \Big\rbrace . \]
Also, for every $K \in \pi_X(B_2)$ and every $m \in \N$ let
\[ r_m^2 (K) := \mathrm{max} \Big\lbrace 2^{-k} ; \, k \in \N , \, \forall x \in [0,1]^n \setminus \Int (K) : \norm{x-f_m^2 (K)} \geq 2^{-k} \Big\rbrace . \]
For every $i \in \{ 1,2 \}$, every $K \in \pi_X(B_i)$ and every $m \in \N$ let
\[ t_m^i (K) := \frac{1}{2} \mathrm{min} \Big\lbrace 2^{-m} , \ \frac{r_m^i (K)}{\sqrt{n}} , \ \frac{{\delta}_m^i (K)}{{\delta}_m^i (K) + \sqrt{n}} \Big\rbrace . \]
It is clear that each of the numbers ${\delta}_m^i (K)$, $r_m^i (K)$ and $t_m^i (K)$ is positive for every $i \in \{ 1,2 \}$, every $K \in \pi_X(B_i)$ and every $m \in \N$. Choose and fix three distinct points $p_1 , p_2 , p_3 \in [0,1]^n$. For the sake of simpler notation let us identify ${\R}^{n+1}$ with ${\R}^n \times \R$. For every $i \in \{ 1,2 \}$, $j \in \{ 1,2,3 \}$, $K \in \pi_X(B_i)$, $m \in \N$ and $t \in \R$ let
\begin{align*}
    p_j^i (K,m,t) :&= \big( f_m^i (K) , 0 \big) + t (p_j - f_m^i (K) , 1)\\
    &= \big( t p_j + (1-t) f_m^i (K) , t \big) .
\end{align*}
Moreover, for every $i \in \{ 1,2 \}$, $K \in X$ and $m \in \N$ let
\begin{equation*}
    P_m^i (K) := \begin{cases}
     \big\lbrace p_j^i (K,m,t) \, ; \ j \leq i+1 , \ 0 < t \leq t_m^i (K) \big\rbrace & \textup{if } K \in \pi_X(B_i) \\
    \emptyset & \textup{if } K \notin \pi_X(B_i) .
\end{cases}
\end{equation*}
For every $i \in \{ 1,2 \}$ and every $K \in X$ denote
\[ P^i (K) := \bigcup_{m=1}^{\infty} P_m^i (K) . \]

\begin{claim}\label{DiamGoesToZero3}
For every $i \in \{ 1,2 \}$ and $K \in \pi_X(B_i)$ we have
\[ \lim_{m \to \infty} \mathrm{diam} \big( P_m^i (K) \big) = 0 . \]
\end{claim}
\begin{claimproof}
Let $i \in \{ 1,2 \}$ and $K \in \pi_X(B_i)$ be given. For every $m \in \N$ and every $x \in P_m^i (K)$ there is a natural number $j \leq i+1$ and a real number $t$ with $0 < t \leq t_m^i (K)$ such that $x=p_{j}^i (K,m,t)$, hence
\begin{align*}
    \big\lVert x- \big( f_m^i (K) , 0 \big) \big\rVert &= \big\lVert \big( t p_j -t f_m^i (K) , t \big) \big\rVert = t \big\lVert \big( p_j - f_m^i (K) , 1 \big) \big\rVert \\
    &\leq t \big( \lVert p_j \rVert + \lVert f_m^i (K) \rVert +1 \big) \leq t \big( \sqrt{n}+\sqrt{n}+1 \big) \\
    &\leq t_m^i (K) \big( 1+2 \sqrt{n} \big) \leq 2^{-m-1} \big( 1+2 \sqrt{n} \big).
\end{align*}
Therefore, for every $m \in \N$ and every two points $x_1 , x_2 \in P_m^i (K)$ we have
\[ \lVert x_1 - x_2 \rVert \leq \big\lVert x_1 - \big( f_m^i (K) , 0 \big) \big\rVert + \big\lVert x_2 - \big( f_m^i (K) , 0 \big) \big\rVert \leq 2^{-m} \big( 1+2 \sqrt{n} \big) . \]
Thus, $\mathrm{diam} \big( P_m^i (K) \big) \leq 2^{-m} \big( 1+2 \sqrt{n} \big)$, $m \in \N$, and the claim follows. \claimend
\end{claimproof}

\begin{claim}\label{MarkersAreDisjoint1}
For every $i \in \{ 1,2 \}$, $K \in X$ and every two distinct numbers $m_1 , m_2 \in \N$ we have $P_{m_1}^i (K) \cap P_{m_2}^i (K) = \emptyset$.
\end{claim}
\begin{claimproof}
Let $i \in \{ 1,2 \}$, $K \in X$ and $m_1 , m_2 \in \N$ with $m_1 \neq m_2$ be given and assume that $P_{m_1}^i (K) \cap P_{m_2}^i (K) \neq \emptyset$. Then, by the definitions of the sets $P_{m_1}^i (K)$ and $P_{m_2}^i (K)$, it is clear that $K \in \pi_X(B_i)$ and there are numbers $j_1 , j_2 \in \N$ and $t_1 , t_2 \in \R$ such that $j_1 \leq i+1$, $j_2 \leq i+1$, $0 < t_1 \leq t_{m_1}^i (K)$, $0 < t_2 \leq t_{m_2}^i (K)$ and
\[ \big( f_{m_1}^i (K) , 0 \big) + t_1 \big( p_{j_1} - f_{m_1}^i (K) , 1 \big) = \big( f_{m_2}^i (K) , 0 \big) + t_2 \big( p_{j_2} - f_{m_2}^i (K) , 1 \big) . \]
This equality clearly implies that $t_1 = t_2$ and
\[ f_{m_1}^i (K) + t p_{j_1} - t f_{m_1}^i (K) = f_{m_2}^i (K) + t p_{j_2} - t f_{m_2}^i (K), \]
where $t:=t_1$. It follows that
\[ t \big( p_{j_1} - p_{j_2} - f_{m_1}^i (K) + f_{m_2}^i (K) \big) = f_{m_2}^i (K) - f_{m_1}^i (K). \]
In particular,
\[ t \big\lVert p_{j_1} - p_{j_2} - f_{m_1}^i (K) + f_{m_2}^i (K) \big\rVert = \big\lVert f_{m_2}^i (K) - f_{m_1}^i (K) \big\rVert . \]
Since $m_1 \neq m_2$, we have $f_{m_1}^i (K) \neq f_{m_2}^i (K)$, which shows that
\[ \big\lVert f_{m_2}^i (K) - f_{m_1}^i (K) \big\rVert \neq 0. \]
Hence,
\[ \big\lVert p_{j_1} - p_{j_2} - f_{m_1}^i (K) + f_{m_2}^i (K) \big\rVert \neq 0 \]
and we have
\[ t = \frac{\big\lVert f_{m_2}^i (K) - f_{m_1}^i (K) \big\rVert}{\big\lVert p_{j_1} - p_{j_2} - f_{m_1}^i (K) + f_{m_2}^i (K) \big\rVert} . \]
By the triangle inequality,
\[ t \geq \frac{\big\lVert f_{m_2}^i (K) - f_{m_1}^i (K) \big\rVert}{\big\lVert f_{m_2}^i (K) - f_{m_1}^i (K) \big\rVert + \big\lVert p_{j_2} - p_{j_1} \big\rVert} . \]
Let $m:= \mathrm{max} \{ m_1 , m_2 \}$. Then $m > 1$ and it follows from the definition of ${\delta}_m^i (K)$ that ${\delta}_m^i (K) \leq \big\lVert f_{m_2}^i (K) - f_{m_1}^i (K) \big\rVert$. Therefore, as the function $s \mapsto \frac{s}{s+c}$ is non-decreasing on $(0 , \infty)$ for every $c \geq 0$, we have
\[ t \geq \frac{{\delta}_m^i (K)}{{\delta}_m^i (K) + \big\lVert p_{j_2} - p_{j_1} \big\rVert} . \]
Moreover, since $p_{j_1} , p_{j_2} \in [0,1]^n$, we have $\big\lVert p_{j_2} - p_{j_1} \big\rVert \leq \mathrm{diam} \big( [0,1]^n \big) = \sqrt{n}$. Hence,
\[ t \geq \frac{{\delta}_m^i (K)}{{\delta}_m^i (K) + \sqrt{n} } . \]
Finally, it follows from the definition of $t_m^i (K)$ that
\[ \frac{{\delta}_m^i (K)}{{\delta}_m^i (K) + \sqrt{n} } \geq 2 t_m^i (K) , \]
thus, $t \geq 2 t_m^i (K)$, which contradicts the fact that $t \leq t_m^i (K) < 2 t_m^i (K)$.\claimend
\end{claimproof}
\begin{claim}\label{MarkersAreDisjoint2}
For every $K \in X$ we have $P^1 (K) \subseteq \big( [0,1]^n \setminus K \big) \times \big( 0, \frac{1}{4} \big]$ and $P^2 (K) \subseteq \Int (K) \times \big( 0, \frac{1}{4} \big]$. In particular,
\[ P^1 (K) \cap \big( \partial K \times [0,1] \big) = P^2 (K) \cap \big( \partial K \times [0,1] \big) = P^1 (K) \cap P^2 (K) = \emptyset \]
for every $K \in X$.
\end{claim}

\begin{claimproof}
Let $K \in X$ be arbitrary and denote $A_1 := [0,1]^n \setminus K$, $A_2 := \Int (K)$. We will show that $P^i (K) \subseteq A_i \times \big( 0, \frac{1}{4} \big]$ for every $i \in \{ 1,2 \}$. Let $i \in \{ 1,2 \}$ be given. If $K \notin \pi_X(B_i)$, we have $P^i (K) = \emptyset$ and we are done. Thus, let us assume that $K \in \pi_X(B_i)$ and let $p \in P^i (K)$ be given. By the definition of $P^i (K)$, there exist $m \in \N$, $j \in \N$ with $j \leq i+1$ and $t > 0$ with $t \leq t_m^i (K)$ such that $p = \big( tp_j + (1-t) f_m^i (K) , \, t  \big)$. Since $t_m^i (K) \leq \frac{1}{2} \cdot 2^{-m} \leq \frac{1}{4}$, we have $0 < t \leq \frac{1}{4}$. Hence, it suffices to show that the point $x:= tp_j + (1-t) f_m^i (K)$ lies in $A_i$. As a convex combination of points belonging to $[0,1]^n$, the point $x$ belongs to $[0,1]^n$ as well. Moreover, we have
\begin{align*}
    \big\lVert x - f_m^i (K) \big\rVert &= \big\lVert t p_j - t f_m^i (K) \big\rVert = t \big\lVert p_j - f_m^i (K) \big\rVert \leq t \cdot \mathrm{diam} \big( [0,1]^n \big)\\
    &\leq t \cdot \sqrt{n} \leq t_m^i (K) \cdot \sqrt{n} \leq \frac{r_m^i (K)}{2 \sqrt{n}} \cdot \sqrt{n} = \frac{r_m^i (K)}{2} < r_m^i (K) .
\end{align*}
Thus, by the definitions of $A_i$ and $r_m^i (K)$, we conclude that $x \in A_i$.\claimend
\end{claimproof}

\begin{claim}
The set $\big( [0,1]^n \times \{ 0 \} \big) \cup \big( \partial K \times (0,1] \big) \cup P^1 (K) \cup P^2 (K)$ is an $n$-dimensional subcontinuum of $[0,1]^{n+1}$ for every $K \in X$.
\end{claim}
\begin{claimproof}
Let $K \in X$ be given and denote
\[ Z:= \big( [0,1]^n \times \{ 0 \} \big) \cup \big( \partial K \times (0,1] \big) \cup P^1 (K) \cup P^2 (K) . \]
It is easy to see that $Z$ is a path-connected subset of $[0,1]^{n+1}$. Let us show that $Z$ is closed (in $\R^{n+1}$). Since the set $\big( [0,1]^n \times \{ 0 \} \big) \cup \big( \partial K \times (0,1] \big)$ clearly is closed, it suffices to show that the set $\big( [0,1]^n \times \{ 0 \} \big) \cup P^i (K)$ is closed for each $i \in \{ 1,2 \}$. Let $i \in \{ 1,2 \}$ be arbitrary. We can assume that $K \in \pi_X(B_i)$, otherwise $P^i (K) = \emptyset$ and we are done. It is clear that
\[ \overline{P_m^i (K)} = P_m^i (K) \cup \big\lbrace \big( f_m^i (K) , 0 \big) \big\rbrace \]
for every $m \in \N$. In particular, we have $\overline{P_m^i (K)} \setminus P_m^i (K) \subseteq [0,1]^n \times \{ 0 \}$ and $\overline{P_m^i (K)} \cap \big( [0,1]^n \times \{ 0 \} \big) \neq \emptyset$ for every $m \in \N$. Therefore, using Claim \ref{DiamGoesToZero3} and the closedness of the set $[0,1]^n \times \{ 0 \}$, we conclude by Lemma \ref{ClosednessOfUnion} that the set $\big( [0,1]^n \times \{ 0 \} \big) \cup P^i (K)$ is closed.

As a closed bounded subset of $\R^{n+1}$, the set $Z$ is compact. Also, since $[0,1]^n \times \{ 0 \} \subseteq Z$, it is clear that the dimension of $Z$ is at least $n$. Finally, since one can easily see that $Z$ is meagre in $\R^{n+1}$, the interior of $Z$ is empty. Hence, by \cite[Corollary 1.8.4, Theorem 1.8.11]{EngelkingDimension}, the dimension of $Z$ is equal to $n$.
\claimend
\end{claimproof}

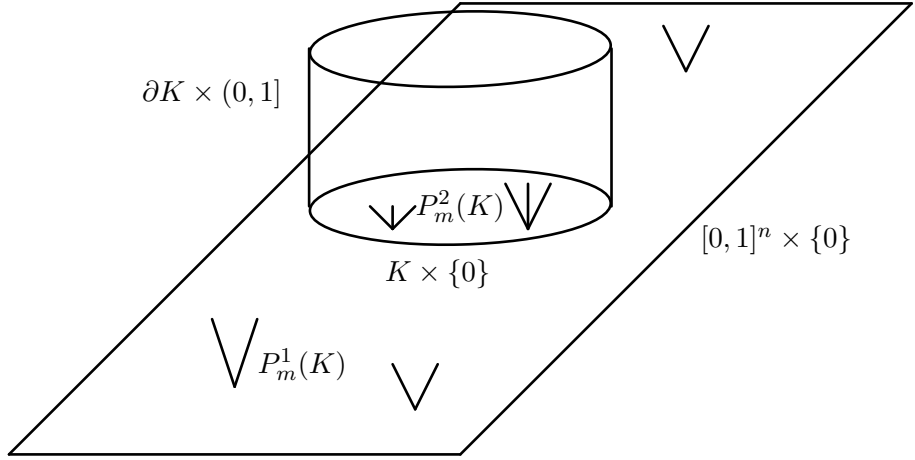
\begin{figure}
\begin{tikzpicture}[line cap=round,line join=round,>=triangle 45,x=3cm,y=3cm]
\draw [line width=1pt] (0,0)-- (2,0);  
\draw [line width=1pt] (2,2)-- (4,2);
\draw [line width=1pt] (2,2)-- (0,0);
\draw [line width=1pt] (4,2)-- (2,0);

\draw [line width=1pt] (0.9,0.6)-- (1,0.3);  
\draw [line width=1pt] (1.1,0.6)-- (1,0.3);

\draw [line width=1pt] (1.7,0.4)-- (1.8,0.2); 
\draw [line width=1pt] (1.8,0.2)-- (1.9,0.4);

\draw [line width=1pt] (2.9,1.9)-- (3,1.7); 
\draw [line width=1pt] (3,1.7)-- (3.1,1.9);

\draw [line width=1pt] (1.6,1.1)-- (1.7,1);  
\draw [line width=1pt] (1.7,1)-- (1.7,1.1);
\draw [line width=1pt] (1.7,1)-- (1.8,1.1);
\draw[color=black] (2.0,1.1) node {$P^2_m(K)$};

\draw [line width=1pt] (2.3,1)-- (2.2,1.2); 
\draw [line width=1pt] (2.3,1)-- (2.3,1.2);
\draw [line width=1pt] (2.3,1)-- (2.4,1.2);

\draw [rotate around={1.5:(2.1,1.8)},line width=1pt] (2.0,1.8) ellipse (2cm and 0.5cm);  
\draw [rotate around={1.5:(2.1,1.1)},line width=1pt] (2.0,1.1) ellipse (2cm and 0.5cm);    
\draw [line width=1pt] (1.33,1.1)-- (1.33,1.8);  
\draw [line width=1pt] (2.67,1.1)-- (2.67,1.8);

\draw[color=black] (3.4,0.95) node {$[0,1]^n\times \{0\}$};
\draw[color=black] (1.9,0.8) node {$ K\times\{0\}$};
\draw[color=black] (1.3,0.4) node {$P^1_m(K)$};
\draw[color=black] (0.9,1.6) node {$\partial K\times(0,1]$};
\end{tikzpicture}
\caption{Compact set $\varphi(K)$ topologically.}\label{Fig:fik}
\end{figure}

Define a mapping $\varphi \colon X \to {\mathsf{C}}_n \big( [0,1]^{n+1} \big)$ by
\[ \varphi (K) = \big( [0,1]^n \times \{ 0 \} \big) \cup \big( \partial K \times (0,1] \big) \cup P^1 (K) \cup P^2 (K) \]
(see Figure \ref{Fig:fik}).
By the previous claim, the mapping $\varphi$ is well-defined. Let us show that it is a reduction from $R_n$ to the homeomorphism ER on ${\mathsf{C}}_n \big( [0,1]^{n+1} \big)$. Given $K,L \in X$, we would like to show that $(K,L) \in R_n$ if and only if $\varphi (K)$ is homeomorphic to $\varphi (L)$. First, assume that $(K,L) \in R_n$. Let $g_0$ be a self-homeomorphism of $[0,1]^n$ such that $g_0 (K)=L$. It is clear that $K = [0,1]^n$ if and only if $L = [0,1]^n$. If this is the case, we are immediately done, as $\varphi (K) = \varphi \big( [0,1]^n \big) = \varphi (L)$. Hence, let us assume that $K \neq [0,1]^n \neq L$. In other words, assume that $K,L \in \pi_X(B_1)$. Moreover, let us distinguish two cases:
\begin{claim}
If $K \notin \pi_X(B_2)$, then $\varphi (K)$ is homeomorphic to $\varphi (L)$.
\end{claim}
\begin{claimproof}
The assumption that $K \notin \pi_X(B_2)$ is equivalent to $\Int (K) = \emptyset$. By the domain invariance theorem, $g_0$ maps $\Int (K)$ onto $\Int (L)$. Thus, $\Int (L) = \emptyset$ and $L \notin \pi_X(B_2)$. It follows that $P^2 (K) = \emptyset$, $P^2 (L) = \emptyset$ and
\begin{align*}
    \varphi (K) &= \big( [0,1]^n \times \{ 0 \} \big) \cup \big( \partial K \times (0,1] \big) \cup P^1 (K) , \\
    \varphi (L) &= \big( [0,1]^n \times \{ 0 \} \big) \cup \big( \partial L \times (0,1] \big) \cup P^1 (L) .
\end{align*}
Let $D_1 := g_0 \big( \{ f_k^1 (K) \, ; \, k \in \N \} \big)$ and $D_2 := \{ f_k^1 (L) \, ; \, k \in \N \}$. We know that $D_2$ is a dense subset of $(0,1)^n \setminus L$. By the bijectivity of $g_0$ and by the domain invariance theorem, we have
\[ g_0 \big( (0,1)^n \setminus K \big) = g_0 \big( (0,1)^n \big) \setminus g_0 (K) = (0,1)^n \setminus L . \]
Therefore, since the set $\{ f_k^1 (K) \, ; \, k \in \N \}$ is dense in $(0,1)^n \setminus K$, it follows that $D_1$ is a dense subset of $(0,1)^n \setminus L$. By Lemma \ref{SCDH}, there exists an onto homeomorphism $g_1 \colon \R^n \to \R^n$ such that $g_1 (D_1) = D_2$ and $g_1 (x) = x$ for every $x \in \R^n \setminus \big( (0,1)^n \setminus L \big) = \big( \R^n \setminus (0,1)^n \big) \cup L$. Hence, the mapping $g \colon [0,1]^n \to [0,1]^n$ given by $g(x) = g_1 \big( g_0 (x) \big)$ is a self-homeomorphism of $[0,1]^n$ satisfying $g(K) = L$ and $~{g \big( \{ f_k^1 (K) \, ; \, k \in \N \} \big) = \{ f_k^1 (L) \, ; \, k \in \N \} }$. Also, by the domain invariance theorem, $g \big( \Int (K) \big) = \Int (L)$, which implies that $g (\partial K) = \partial L$. For every $m \in \N$ let $\mu (m)$ be the (uniquely determined) natural number such that $g \big( f_m^1 (K) \big) = f_{\mu (m)}^1 (L)$. Denote $\Lambda := \{ 1,2 \}$ and $I := (0,1]$. Define a mapping $h \colon \varphi (K) \to \varphi (L)$ by
\begin{align*}
    h(x,0) &= \big( g(x) , 0 \big) \, , & x &\in [0,1]^n \, ;\\
    h(x,t) &= \big( g(x) , t \big) \, , & x &\in \partial K , \, t \in I \, ; \\
    h \Big( p_j^1 \big( K , m , s t_m^1 (K) \big) \Big) &= p_j^1 \big( L , \mu (m) , s t_{\mu (m)}^1 (L) \big) \, , & j &\in \Lambda , \, m \in \N , \, s \in I .
\end{align*}
It is easy to see that $h$ is a well-defined bijection satisfying
\[ h \big( [0,1]^n \times \{ 0 \} \big) = [0,1]^n \times \{ 0 \} , \ h \big( \partial K \times I \big) = \partial L \times I , \ h \big( P^1 (K) \big) = P^1 (L) . \]
Moreover, with the help of Claims \ref{DiamGoesToZero3}, \ref{MarkersAreDisjoint1} and \ref{MarkersAreDisjoint2} it is straightforward to verify that $h$ is continuous. Thus, as $\varphi (K)$ is compact, $h$ is a homeomorphism. \claimend
\end{claimproof}

\begin{claim}\label{xxx}
If $K \in \pi_X(B_2)$, then $\varphi (K)$ is homeomorphic to $\varphi (L)$.
\end{claim}
\begin{claimproof}
We will proceed similarly as in the previous case, but we will leave out some of the details this time. Since $K \in \pi_X(B_2)$, we have $\Int (K) \neq \emptyset$. Thus, $\Int (L) = g_0 \big( \Int (K) \big) \neq \emptyset$ and $L \in \pi_X(B_2)$. Let
\begin{align*}
    D_1^1 &:= g_0 \big( \{ f_k^1 (K) \, ; \, k \in \N \} \big) , & D_2^1 &:= \{ f_k^1 (L) \, ; \, k \in \N \} ,\\
    D_1^2 &:= g_0 \big( \{ f_k^2 (K) \, ; \, k \in \N \} \big) , & D_2^2 &:= \{ f_k^2 (L) \, ; \, k \in \N \}.
\end{align*}
Then $D_1^1$, $D_2^1$ are countable dense subsets of $(0,1)^n \setminus L$ and $D_1^2$, $D_2^2$ are countable dense subsets of $\Int (L)$. By Lemma \ref{SCDH}, there are onto homeomorphisms $g_1 , g_2 \colon \R^n \to \R^n$ such that $g_1 (D_1^1) = D_2^1$, $g_2 (D_1^2) = D_2^2$, $g_1 (x) = x$ for every $x \in \big( \R^n \setminus (0,1)^n \big) \cup L$ and $g_2 (x) = x$ for every $x \in \R^n \setminus \Int (L)$. At this point, one can easily see that $g_1 (L) = g_2 (L) = L$, $g_1 \big( D_1^2 \big) = D_1^2$, $g_2 \big( D_2^1 \big) = D_2^1$ and $g_1 \big( [0,1]^n \big) = g_2 \big( [0,1]^n \big) = [0,1]^n$. Consequently, it is clear that the mapping $g \colon [0,1]^n \to [0,1]^n$ given by $~{g(x) = (g_2 \circ g_1 \circ g_0)(x)}$, $x \in [0,1]^n$, is a self-homeomorphism of $[0,1]^n$ such that $g(K)=L$ and $g \big( \{ f_k^i (K) \, ; \, k \in \N \} \big) = \{ f_k^i (L) \, ; \, k \in \N \}$ for every $i \in \{ 1,2 \}$. Moreover, by the domain invariance theorem, $g \big( \Int (K) \big) = \Int (L)$, which implies that $g (\partial K) = \partial L$. For each $m \in \N$ let $\mu (m)$, $\nu (m)$ be the (uniquely determined) natural numbers such that $g \big( f_m^1 (K) \big) = f_{\mu (m)}^1 (L)$ and $g \big( f_m^2 (K) \big) = f_{\nu (m)}^2 (L)$. Denote $\Lambda := \{ 1,2 \}$, $\Gamma := \{ 1,2,3 \}$ and $I := (0,1]$. Let us define a mapping $h \colon \varphi (K) \to \varphi (L)$ by
\begin{align*}
    h(x,0) &= \big( g(x) , 0 \big) \, , & x &\in [0,1]^n \, ;\\
    h(x,t) &= \big( g(x) , t \big) \, , & x &\in \partial K , \, t \in I \, ; \\
    h \Big( p_j^1 \big( K , m , s t_m^1 (K) \big) \Big) &= p_j^1 \big( L , \mu (m) , s t_{\mu (m)}^1 (L) \big) \, , & j &\in \Lambda , \, m \in \N , \, s \in I \, ; \\
    h \Big( p_j^2 \big( K , m , s t_m^2 (K) \big) \Big) &= p_j^2 \big( L , \nu (m) , s t_{\nu (m)}^2 (L) \big) \, , & j &\in \Gamma , \, m \in \N , \, s \in I .
\end{align*}
Again, it is straightforward to verify that $h$ is an onto homeomorphism.
\claimend
\end{claimproof}

Conversely, assuming that $\varphi (K)$ is homeomorphic to $\varphi (L)$, let us show that $(K,L) \in R_n$.

\begin{claim}\label{yyy}
If $\varphi (K)$ is homeomorphic to $\varphi (L)$, then $(K,L) \in R_n$.
\end{claim}

\begin{claimproof}
Let $h \colon \varphi (K) \to \varphi (L)$ be an onto homeomorphism. For every $Z \in X$ and every $i \in \{ 1,2 \}$ denote
\begin{equation*}
    D_Z^i = \begin{cases}
    \big\lbrace f_m^i (Z) \, ; \, m \in \N \big\rbrace & \textup{if } Z \in \pi_X(B_i) \\
    \emptyset & \textup{if } Z \notin \pi_X(B_i).
\end{cases} \ \ 
\end{equation*}
As $n>1$, it is easy to see that the set $D_Z^i \times \{ 0 \}$ is equal to
\[ \big\lbrace w \in \varphi (Z) \, ; \textup{ the set } \varphi (Z) \setminus \{ w \} \textup{ has exactly } i+2 \textup{ connected components}  \big\rbrace \]
for every $Z \in X$ and every $i \in \{ 1,2 \}$. Hence, as $h$ is a homeomorphism,
\[ h \big( D_K^i \times \{ 0 \} \big) = D_L^i \times \{ 0 \} \, , \ \ \ i \in \{ 1,2 \} . \tag{i} \]
Moreover, for every $~{ i \in \{ 1,2 \} }$, every $Z \in \pi_X(B_i)$ and every $m \in \N$ it follows from the construction of the set $\varphi (Z)$ that the set
\[ \varphi (Z) \setminus \Big( P_m^i (Z) \cup \big\lbrace \big( f_m^i (Z) , 0 \big) \big\rbrace \Big) \]
is the only connected component of $~{\varphi (Z) \setminus \big\lbrace \big( f_m^i (Z) , 0 \big) \big\rbrace}$ which is not homeomorphic to $(0,1]$. Thanks to this and (i), it is easy to see that \[ h \big( P^i (K) \big) = P^i (L)  \, , \ \ \ i \in \{ 1,2 \} . \tag{ii}\]
Also, as $D_Z^2$ is dense in $\Int (Z)$ for every $Z \in X$, it follows from (i) that
\[ h \big( \overline{ \Int (K) } \times \{ 0 \} \big) = \overline{ \Int (L) } \times \{ 0 \} . \tag{iii} \]
Since $D_Z^1 \cup D_Z^2$ is dense in $\big( (0,1)^n \setminus Z \big) \cup \Int (Z)$ and $\big( (0,1)^n \setminus Z \big) \cup \Int (Z)$ is dense in $[0,1]^n$ for every $Z \in X$, we have by (i) that
\[ h \big( [0,1]^n \times \{ 0 \} \big) = [0,1]^n \times \{ 0 \} . \tag{iv} \]
Combining (ii) and (iv), we receive
\[ h \Big( \big( [0,1]^n \times \{ 0 \} \big) \cup P^1 (K) \cup P^2 (K) \Big) = \big( [0,1]^n \times \{ 0 \} \big) \cup P^1 (L) \cup P^2 (L), \]
which implies (as $h$ is a bijection) that
\[ h \big( \partial K \times (0,1] \big) = \partial L \times (0,1] . \]
Therefore, as $h$ is a homeomorphism,
\[ h \big( \partial K \times [0,1] \big) = \partial L \times [0,1] . \]
This equality combined with (iv) yields
\[ h \big( \partial K \times \{ 0 \} \big) = \partial L \times \{ 0 \} . \tag{v} \]
Finally, since $\overline{\Int (K)} \cup \partial K = K$ and $\overline{\Int (L)} \cup \partial L = L$, we conclude by (iii) and (v) that
\[ h \big( K \times \{ 0 \} \big) = L \times \{ 0 \} . \tag{vi} \]
Clearly, it follows from (iv) and (vi) that $(K,L) \in R_n$.\claimend
\end{claimproof}

It remains to prove that $\varphi$ is Borel measurable.

\begin{claim}\label{zzz}
The mapping $\varphi$ is Borel measurable.
\end{claim}
\begin{claimproof}
For every $k \in \N$ define a mapping $\varphi_k \colon X \to \mathcal{K} \big( [0,1]^{n+1} \big)$ by
\[ \varphi_k (K) = \big( [0,1]^n \times \{ 0 \} \big) \cup \big( \partial K \times (0,1] \big) \cup \bigcup_{i=1}^{2} \bigcup_{m=1}^{k} P_m^i (K) \, , \ \ \ K \in X . \]
It is easy to see that $\varphi$ is the pointwise limit of the sequence $( \varphi_k )_{k=1}^{\infty}$. Hence, it suffices to prove that each of the mappings $\varphi_k$, $k \in \N$, is Borel measurable. Let $k \in \N$ be arbitrary. For every $i \in \{ 1,2 \}$ and every $m \in \N$ with $m \leq k$ define a mapping $\psi_m^i \colon X \to \mathcal{K} \big( [0,1]^{n+1} \big)$ by
\[ \psi_m^i (K) = \overline{P_m^i (K)} \, , \ \ \ K \in X . \]
Also, define a mapping $\psi \colon X \to \mathcal{K} \big( [0,1]^{n+1} \big)$ by
\[ \psi (K) = \partial K \times [0,1] \, , \ \ \ K \in X . \]
Clearly, we have
\[ \varphi_k (K) = \big( [0,1]^n \times \{ 0 \} \big) \cup \psi (K) \cup \bigcup_{i=1}^{2} \bigcup_{m=1}^{k} \psi_m^i (K) \]
for every $K \in X$. Hence, by Lemma \ref{UnionIsCts}, it is enough to show that each of the mappings $\psi$ and $\psi_m^i$, $i \in \{ 1,2 \}$, $m \in \{ 1 , \dotsc , k \}$, is Borel measurable. It is clear that the mapping from $\mathcal{K} \big( [0,1]^n \big)$ to $\mathcal{K} \big( [0,1]^{n+1} \big)$ given by $K \mapsto K \times [0,1]$ is continuous. Hence, by Lemma \ref{BorelMeas}, the mapping $\psi$ is Borel measurable. Finally, given $i \in \{ 1,2 \}$ and $m \in \{ 1 , \dotsc , k \}$, let us show that $\psi_m^i$ is Borel measurable. Since the set $\pi_X(B_i)$ is Borel and $\psi_m^i (K) = \emptyset$ for every $K \in X \setminus \pi_X(B_i)$, it suffices to prove that the restriction of $\psi_m^i$ to $\pi_X(B_i)$ is Borel measurable. Clearly, for every $K \in \pi_X(B_i)$ we have
\[ \psi_m^i (K) = P_m^i (K) \cup \big\lbrace \big( f_m^i (K) , 0 \big) \big\rbrace = \bigcup_{j=1}^{i+1} \big\lbrace p_j^i (K,m,t) \, ; \ 0 \leq t \leq t_m^i (K) \big\rbrace . \]
Hence, thanks to Lemma \ref{UnionIsCts}, it is enough to show that the mapping from $\pi_X(B_i)$ to $\mathcal{K} \big( [0,1]^{n+1} \big)$ given by $K \mapsto \big\lbrace p_j^i (K,m,t) \, ; \ 0 \leq t \leq t_m^i (K) \big\rbrace$ is Borel measurable for every $j \in \{ 1 , \dotsc , i+1 \}$. Let $j \in \N$ with $j \leq i+1$ be arbitrary and denote by $T$ the mapping $K \mapsto \big\lbrace p_j^i (K,m,t) \, ; \ 0 \leq t \leq t_m^i (K) \big\rbrace$, $K \in \pi_X(B_i)$. Then we have
\[ T(K) = \Big\lbrace \big( t p_j + (1-t) f_m^i (K) , \, t \big) \, ; \ 0 \leq t \leq t_m^i (K) \Big\rbrace \, , \ \ \ K \in \pi_X(B_i) . \]
Clearly, the mapping from $[0,1]^n \times [0,1]$ to $\mathcal{K} \big( [0,1]^{n+1} \big)$ given by
\[ (x, s) \mapsto \Big\lbrace \big( t p_j + (1-t) x , \, t \big) \, ; \ 0 \leq t \leq s \Big\rbrace \]
is continuous. Moreover, $f_m^i$ is Borel measurable and it is straightforward to verify that the mapping from $\pi_X(B_i)$ to $[0,1]$ given by $K \mapsto t_m^i (K)$ is Borel measurable as well. Therefore, $T$ is Borel measurable and the proof is finished.\claimend
\end{claimproof}
\end{proof}

In order to prove that for every natural number $n$ the homeomorphism ER of compacta in $[0,1]^n$ is Borel reducible to the homeomorphism ER of $n$-dimensional continua in $[0,1]^{n+1}$, we will need a few lemmata.

\begin{lemma}\label{WLOG dim=n}
Let $n \in \N$. The homeomorphism equivalence relation of compacta in $[0,1]^n$ is continuously reducible to the homeomorphism equivalence relation of compacta with nonempty interior in $(0,1)^n$.
\end{lemma}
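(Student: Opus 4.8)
The plan is to attach to $K$ a fixed solid cube, so that the result has nonempty interior, while keeping enough rigidity to recover $K$ up to homeomorphism. Fix two disjoint closed cubes $Q_1,Q_2\subseteq(0,1)^n$, each affinely homeomorphic to $[0,1]^n$, fix an affine homeomorphism $\iota\colon[0,1]^n\to Q_1$, and define $\Psi\colon\mathcal K\big([0,1]^n\big)\to\mathcal K\big((0,1)^n\big)$ by $\Psi(K)=\iota(K)\cup Q_2$. Then $\Psi(K)$ is a compact subset of $(0,1)^n$, and its interior (in $\R^n$) contains $\Int(Q_2)\ne\emptyset$, so $\Psi$ takes values among the compacta with nonempty interior. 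Since $K\mapsto\iota(K)$ is continuous (a homeomorphism induces a homeomorphism of hyperspaces) and $Q_2$ is constant, Lemma~\ref{UnionIsCts} shows that $\Psi$ is continuous. It remains to prove that $\Psi$ is a reduction.

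If $K$ and $L$ are homeomorphic, conjugating a homeomorphism $K\to L$ by $\iota$ and extending it by the identity on $Q_2$ (legitimate because $\iota(K)$ and $Q_2$, resp.\ $\iota(L)$ and $Q_2$, are disjoint compact, hence clopen, pieces of $\Psi(K)$, resp.\ $\Psi(L)$) gives a homeomorphism $\Psi(K)\to\Psi(L)$. For the converse I would isolate a cancellation fact: \emph{if $X$, $Y$ are metrizable compacta and $C$ is a connected metrizable compactum with $X\sqcup C$ homeomorphic to $Y\sqcup C$, then $X$ is homeomorphic to $Y$.} Since $\Psi(K)$ is the topological sum $\iota(K)\sqcup Q_2$ and $Q_2$ is a (connected, compact) cube, applying this with $C=Q_2$, $X=\iota(K)$, $Y=\iota(L)$ yields $\iota(K)\cong\iota(L)$, i.e.\ $K\cong L$, whenever $\Psi(K)\cong\Psi(L)$.

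To prove the cancellation fact, let $h\colon X\sqcup C_X\to Y\sqcup C_Y$ be a homeomorphism, where $C_X,C_Y$ are the two copies of $C$. As $C_X$ is clopen and connected, $h(C_X)$ is a nonempty clopen connected subset of $Y\sqcup C_Y$, hence (since $Y$ and $C_Y$ are disjoint clopen) it lies entirely in $C_Y$ or entirely in $Y$. If $h(C_X)\subseteq C_Y$, then connectedness of $C_Y$ forces $h(C_X)=C_Y$, so $h$ restricts to a homeomorphism $X\to Y$. If $h(C_X)\subseteq Y$, write $D:=h(C_X)$; this is a clopen connected subset of $Y$ with $D\cong C$, so $Y$ is the topological sum $D\sqcup(Y\setminus D)$, and $h$ restricts to a homeomorphism $X\to(Y\setminus D)\sqcup C_Y$; hence $X\cong(Y\setminus D)\sqcup C\cong(Y\setminus D)\sqcup D=Y$.

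The point that needs care is precisely this backward direction: a homeomorphism $\Psi(K)\to\Psi(L)$ need not map the cube summand $Q_2$ onto $Q_2$ — this can genuinely fail, e.g.\ when $K$ has a clopen cell component — so one cannot naively restrict it, and the cancellation fact is what absorbs this ambiguity. The remaining verifications (the interior condition, continuity via Lemma~\ref{UnionIsCts}, and the forward direction) are routine.
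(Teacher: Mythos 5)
Your proof is correct, and it takes a somewhat different route from the paper in the converse direction. Both proofs attach a fixed connected compact ``anchor'' to (a homeomorphic copy of) $K$ to produce nonempty interior, and both handle the forward direction by conjugating a homeomorphism $K\to L$ and extending by the identity on the anchor. The two proofs diverge in the backward direction. The paper's proof works concretely: given a homeomorphism $g\colon\Phi(K)\to\Phi(L)$, it observes that the anchor $\overline U$ is a clopen connected component of both sides, so $g(\overline U)$ and $\overline U$ are either equal or disjoint, and in the disjoint case it directly writes down a map $\widetilde g$ (equal to $g$ off $g^{-1}(\overline U)$ and to $g\circ g$ on $g^{-1}(\overline U)$) that swaps the two ``$C$-pieces'' and restricts to a homeomorphism $h(K)\to h(L)$. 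Your proof isolates exactly this phenomenon as a standalone cancellation lemma -- if $X\sqcup C\cong Y\sqcup C$ with $C$ a connected compactum then $X\cong Y$ -- and your proof of that lemma (the clopen connected image $h(C_X)$ lands entirely in $C_Y$ or entirely in $Y$; treat the two cases) is a clean rephrasing of the same case analysis. The cancellation lemma is a bit more transparent and reusable; the paper's construction of $\widetilde g$ is more explicit but does the same work. Your setup is also marginally simpler -- fixing two disjoint sub-cubes of $(0,1)^n$ rather than a global homeomorphism $h\colon\R^n\to(0,1)^n$ together with a carefully chosen open connected $U$ avoiding $h([0,1]^n)$. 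One small point the paper records which you omit: the target class (compacta in $(0,1)^n$ with nonempty interior) should be noted to be Borel -- it is $F_\sigma$ -- in $\mathcal K\big((0,1)^n\big)$, so that the homeomorphism ER on it fits the paper's framework of ERs on standard Borel spaces.
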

\begin{proof}
Denote by $X$ the space of all the compact subsets of $(0,1)^n$ which have nonempty interior. It is easy to show that $X$ is an $F_{\sigma}$ set in $\mathcal{K} \big( (0,1)^n \big)$. In particular, as a Borel subset of a Polish space, $X$ forms a standard Borel space. Let $h \colon \R^n \to (0,1)^n$ be an onto homeomorphism. Apparently, the set $(0,1)^n \setminus h \big( [0,1]^n \big)$ is nonempty and open. Choose and fix an open connected set $U \subseteq \R^n$ such that
\[ \emptyset \neq U \subseteq \overline{U} \subseteq (0,1)^n \setminus h \big( [0,1]^n \big). \label{inclusions} \tag{$\star$} \]
Clearly, $\overline{U} \in X$. It follows that the mapping $\Phi \colon \mathcal{K} \big( [0,1]^n \big) \to X$ given by $\Phi (K) = h(K) \cup \overline{U}$ is well-defined. Moreover, since it is easy to see that the mapping from $\mathcal{K} \big( [0,1]^n \big)$ to $\mathcal{K} \big( (0,1)^n \big)$ given by $K \mapsto h(K)$ is continuous, it follows from Lemma \ref{UnionIsCts} that $\Phi$ is continuous as well. Let us show that $\Phi$ is the desired reduction. Let $K,L \in \mathcal{K} \big( [0,1]^n \big)$ be homeomorphic and let $f \colon K \to L$ be an onto homeomorphism. It follows from \eqref{inclusions} that both of the sets $h(K)$ and $h(L)$ are disjoint from $\overline{U}$. This allows us to define a bijection $g \colon \Phi (K) \to \Phi (L)$ by
\begin{equation*}
    g(x) = \begin{cases}
    h \big( f \big( h^{-1}(x) \big) \big) & \textup{if } x \in h(K) \\
    x & \textup{if } x \in \overline{U}.
    \end{cases}
\end{equation*}
Clearly, $g$ is continuous. Hence, as $g$ is a bijection and $\Phi (K)$ is compact, we conclude that $g$ is a homeomorphism between $\Phi (K)$ and $\Phi (L)$.

Conversely, given $K,L \in \mathcal{K} \big( [0,1]^n \big)$ with $\Phi (K)$ homeomorphic to $\Phi (L)$, let us prove that $K$ is homeomorphic to $L$. Let $g \colon \Phi (K) \to \Phi (L)$ be an onto homeomorphism. By \eqref{inclusions} and the definition of $\Phi$, the set $\overline{U}$ is relatively clopen in both $\Phi (K)$ and $\Phi (L)$. Moreover, since $U$ is nonempty and connected, the set $\overline{U}$ is a connected component of both $\Phi (K)$ and $\Phi (L)$. As $g$ is a homeomorphism, it follows that $g \big( \overline{U} \big)$ is a connected component of $\Phi (L)$ and it is relatively clopen in $\Phi (L)$. Similarly, the set $g^{-1} \big( \overline{U} \big)$ is a connected component of $\Phi (K)$ and it is relatively clopen in $\Phi (K)$. As connected components of $\Phi (L)$, the sets $\overline{U}$ and $g \big( \overline{U} \big)$ are either equal or disjoint. Hence, the mapping $\widetilde{g} \colon h(K) \to h(L)$ given by
\begin{equation*}
     \widetilde{g} (x) = \begin{cases}
    g(x) & \textup{if } x \in h(K) \setminus g^{-1} \big( \overline{U} \big) \\
    g \big( g(x) \big) & \textup{if } x \in h(K) \cap g^{-1} \big( \overline{U} \big)
    \end{cases}
\end{equation*}
is a well-defined bijection. Let $A:=h(K) \setminus g^{-1} \big( \overline{U} \big)$, $B:=h(K) \cap g^{-1} \big( \overline{U} \big)$. Since $g^{-1} \big( \overline{U} \big)$ is relatively clopen in $\Phi (K)$, both of the sets $A$ and $B$ are relatively open in $h(K)$. Hence, as both of the mappings $\widetilde{g} \restriction_A$ and $\widetilde{g} \restriction_B$ are (obviously) continuous, it follows that $\widetilde{g}$ is continuous. Therefore, since $h(K)$ is compact and $\widetilde{g}$ is a bijection, we conclude that $\widetilde{g}$ is a homeomorphism between $h(K)$ and $h(L)$. In particular, $h(K)$ and $h(L)$ are homeomorphic, which implies (as $h$ is a homeomorphism) that $K$ is homeomorphic to $L$.
\end{proof}

\begin{lemma}\label{CtblSetToBoundary}
Let $n \in \N$ and denote by $X$ the space of all the compact subsets of $(0,1)^n$ which have nonempty interior. There exist Borel measurable mappings $f_k \colon X \to (0,1)^n$, $k \in \N$, such that:
\begin{enumerate}[label=(\roman*),font=\textup]
    \item $\forall \, K \in X \ \, \forall \, k \in \N : f_k (K) \in (0,1)^n \setminus K ;$
    \item $\forall \, K \in X : \textup{the sequence } \big( f_k(K) \big)_{k=1}^{\infty} \textup{ is injective} ;$
    \item $\forall \, K \in X : \overline{\big\lbrace f_k(K) \, ; \, k \in \N \big\rbrace} = \big\lbrace f_k(K) \, ; \, k \in \N \big\rbrace \cup \partial K ;$
    \item $\forall \, K \in X \ \, \forall \, l \in \N : \textup{the point } f_l (K) \textup{ is isolated in } \overline{\big\lbrace f_k(K) \, ; \, k \in \N \big\rbrace} .$
\end{enumerate}
\end{lemma}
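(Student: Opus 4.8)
The plan is to attach to every $K\in X$ a canonical countably infinite set $S(K)\subseteq(0,1)^n\setminus K$ which is relatively discrete and satisfies $\overline{S(K)}=S(K)\cup\partial K$, in such a way that the relation $\{(K,x)\,;\,x\in S(K)\}$ is Borel; an application of Proposition~\ref{uniformization} then delivers the maps $f_k$. For the construction, for $m\in\N$ let $\mathcal Q_m$ be the family of closed dyadic cubes of side $2^{-m}$ contained in $(0,1)^n$, and for $Q\in\mathcal Q_m$ let $c_Q$ be its centre and $cQ$ the cube concentric with $Q$ scaled by a fixed large absolute constant $c$. Call $Q$ \emph{good for} $K$ if $Q\cap K=\emptyset$, $cQ\cap K\neq\emptyset$ and $cQ\subseteq(0,1)^n$, and put $S(K):=\{c_Q\,;\,Q\in\bigcup_{m}\mathcal Q_m\text{ is good for }K\}$.

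The topological heart is to verify three points. (a) $S(K)\subseteq(0,1)^n\setminus K$: immediate, since $c_Q\in Q$ and $Q\cap K=\emptyset$. (b) $\partial K\subseteq\overline{S(K)}$: fix $q\in\partial K$ and $\varepsilon>0$; since $q\in\partial K$ there are points $y\in B(q,\varepsilon)\setminus K$ arbitrarily close to $q$, and for such a $y$ let $m(y)$ be the least integer with $\sqrt n\,2^{-m}\leq\tfrac12\,\mathrm{dist}(y,K)$; a dyadic cube $Q$ of side $2^{-m(y)}$ with $y\in Q$ then has $\mathrm{dist}(z,K)\geq\tfrac12\,\mathrm{dist}(y,K)>0$ for every $z\in Q$, so $Q\cap K=\emptyset$, while $\mathrm{dist}(c_Q,K)\leq\mathrm{dist}(y,K)+\mathrm{diam}(Q)$ is of order $2^{-m(y)}$, so $cQ\cap K\neq\emptyset$ once $c$ was chosen large enough and $cQ\subseteq(0,1)^n$ once $y$ is close enough to $q$; letting $y\to q$ forces $m(y)\to\infty$, and since distinct dyadic levels produce distinct dyadic centres this yields infinitely many points of $S(K)$ inside $B(q,\varepsilon)$. (c) Every accumulation point of $S(K)$ lies in $\partial K$: a point of $\Int K$ has a neighbourhood contained in $K$, hence disjoint from $S(K)$; if $q\notin K$ then $\mathrm{dist}(q,K)>0$, and if $q\in\partial(0,1)^n$ then $\mathrm{dist}(q,K)\geq\mathrm{dist}\big(K,\partial(0,1)^n\big)>0$ again, so in both cases the requirements $cQ\cap K\neq\emptyset$ and $cQ\subseteq(0,1)^n$ bound the levels $m$ of the good cubes $Q$ whose centres lie near $q$, and as each $\mathcal Q_m$ is finite only finitely many points of $S(K)$ lie in a fixed neighbourhood of $q$; points outside $[0,1]^n$ are excluded since $S(K)\subseteq(0,1)^n$. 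Since $K$ is a compact, hence closed, proper subset of the connected set $(0,1)^n$ with nonempty interior, $\partial K\neq\emptyset$, so by (b) the set $S(K)$ is infinite; by (a)--(c) we get $\overline{S(K)}=S(K)\cup\partial K$, and as the closed set $\partial K$ is disjoint from $S(K)$, no element of $S(K)$ is an accumulation point of $S(K)$ -- hence $S(K)$ is relatively discrete and each of its points is isolated in $\overline{S(K)}$.

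For Borelness, the set $B:=\{(K,x)\in X\times(0,1)^n\,;\,x\in S(K)\}$ is the countable union, over $m\in\N$ and over the finitely many $Q\in\mathcal Q_m$ with $cQ\subseteq(0,1)^n$, of the sets $\{c_Q\}\times\{K\in X\,;\,K\cap Q=\emptyset,\ K\cap cQ\neq\emptyset\}$; each $\{K\,;\,K\cap Q=\emptyset\}$ is open and each $\{K\,;\,K\cap cQ\neq\emptyset\}$ is closed in $\mathcal K\big((0,1)^n\big)$ by the definition of the Vietoris topology, so $B$ is Borel. Its vertical sections are the sets $S(K)$, which are countably infinite, hence infinite and $\sigma$-compact, and $\pi_X(B)=X$. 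Proposition~\ref{uniformization} then yields Borel maps $f_k\colon X\to(0,1)^n$ with $(K,f_k(K))\in B$, with $\{f_k(K)\,;\,k\in\N\}$ dense in $S(K)$, and with $\big(f_k(K)\big)_k$ injective. As $S(K)$ is discrete, denseness forces $\{f_k(K)\,;\,k\in\N\}=S(K)$, and (i)--(iv) are precisely the properties of $S(K)$ established above.

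The step I expect to be delicate is (b): one must choose the dilation factor $c$ and, for each near-boundary point $y$, the level $m(y)$ so that the dyadic cube around $y$ is at once \emph{disjoint} from $K$ (which forces $2^{-m(y)}$ small relative to $\mathrm{dist}(y,K)$) and has its $c$-dilate \emph{meeting} $K$ (which forces $2^{-m(y)}$ not too small relative to $\mathrm{dist}(y,K)$); reconciling these with one fixed $c$ while $\mathrm{dist}(y,K)\to0$, and keeping the dilate inside $(0,1)^n$, is the crux of the argument.
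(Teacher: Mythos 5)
Your argument is correct and shares the paper's overall strategy: attach to each $K$ a canonical countably infinite discrete set $S(K)\subseteq(0,1)^n\setminus K$ with $\overline{S(K)}=S(K)\cup\partial K$, then extract Borel selectors enumerating $S(K)$. The implementations differ in two ways. To build $S(K)$, the paper fixes $2^{-k}$-nets $M_k\subseteq(0,1)^n$ and takes $S(K)=\bigcup_k\{z\in M_k\setminus K:\mathrm{dist}(z,K)<3\cdot 2^{-k}\}$, while you use a Whitney-type selection of dyadic cubes disjoint from $K$ whose fixed $c$-dilate meets $K$. Both enforce the same two-sided control (off $K$, but within a mesh-scale distance of $K$); the delicate step (b) that you flag --- reconciling the two constraints via the minimality of $m(y)$ so that a single dilation factor $c$ depending only on $n$ works uniformly --- is precisely the kind of estimate the paper makes with its factor $3$, and your bounds do close the gap. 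To extract the $f_k$, the paper constructs them explicitly from a fixed bijection $\mu\colon\N\to\bigcup_k M_k$ (picking out the elements of $S(K)$ in the order given by $\mu$) and verifies Borel measurability by hand, whereas you invoke Proposition~\ref{uniformization}; the one extra observation you correctly supply --- that the proposition only promises density in the vertical section $S(K)$, which in a discrete set forces equality --- is exactly what makes this shortcut legitimate. Your route is a little slicker by reusing the selection machinery; the paper's is more self-contained.
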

\begin{proof}
For every $k \in \N$ choose and fix a finite set $M_k \subseteq (0,1)^n$ such that $M_k$ is a $2^{-k}$-net for $(0,1)^n$. For every $K \in X$ and every $k \in \N$ let
\[ S_k (K) := \big\lbrace z \in M_k \setminus K \, ; \ \mathrm{dist} (z,K) < 3 \cdot 2^{-k} \big\rbrace . \]
Moreover, for every $K \in X$ define $\displaystyle S(K) := \bigcup_{k=1}^{\infty} S_k (K)$ .

\begin{claim}\label{ClosureOfS(K)}
For every $K \in X$ we have $\overline{S(K)} = S(K) \cup \partial K$.
\end{claim}
\begin{claimproof}
Let $K \in X$ be arbitrary. Since $S(K)$ is disjoint from $K$, it follows that $\overline{S(K)} \cap \Int (K) = \emptyset$. Hence, in order to prove that $\overline{S(K)} \subseteq S(K) \cup \partial K$ it suffices to show that $\overline{S(K)} \setminus K \subseteq S(K)$. Let $x \in \overline{S(K)} \setminus K$ be given. There is $m \in \N$ such that $3 \cdot 2^{-m} < \mathrm{dist} (x,K)$. Denote
\[ A:= \bigcup_{k=1}^m S_k(K) \, , \ \ \ B:= \bigcup_{k=m}^{\infty} S_k(K). \]
It is clear that $A \cup B = S(K)$ and $\mathrm{dist} (z,K) < 3 \cdot 2^{-m}$ for every $z \in B$. In particular, $\mathrm{dist} (z,K) \leq 3 \cdot 2^{-m} < \mathrm{dist} (x,K)$ for every $z \in \overline{B}$. Hence, $x \notin \overline{B}$. Therefore, as $x \in \overline{S(K)} = \overline{A \cup B} = \overline{A} \cup \overline{B}$, we conclude that $x \in \overline{A}$. However, since $S_k (K) \subseteq M_k$ and $M_k$ is finite for every $k \in \N$, it follows that $A$  is finite and $\overline{A} = A$. Thus, $x \in A \subseteq S(K)$.

Now let us show that $S(K) \cup \partial K \subseteq \overline{S(K)}$. Obviously, we just need to prove that $\partial K \subseteq \overline{S(K)}$. Given $x \in \partial K$ and $\varepsilon > 0$, we are going to show that there exists $z \in S(K)$ such that $\lVert z-x \rVert < \varepsilon$. Let $\delta := \frac{1}{2} \mathrm{min} \lbrace 1 , \varepsilon \rbrace$. Since $x \in \partial K$ and $\overline{K} = K \subseteq (0,1)^n$, there is $y \in (0,1)^n \setminus K$ with $\lVert y-x \rVert < \delta$. Let
\[ m:= \mathrm{min} \big\lbrace k \in \N \, ; \ \mathrm{dist} (y,K) > 2^{-k} \big\rbrace . \]
Since $\mathrm{dist} (y,K) \leq \lVert y-x \rVert < \delta \leq \frac{1}{2}$, it follows that $m > 1$. Hence, by the definition of $m$,
\[ 2^{-m} < \mathrm{dist} (y,K) \leq 2^{-(m-1)} . \tag{$\star$} \]
As $M_m$ is a $2^{-m}$-net for $(0,1)^n$, there exists $z \in M_m$ with $\lVert z-y \rVert < 2^{-m}$. The triangle inequality together with ($\star$) yields
\begin{align*}
    \lVert z-x \rVert &\leq \lVert z-y \rVert + \lVert y-x \rVert < 2^{-m} + \delta \\
    &< \mathrm{dist} (y,K) + \delta \leq \lVert y-x \rVert + \delta < 2 \delta \leq \varepsilon.
\end{align*}
Moreover, again by the triangle inequality and ($\star$),
\[ \mathrm{dist} (z,K) \geq \mathrm{dist} (y,K) - \lVert z-y \rVert > 2^{-m} - 2^{-m} =0 , \]
which implies that $z \notin K$. Hence, $z \in M_m \setminus K$. Finally, applying the triangle inequality and ($\star$) once again, we obtain
\[ \mathrm{dist} (z,K) \leq \mathrm{dist} (y,K) + \lVert z-y \rVert < 2^{-(m-1)} + 2^{-m} = 3 \cdot 2^{-m}, \]
which shows that $z \in S_m (K)$. In particular, $z \in S(K)$.\claimend
\end{claimproof}
Denote by $M$ the union of the sets $M_k$, $k \in \N$. It is clear that $M$ is countably infinite. Let $\mu \colon \N \to M$ be an arbitrary bijection. For every $j \in \N$ and every $K \in X$ let $A_j(K) := \big\lbrace i \in \N \, ; \ i \leq j , \, \mu(i) \in S(K)  \big\rbrace$. Moreover, since it is clear that $S(K)$ is an infinite subset of $M$ for every $K \in X$, the mappings ${\alpha}_k \colon X \to \N$, $k \in \N$, given by
\[ {\alpha}_k (K) = \mathrm{min} \big\lbrace j \in \N \, ; \ | A_j (K) | = k \big\rbrace \]
are well-defined. Finally, for every $k \in \N$ define a mapping $f_k \colon X \to (0,1)^n$ by $f_k (K) = \mu \big( {\alpha}_k (K) \big)$. It is easy to see that
\[ \big\lbrace {\alpha}_k (K) \, ; \ k \in \N \big\rbrace = \big\lbrace m \in \N \, ; \ \mu (m) \in S(K) \big\rbrace \]
for every $K \in X$. Moreover, since $\mu$ is a bijection and $S(K) \subseteq M$, we have $\mu \big( \big\lbrace {\alpha}_k (K) \, ; \ k \in \N \big\rbrace \big) = S(K)$ for every $K \in X$. Therefore,
\[ \big\lbrace f_k (K) \, ; \ k \in \N \big\rbrace = \big\lbrace \mu \big( {\alpha}_k (K) \big) \, ; \ k \in \N \big\rbrace = S(K) \]
for every $K \in X$. In particular, the condition (i) is satisfied and, by Claim \ref{ClosureOfS(K)}, so is (iii). Also, since it is clear that the sequence $\big( {\alpha}_k (K) \big)_{k=1}^{\infty}$ is injective for every $K \in X$, it follows from the injectivity of $\mu$ that (ii) is satisfied as well. Moreover, it easily follows from the finiteness of the sets $M_k$, $k \in \N$, and from the definition of the sets $S_k (K)$, $k \in \N$, $K \in X$, that for every $K \in X$ and every $x \in S(K)$ the point $x$ is isolated in $\overline{S(K)}$. Hence, the assertion (iv) is satisfied. It remains to prove that each of the mappings $f_k$, $k \in \N$, is Borel measurable. Equip $\N$ with the discrete topology and $M$ with the subspace topology inherited from $(0,1)^n$. Then (trivially) the mapping $\mu$ is continuous. Hence, it suffices to show that each of the mappings ${\alpha}_k$, $k \in \N$, is Borel measurable. Let $k \in \N$ be arbitrary. Since $\N$ is a countable discrete space, we just need to prove that $\big\lbrace K \in X \, ; \ {\alpha}_k (K) = m \big\rbrace$ is a Borel subset of $X$ for every $m \in \N$. Let $m \in \N$ be given and denote $\Gamma := \big\lbrace P \subseteq \{ 1 , \dotsc , m \} \, ; \ |P|=k \, , \ m \in P \big\rbrace$. Clearly, $\Gamma$ is finite (and possibly empty). It is straightforward to verify that
\[ \big\lbrace K \in X \, ; \ {\alpha}_k (K) = m \big\rbrace = \bigcup_{P \in \Gamma} \, \bigcap_{i=1}^m \big\lbrace K \in X \, ; \ \mu (i) \in S(K) \iff i \in P \big\rbrace . \]
Therefore, we are done once we prove the following claim.
\begin{claim}
The set $\big\lbrace K \in X \, ; \ x \in S(K) \big\rbrace$ is Borel in $X$ for every $x \in (0,1)^n$.
\end{claim}
\begin{claimproof}
Let $x \in (0,1)^n$ be given. By the definition of $S(K)$, we have
\[ \big\lbrace K \in X \, ; \ x \in S(K) \big\rbrace = \bigcup_{l=1}^{\infty} \big\lbrace K \in X \, ; \ x \in S_l (K) \big\rbrace . \]
Hence, it suffices to show that each of the sets $\big\lbrace K \in X \, ; \ x \in S_l (K) \big\rbrace$, $l \in \N$, is Borel in $X$. Let $l \in \N$ be arbitrary. By the definition of $S_l (K)$, the set $\big\lbrace K \in X \, ; \ x \in S_l (K) \big\rbrace$ is equal to
\[ \big\lbrace K \in X \, ; \ x \in M_l \big\rbrace \cap \big\lbrace K \in X \, ; \ x \notin K \big\rbrace \cap \big\lbrace K \in X \, ; \ \mathrm{dist} (x,K) < 3 \cdot 2^{-l} \big\rbrace . \]
Clearly, the first of these three sets is clopen in $X$, the second one is $F_{\sigma}$ in $X$ and the third one is open in $X$. In particular, each of them is Borel. \claimend
\end{claimproof}
\end{proof}

\begin{theorem}\label{thmCompCont}
The homeomorphism equivalence relation of compacta in $[0,1]^n$ is Borel reducible to the homeomorphism equivalence relation of $n$-dimensional continua in $[0,1]^{n+1}$ for every $n\in\N$. In particular, we have $~{H_n\leq_B C_{n+1}}$ for every $n\in\N$.
\end{theorem}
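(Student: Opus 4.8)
The plan is to mimic the proof of Theorem~\ref{AmbientHomeos}, with two changes forced by the fact that an abstract homeomorphism $f\colon K\to L$ of compacta in $[0,1]^n$ need not be induced by a self-homeomorphism of $[0,1]^n$: the rigid floor $[0,1]^n\times\{0\}$ has to be replaced by the copy $K\times\{0\}$ itself, and the markers sitting over $(0,1)^n\setminus K$ have to be attached at points chosen \emph{canonically} (by Lemma~\ref{CtblSetToBoundary}) rather than matched up afterwards by an $\mathsf{SCDH}$ correction. First I would dispose of $n=1$ exactly as in Theorem~\ref{AmbientHomeos} ($H_1$ is Borel bireducible with $E_{S_\infty}$, which is the homeomorphism ER of dendrites in $[0,1]^2$, and dendrites are $1$-dimensional continua), so assume $n\ge2$. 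By Lemma~\ref{WLOG dim=n} it suffices to reduce the homeomorphism ER on $X:=\{K\in\mathcal K((0,1)^n)\,;\ \Int K\neq\emptyset\}$ to the homeomorphism ER on ${\mathsf C}_n([0,1]^{n+1})$. I fix the Borel maps $f^1_k\colon X\to(0,1)^n$ provided by Lemma~\ref{CtblSetToBoundary} (so $(f^1_k(K))_k$ is injective, lies in $(0,1)^n\setminus K$, each term is isolated in its closure, and that closure is $\{f^1_k(K)\,;\,k\in\N\}\cup\partial K$) and, applying Proposition~\ref{uniformization} to the Borel set $\{(K,y)\in X\times[0,1]^n\,;\ y\in\Int K\}$ of Lemma~\ref{BorelSets}, fix Borel maps $f^2_k\colon X\to(0,1)^n$ with $(f^2_k(K))_k$ injective and dense in $\Int K$.

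I would then set $\varphi(K)\subseteq[0,1]^{n+1}=\R^n\times\R$ equal to the union of: the copy $K\times\{0\}$; the wall $\partial K\times(0,1]$; a cone over $\partial K$ with apex a fixed point above everything, whose purpose is to keep $\varphi(K)$ connected even when $K$ is disconnected (every component $C$ of $K$ satisfies $\emptyset\neq\partial C\subseteq\partial K$, hence meets the wall, and the wall meets the cone); for each $m$, a simple triod attached to $\partial K\times\{0\}$ at one endpoint and branching near $(f^1_m(K),0)$; and for each $m$, a $3$-pronged star based at $(f^2_m(K),0)\in\Int K\times\{0\}$ --- all these gadgets shrinking fast and pairwise disjoint, defined just like the sets $P^i_m$ of Theorem~\ref{AmbientHomeos} and controlled by the obvious analogues of Claims~\ref{DiamGoesToZero3}, \ref{MarkersAreDisjoint1} and \ref{MarkersAreDisjoint2}. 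By Lemma~\ref{CtblSetToBoundary} their diameters tend to $0$ and they accumulate exactly onto $\partial K\times\{0\}$, so $\varphi(K)$ is compact by Lemma~\ref{ClosednessOfUnion}; it is path-connected by construction; and it is $n$-dimensional, being meagre in $\R^{n+1}$ and containing the $n$-dimensional set $K\times\{0\}$ (recall $\Int K\neq\emptyset$), by \cite[Corollary~1.8.4, Theorem~1.8.11]{EngelkingDimension}. Borel measurability of $\varphi$ is obtained as in Claim~\ref{zzz}: $\varphi$ is the pointwise limit of the finite approximations using only the first $k$ gadgets of each kind, and each of these is Borel by Lemmata~\ref{BorelMeas} and \ref{UnionIsCts} together with the Borel measurability of $K\mapsto\partial K$, of the $f^i_k$, and of the scaling parameters.

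For the backward implication, let $h\colon\varphi(K)\to\varphi(L)$ be a homeomorphism. As in Claim~\ref{yyy} (this is where $n\ge2$ is used), the two families of gadget base points are recognisable intrinsically from $\varphi(Z)$ --- $(f^1_m(Z),0)$ by having arbitrarily small triod neighbourhoods and $(f^2_m(Z),0)$ by having a neighbourhood homeomorphic to a copy of $\R^n$ with a triod attached at a point --- so $h$ carries $\{(f^1_m(K),0)\,;\,m\in\N\}$ onto $\{(f^1_m(L),0)\,;\,m\in\N\}$ and $\{(f^2_m(K),0)\,;\,m\in\N\}$ onto $\{(f^2_m(L),0)\,;\,m\in\N\}$. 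Passing to closures and using parts (iii)--(iv) of Lemma~\ref{CtblSetToBoundary} for the first family and the density of $(f^2_m)_m$ in the interior for the second, $h$ maps $\partial K\times\{0\}$ onto $\partial L\times\{0\}$ and $\overline{\Int K}\times\{0\}$ onto $\overline{\Int L}\times\{0\}$, whence $h(K\times\{0\})=h\big((\partial K\cup\overline{\Int K})\times\{0\}\big)=L\times\{0\}$ and $K$ is homeomorphic to $L$.

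The forward implication carries the main difficulty. Given a homeomorphism $f\colon K\to L$, the domain invariance theorem gives $f(\Int K)=\Int L$ and $f(\partial K)=\partial L$. A first adjustment, using that the open set $\Int L$ is $\mathsf{SCDH}$ (Lemma~\ref{SCDH}), modifies $f$ inside $\Int L$ so that moreover $\tilde f(\{f^2_m(K)\,;\,m\in\N\})=\{f^2_m(L)\,;\,m\in\N\}$ while $\tilde f=f$ on $\partial K$. The delicate step --- the intrinsic substitute for the $\mathsf{SCDH}$ argument that handled the $P^1$-markers in Theorem~\ref{AmbientHomeos} --- is to extend $f|_{\partial K}\colon\partial K\to\partial L$ to a homeomorphism of the decorated boundaries $\partial K\cup\{f^1_m(K)\,;\,m\in\N\}$ and $\partial L\cup\{f^1_m(L)\,;\,m\in\N\}$. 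This should be possible because the "dust" $\{f^1_m(K)\}$ was extracted from the fixed nets $M_k$ uniformly around $\partial K$, so that the homeomorphism type of the decorated boundary, as well as the extendability of boundary homeomorphisms across the dust, depend only on $\partial K$ up to homeomorphism; one realises the extension by a back-and-forth shuffling of the isolated dust points compatible with $f|_{\partial K}$. Granting this, $h\colon\varphi(K)\to\varphi(L)$ is assembled piecewise --- $\tilde f$ on $K\times\{0\}$, $\tilde f\times\mathrm{id}$ on the wall and the map it induces on the cone, the $\tilde f$-matching on the $3$-stars, and the dust matching (with prongs rescaled) on the triods --- it is a bijection onto $\varphi(L)$, it is continuous by the diameter estimates, and it is a homeomorphism because $\varphi(K)$ is compact. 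Together with Lemma~\ref{WLOG dim=n} this gives $H_n\leq_B C_{n+1}$. The step I expect to be the real obstacle is precisely the extension of an abstract boundary homeomorphism across the canonical dust: unlike in Theorem~\ref{AmbientHomeos} there is no ambient homeomorphism to begin with, and $f$ need not extend past $K$ at all, which is exactly what dictates the careful net-based construction in Lemma~\ref{CtblSetToBoundary}.
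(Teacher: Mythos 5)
Your overall plan matches the paper's quite closely: you invoke Lemma~\ref{WLOG dim=n} to pass to $X$, feed Lemma~\ref{BorelSets} into Proposition~\ref{uniformization} to get the interior-dense sequence, use Lemma~\ref{CtblSetToBoundary} for the canonical boundary ``dust,'' build $\varphi(K)$ out of $K\times\{0\}$, an apex-cone keeping things connected, and gadget markers at the two kinds of distinguished points, and then argue compactness, dimension, Borelness, and both implications exactly as in Theorem~\ref{AmbientHomeos}. The details of the embedded picture differ cosmetically from the paper's (the paper has no wall $\partial K\times(0,1]$; its cone $F(K)$ is taken over $\partial K\times\{0\}$ \emph{and} the dust points, not just $\partial K$; and it uses $2$-prong markers at the interior-dense points and $3$-prong markers at the dust, so the dust points become degree-$4$ branch points while interior marked points have an $n$-ball with two prongs --- your choice of triods and $3$-stars would need similar care to keep the two families intrinsically distinguishable and to make the attachment of the dust gadgets Borel-canonical, which is precisely what taking the cone over the dust achieves). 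None of this changes the essence of the argument.

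The one genuine gap is exactly the one you flag yourself. Your reduction hinges on showing that any homeomorphism $f\colon\partial K\to\partial L$ induced by a homeomorphism $K\to L$ extends to a homeomorphism of the ``decorated boundaries'' $\partial K\cup\{f^1_m(K)\}\to\partial L\cup\{f^1_m(L)\}$ mapping dust to dust. You write that this ``should be possible'' by a back-and-forth shuffle because the dust was chosen from the fixed nets $M_k$ ``uniformly around $\partial K$,'' but you do not prove it, and as stated it is not obvious: a back-and-forth on isolated points accumulating on a boundary needs a quantitative compatibility between the shuffled points and the given boundary map, otherwise continuity at $\partial K$ fails. The paper closes exactly this hole by citing \cite[Proposition~2]{KrupskiVejnar}, which is an existing result tailored to this situation (a homeomorphism between the derived sets of two countable compacta with isolated points extends across the isolated points). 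If you grant yourself that proposition, your proof is complete; without it, the forward implication is only a plausible sketch at the decisive step.
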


\begin{proof}
Let $n \in \N$ be given. By \cite[Theorem 4.2]{ChangGaoDim}, the equivalence relation $H_1$ is Borel bireducible with the isomorphism equivalence relation of countable graphs. Thus, repeating the arguments presented at the beginning of the proof of Theorem \ref{AmbientHomeos}, we conclude that $H_1$ is Borel reducible to the homeomorphism equivalence relation of $1$-dimensional continua in $[0,1]^2$. Hence, let us assume that $n > 1$. Let $X$ be the space of all the compact subsets of $(0,1)^n$ which have nonempty interior and denote $Y := \mathcal{K} \big( [0,1]^n \times [-1,1] \big)$. Clearly, the homeomorphism equivalence relation of $n$-dimensional continua in $[0,1]^{n+1}$ is continuously bireducible with the homeomorphism equivalence relation of $n$-dimensional continua in $[0,1]^n \times [-1,1]$. Using this fact together with Lemma \ref{WLOG dim=n}, we immediately see that it suffices to find a Borel measurable mapping $\Phi \colon X \to Y$ such that $\Phi$ is a reduction from the homeomorphism equivalence relation on $X$ to the homeomorphism equivalence relation on $Y$ and $\Phi(K)$ is $n$-dimensional and connected for every $K \in X$.

Let $B := \big\lbrace (K,y) \in X \times (0,1)^n \, ; \ y \in \Int (K) \big\rbrace$. It easily follows from Lemma \ref{BorelSets} that $B$ is Borel in $X \times (0,1)^n$. Moreover, for every $K \in X$ we have $\big\lbrace y \in (0,1)^n \, ; \ (K,y) \in B \big\rbrace = \Int (K)$. In particular, $B$ is a Borel set whose nonempty vertical sections are infinite and $\sigma$-compact. In addition, it immediately follows from the definitions of $X$ and $B$ that $\pi_X(B) = X$. Thus, by Proposition \ref{uniformization}, there exist Borel measurable mappings $f_k^1 \colon X \to (0,1)^n$, $k \in \N$, such that:
\begin{enumerate}[label=(\roman*),font=\textup]
    \item $\forall \, K\in X : \textup{the set } \big\lbrace f_k^1 (K) \, ; \, k\in\N \big\rbrace \textup{ is a dense subset of } \Int (K) ;$
    \item $\forall \, K\in X : \textup{the sequence } \big( f_k^1 (K) \big)_{k=1}^{\infty} \textup{ is injective} .$
\end{enumerate}
By Lemma \ref{CtblSetToBoundary}, there exist Borel measurable mappings $f_k^2 \colon X \to (0,1)^n$, $k \in \N$, such that:
\begin{enumerate}[label=(\arabic*),font=\textup]
    \item $\forall \, K \in X \ \, \forall \, k \in \N : f_k^2 (K) \in (0,1)^n \setminus K ;$
    \item $\forall \, K \in X : \textup{the sequence } \big( f_k^2 (K) \big)_{k=1}^{\infty} \textup{ is injective} \, ;$
    \item $\forall \, K \in X : \overline{\big\lbrace f_k^2 (K) \, ; \, k \in \N \big\rbrace} = \big\lbrace f_k^2 (K) \, ; \, k \in \N \big\rbrace \cup \partial K ;$
    \item $\forall \, K \in X \ \, \forall \, l \in \N : \textup{the point } f_l^2 (K) \textup{ is isolated in } \overline{\big\lbrace f_k^2 (K) \, ; \, k \in \N \big\rbrace} .$
\end{enumerate}
Fix an arbitrary point $p_0 \in [0,1]^n \times \{ 1 \}$ and for every $x \in (0,1)^n \times \{ 0 \}$ let
\[ P_0 (x) := \big\lbrace tx+(1-t)p_0 \, ; \ t \in [0,1] \big\rbrace . \]
For every $K \in X$ denote
\begin{align*}
    F_1 (K) &:= \bigcup \big\lbrace P_0 (x) \, ; \ x \in \partial K \times \{ 0 \} \big\rbrace , \\
    F_2 (K) &:= \bigcup \Big\lbrace P_0 (x) \, ; \ x \in \big\lbrace f_k^2 (K) \, ; \, k\in\N \big\rbrace \times \{ 0 \} \Big\rbrace , \\
    F(K) &:= F_1 (K) \cup F_2 (K).
\end{align*}
\begin{claim}\label{F(K)inY}
The set $F(K)$ is compact for every $K \in X$.
\end{claim}
\begin{claimproof}
Let $K \in X$ be given. By (3) and the definition of $F(K)$, we have
\[ F(K) = \bigcup \Big\lbrace P_0 (x) \, ; \ x \in \overline{\big\lbrace f_k^2 (K) \, ; \, k\in\N \big\rbrace} \times \{ 0 \} \Big\rbrace . \]
Hence, denoting $Z := \overline{\big\lbrace f_k^2 (K) \, ; \, k\in\N \big\rbrace} \times \{ 0 \}$, it follows that $F(K)$ is the image of $Z \times [0,1]$ under the mapping
\[ (x,t) \mapsto tx+(1-t)p_0 \, , \ \ (x,t) \in \R^{n+1} \times [0,1] . \]
Therefore, since this mapping is continuous and the set $Z \times [0,1]$ is compact, we are done.\claimend
\end{claimproof}
Choose and fix three distinct points $p_1 , p_2 , p_3 \in [0,1]^n \times \{ -1 \}$. Similarly as in the proof of Theorem \ref{AmbientHomeos}, it is possible to construct Borel measurable mappings $t_m^i \colon X \to (0,1)$, $i \in \{ 1,2 \}$, $m \in \N$, such that $\displaystyle \lim_{m \to \infty} t_m^i (K) = 0$ for all $i \in \{ 1,2 \}$ and $K \in X$ and the sets
\begin{align*}
    &P_m^i (K) := \Big\lbrace tp_j + (1-t) \big( f_m^i (K) , 0 \big) \, ; \ j \leq i+1 \, , \ 0 < t \leq t_m^i (K) \Big\rbrace \, ,\\
    &i \in \{ 1,2 \} \, , \ m \in \N \, , \ K \in X \, ,
\end{align*}
satisfy:
\begin{enumerate}[label=(\alph*),font=\textup]
    \item $\forall \, K \in X \ \, \forall \, i \in \{ 1,2 \} \ \, \forall \, k , l \in \N : \ k \neq l \implies P_k^i (K) \cap P_l^i (K) = \emptyset \, ;$
    \item $\forall \, K \in X \ \, \forall \, k \in \N : \, P_k^1 (K) \subseteq \Int (K) \times [-1,0) \, ;$
    \item $\forall \, K \in X \ \, \forall \, k \in \N : \, P_k^2 (K) \subseteq \big( [0,1]^n \setminus K \big) \times [-1,0) .$
\end{enumerate}
For every $i \in \{ 1,2 \}$ and every $K \in X$ let $\displaystyle P^i (K) := \bigcup_{m=1}^{\infty} P_m^i (K)$. 

\begin{claim}\label{BelongsToY}
The set $\big( K \times \{ 0 \} \big) \cup F(K) \cup P^1 (K) \cup P^2 (K)$ belongs to $Y$ for every $K \in X$.
\end{claim}
\begin{claimproof}
Let $K \in X$ be given and denote
\[ Z:= \big( K \times \{ 0 \} \big) \cup F(K) \cup P^1 (K) \cup P^2 (K) . \]
It is clear that $Z \subseteq [0,1]^n \times [-1,1]$. Therefore, it suffices to show that $Z$ is closed. To that end, we shall use Lemma \ref{ClosednessOfUnion}. First, thanks to Claim \ref{F(K)inY}, the set $\big( K \times \{ 0 \} \big) \cup F(K)$ is closed. Clearly, $\overline{P_m^i (K)} = P_m^i (K) \cup \big\lbrace \big( f_m^i (K) , 0 \big) \big\rbrace$
for all $i \in \{ 1,2 \}$ and $m \in \N$. Thus, as $f_m^1 (K) \in K$ and $\big( f_m^2 (K) , 0 \big) \in F(K)$ for every $m \in \N$, we indeed have $\overline{P_m^i (K)} \setminus P_m^i (K) \subseteq \big( K \times \{ 0 \} \big) \cup F(K)$ and $\overline{P_m^i (K)} \cap \big( \big( K \times \{ 0 \} \big) \cup F(K) \big) \neq \emptyset$ for every $i \in \{ 1,2 \}$ and $m \in \N$. Finally, for each $i \in \{ 1,2 \}$, it is straightforward to verify that $\mathrm{diam} \big( P_m^i (K) \big) \rightarrow 0$. Hence, by Lemma \ref{ClosednessOfUnion}, the set $Z$ is closed. \claimend
\end{claimproof}
Define the desired mapping $\Phi \colon X \to Y$ by
\[ \Phi (K) = \big( K \times \{ 0 \} \big) \cup F(K) \cup P^1 (K) \cup P^2 (K) . \]
\begin{claim}
The set $\Phi (K)$ is $n$-dimensional and connected for every $K \in X$.
\end{claim}
\begin{claimproof}
Let $K \in X$ be given. The connectedness (even path-connectedness) of $\Phi (K)$ is straightforward, so let us focus on the $n$-dimensionality. By the definition of $X$, it is clear that $K$ is $n$-dimensional. Hence, so is $K \times \{ 0 \}$ and the dimension of $\Phi (K)$ is therefore at least $n$. On the other hand, as the interior of $\Phi (K)$ with respect to $\R^{n+1}$ is empty (since $\Phi (K)$ can easily be shown to be meagre in $\R^{n+1}$), it follows from \cite[Corollary 1.8.4, Theorem 1.8.11]{EngelkingDimension} that the dimension of $\Phi (K)$ is equal to $n$.\claimend
\end{claimproof}
\begin{claim}
The mapping $\Phi$ is a reduction from the homeomorphism equivalence relation on $X$ to the homeomorphism equivalence relation on $Y$.
\end{claim}
\begin{claimproof}
Let $K,L \in X$ be homeomorphic and let $h \colon K \to L$ be an onto homeomorphism. By the domain invariance theorem, $h \big( \Int (K) \big) = \Int (L)$ and $h(\partial K) = \partial L$. Hence, thanks to (1), (3) and (4), it follows from \cite[Proposition 2]{KrupskiVejnar} that there exists an onto homeomorphism
\[ h_0 \colon \big\lbrace f_k^2 (K) \, ; \, k \in \N \big\rbrace \cup \partial K \to \big\lbrace f_k^2 (L) \, ; \, k \in \N \big\rbrace \cup \partial L \]
with $h_0 \big( \big\lbrace f_k^2 (K) \, ; \, k \in \N \big\rbrace \big) = \big\lbrace f_k^2 (L) \, ; \, k \in \N \big\rbrace$ and $h_0 (x) = h(x)$, $x \in \partial K$. Define a mapping
$g \colon K \cup \big\lbrace f_k^2 (K) \, ; \, k \in \N \big\rbrace \to L \cup \big\lbrace f_k^2 (L) \, ; \, k \in \N \big\rbrace$
by
\begin{equation*}
    g(x)= \begin{cases}
    h(x) & \textup{if } x \in K ;\\
    h_0(x) & \textup{if } x \in \big\lbrace f_k^2 (K) \, ; \, k \in \N \big\rbrace .\end{cases}
\end{equation*}
Clearly, $g$ is an onto homeomorphism. Denote $D_0 := \big\lbrace f_k^1 (K) \, ; \, k\in\N \big\rbrace$, $D_2 := \big\lbrace f_k^1 (L) \, ; \, k\in\N \big\rbrace$ and $D_1 := g(D_0) = h(D_0)$. Both of the sets $D_1$, $D_2$ are countable dense subsets of $\Int (L)$. Therefore, by Lemma \ref{SCDH}, there exists an onto homeomorphism $f \colon \R^n \to \R^n$ such that $f(D_1) = D_2$ and $f(x) = x$ for every $x \in \R^n \setminus \Int (L)$. It follows that $f \circ g$ is a homeomorphism between $K \cup \big\lbrace f_k^2 (K) \, ; \, k \in \N \big\rbrace$ and $L \cup \big\lbrace f_k^2 (L) \, ; \, k \in \N \big\rbrace$ and it satisfies:
\begin{itemize}
   \item $(f \circ g) \big( \big\lbrace f_k^i (K) \, ; \, k \in \N \big\rbrace \big) = \big\lbrace f_k^i (L) \, ; \, k \in \N \big\rbrace , \ i \in \{ 1,2 \} ;$
   \item $(f \circ g) (\partial K) = \partial L \, ;$
   \item $(f \circ g) \big( \Int (K) \big) = \Int (L) .$
\end{itemize}
From now on, we can proceed similarly as in the corresponding part of the proof of Theorem \ref{AmbientHomeos} (see the proof of Claim \ref{xxx}) and we come to the conclusion that $\Phi (K)$ is homeomorphic to $\Phi (L)$.

Conversely, assume that we are given $K,L \in X$ with $\Phi (K)$ homeomorphic to $\Phi (L)$. Let $h \colon \Phi (K) \to \Phi (L)$ be an onto homeomorphism. Similarly as in the corresponding part of the proof of Theorem \ref{AmbientHomeos} (see the proof of Claim \ref{yyy}) we deduce that $h \big( \big\lbrace f_k^i (K) \, ; \, k\in\N \big\rbrace \times \{ 0 \} \big) = \big\lbrace f_k^i (L) \, ; \, k\in\N \big\rbrace \times \{ 0 \}$ for every $i \in \{ 1,2 \}$. Hence, thanks to (i), (1) and (3), we have
\[ h \big( \overline{\Int (K)} \times \{ 0 \} \big) = \overline{\Int (L)} \times \{ 0 \} \, , \ \ h \big( \partial K \times \{ 0 \} \big) = \partial L \times \{ 0 \} . \]
Thus, $h \big( K \times \{ 0 \} \big) = L \times \{ 0 \}$. In particular, $K$ is homeomorphic to $L$.\claimend
\end{claimproof}
Finally, proceeding similarly as in the corresponding part of the proof of Theorem \ref{AmbientHomeos} (see the proof of Claim \ref{zzz}), it can be verified that $\Phi$ is Borel measurable.
\end{proof}

\section{Non-classification results}
In this section we prove that neither the homeomorphism ER of absolute retracts in $\R^3$ nor the homeomorphism ER of $1$-dimensional locally connected continua in the plane is classifiable by countable structures.

Recall that if $X$ is a metric space, $Y\subseteq X$ and $\varepsilon>0$ we say that $X$ is $\varepsilon$-deformable into $Y$ if there exists a continuous mapping $\varphi\colon X\times [0,1]\to X$ such that $\varphi(x,0)=x$, $\varphi(x,1)\in Y$ and the diameter of $\varphi(\{x\}\times[0,1])$ is at most $\varepsilon$ for every $x\in X$.
The following lemma was proved in \cite[1.1 and 1.3]{Krasinkiewicz}.

\begin{lemma}\label{AR}
Let $X$ be a compact metric space such that for every $\varepsilon>0$ there exists an absolute retract $Y\subseteq X$ for which $X$ is $\varepsilon$-deformable into $Y$. Then $X$ is an absolute retract.
\end{lemma}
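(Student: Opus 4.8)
The plan is to reduce the statement to two classical facts from the theory of retracts: (1) a contractible compact metrizable ANR is an absolute retract (Borsuk), and (2) a compact metrizable space which is, in a suitable sense, ``$\varepsilon$-dominated'' by ANRs for every $\varepsilon>0$ is itself an ANR. Accordingly, the goal becomes to show that $X$ is a contractible ANR.

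Contractibility is the easy half. Apply the hypothesis with $\varepsilon=1$ to obtain an absolute retract $Y\subseteq X$ and a deformation $\varphi\colon X\times[0,1]\to X$ with $\varphi(x,0)=x$ and $\varphi(x,1)\in Y$. Since $Y$ is an absolute retract it is contractible, so there is a homotopy $\psi\colon Y\times[0,1]\to Y$ with $\psi(y,0)=y$ and $\psi(y,1)=y_0$ for some fixed $y_0$. Concatenating $\varphi$ on $[0,\frac{1}{2}]$ with $(x,t)\mapsto\psi\big(\varphi(x,1),2t-1\big)$ on $[\frac{1}{2},1]$ produces a contraction of $X$ to $y_0$.

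The heart of the matter is that $X$ is an ANR, and here is the route I would take. Embed $X$ as a closed subset of the Hilbert cube $Q$ (or of a compact convex subset of a normed space). Given $\varepsilon>0$, take $Y_\varepsilon\subseteq X$ and a deformation $\varphi_\varepsilon$ with tracks of diameter at most $\varepsilon$, and set $d_\varepsilon:=\varphi_\varepsilon(\cdot,1)\colon X\to Y_\varepsilon$; since $\varphi_\varepsilon(x,0)=x$ we get $\norm{d_\varepsilon(x)-x}\le\varepsilon$ on $X$. As $Q$ is an absolute retract and $X\times[0,1]\cup Q\times\{0\}$ is closed in $Q\times[0,1]$, the map given by $\varphi_\varepsilon$ on $X\times[0,1]$ and by $\mathrm{id}_Q$ on $Q\times\{0\}$ extends to a homotopy $\Phi_\varepsilon\colon Q\times[0,1]\to Q$ with $\Phi_\varepsilon(\cdot,0)=\mathrm{id}_Q$; put $D_\varepsilon:=\Phi_\varepsilon(\cdot,1)$, so $D_\varepsilon|_X=d_\varepsilon$. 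Since $Y_\varepsilon$ is a compact absolute retract it is a retract of $Q$; fixing a retraction $\rho_\varepsilon\colon Q\to Y_\varepsilon$, the composite $\rho_\varepsilon\circ D_\varepsilon\colon Q\to Y_\varepsilon\subseteq X$ restricts on $X$ to $d_\varepsilon$, hence is $\varepsilon$-close to $\mathrm{id}_X$ there; and by the tube lemma, passing to a small enough neighbourhood of $X$ keeps its tracks small near $X$. Thus $X$ is $\varepsilon$-homotopy-dominated by the compact ANR $Y_\varepsilon$ for every $\varepsilon>0$, and by fact (2) (together with (1) and the contractibility above) $X$ is an absolute retract.

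The step I expect to be the real obstacle is deducing ANR-ness from these approximate retractions — equivalently, upgrading the maps $\rho_\varepsilon\circ D_\varepsilon$ to an honest retraction $r\colon Q\to X$, which already finishes the proof since $X$ would then be a retract of the absolute retract $Q$. A direct construction would build maps $r_n\colon Q\to X$ with $r_n(Q)\subseteq X$, with $\sup_{x\in X}\norm{r_n(x)-x}\to0$, and with $\norm{r_{n+1}-r_n}$ summable, so that $r:=\lim r_n$ is the desired retraction. Passing from $r_n$ to $r_{n+1}$ means modifying $r_n$ only on a thin neighbourhood $N_n$ of $X$ and interpolating \emph{inside} $X$ between $r_n(q)$ and the corrected value, using the short paths in $X$ furnished by the deformation $\varphi_{n+1}$ and the uniform local path-connectedness of the ANR $Y_{n+1}$. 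The actual difficulty is the bookkeeping of moduli of continuity: one must fix, in the correct order, the parameter $\varepsilon_{n+1}$, the retraction onto $Y_{n+1}$, a controlled (Dugundji-type) extension of $d_{n+1}$, and only afterwards the neighbourhood $N_n$, so that the errors introduced off $X$ are dominated by $2^{-n}$. This is precisely where the hypothesis is used in full — deformations into absolute retracts with arbitrarily small tracks — and where I expect most of the work to lie; alternatively one can avoid the explicit limit by proving $X$ is an ANR through the nerve/polyhedral-domination characterization and then invoking fact (1).
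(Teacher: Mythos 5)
The paper does not actually prove this lemma: it simply cites Krasinkiewicz \cite[1.1 and 1.3]{Krasinkiewicz}, so there is no in-paper argument for you to match. Your overall plan --- show $X$ is contractible, show $X$ is an ANR, and invoke the classical equivalence of contractible compact ANRs with compact ARs --- is the natural attack, and the contractibility half is correct: concatenating the $\varepsilon=1$ deformation into $Y$ with a contraction of the AR $Y$ gives a contraction of $X$.

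The ANR half is a genuine gap, which to your credit you flag yourself. Your ``fact (2)'', that a compact metric space $\varepsilon$-dominated by ANRs for every $\varepsilon>0$ is itself an ANR, is left vague at exactly the point that matters: $\varepsilon$-\emph{closeness} of $g\circ f$ to $\mathrm{id}_X$ cannot be enough (the Hawaiian earring is $\varepsilon$-close-dominated by finite wedges of circles for every $\varepsilon$, yet is not an ANR), so any usable criterion must involve an $\varepsilon$-\emph{homotopy} --- and stating it correctly and locating it is a substantive task, not a routine citation. The alternative route you sketch --- building a retraction $r\colon Q\to X$ as the uniform limit of maps $r_n\colon Q\to X$ obtained by repeatedly ``interpolating inside $X$'' --- is the right shape of argument and has the advantage of producing an AR directly, bypassing any domination criterion; but as you acknowledge, the interpolation step (pushing two nearby values of $r_n$ into some $Y_\varepsilon$ by the small-track deformation and then joining them by a short path inside $Y_\varepsilon$ using the uniform local path-connectedness of that compact ANR), together with the modulus-of-continuity bookkeeping needed to make $\lVert r_{n+1}-r_n\rVert$ summable, \emph{is} the proof, not a peripheral verification. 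As written the proposal is a plausible roadmap; to close the gap you would need either to carry out the $r_n$ construction in full or to pin down a precise homotopy-domination characterization of compact ANRs and verify that the hypothesis of the lemma delivers it.
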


\begin{theorem}\label{thmAR3}
The homeomorphism equivalence relation of absolute retracts in $\R^3$ is not classifiable by countable structures.
\end{theorem}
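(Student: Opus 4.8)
The plan is to exhibit a Borel reduction to the homeomorphism relation on $\mathsf{AR}(\R^3)$ from an equivalence relation already known not to be classifiable by countable structures. The most convenient source is the homeomorphism relation of $1$-dimensional locally connected continua in the plane, which is not classifiable by countable structures by Theorem \ref{thmLC1}; alternatively one can start from $H_2$, which is strictly more complex than $E_{S_\infty}$ by the results recalled in the introduction, and then pass to compacta with nonempty interior via Lemma \ref{WLOG dim=n}. Given such a planar continuum $C$, I would place it in the plane $\R^2\times\{0\}\subseteq\R^3$ and adjoin structure in the third coordinate that simultaneously ``kills'' the obstruction to being an absolute retract (the nontrivial complementary domains of $C$) and records its homeomorphism type; the resulting compactum $\Phi(C)\subseteq\R^3$ is the image of the reduction.

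The point where Lemma \ref{AR} (Krasinkiewicz) enters is the verification that $\Phi(C)$ is an absolute retract. The adjoined structure is a null sequence of small pieces -- small $2$-cells capping the complementary domains of $C$ (their diameters tend to $0$ by the planar version of Lemma \ref{DiamGoesToZero1}) together with small auxiliary ``markers'' -- each attached along an arc or a point and pushed into the third coordinate. For every $\varepsilon>0$ one produces a finite sub-polyhedron $Y_\varepsilon\subseteq\Phi(C)$ that is an absolute retract (a contractible finite complex, by a standard gluing theorem for absolute retracts, once only the pieces of diameter $\geq\varepsilon$ survive and the remaining ones have been collapsed) together with a self-deformation of $\Phi(C)$ into $Y_\varepsilon$ whose tracks have diameter $<\varepsilon$: one collapses each remaining small piece onto its attaching set, and this is continuous precisely because those pieces form a null sequence accumulating only on the ``core''. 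By Lemma \ref{AR}, $\Phi(C)$ is an absolute retract; since it contains configurations that do not embed in the plane, it genuinely lies in $\R^3$ (consistently with Theorem \ref{thmAR2}).

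That $\Phi$ is a reduction is checked in the usual two steps. For the backward implication, a homeomorphism $\Phi(C)\to\Phi(C')$ must preserve the distinguished, intrinsically characterised subsets -- the $2$-cells, the markers, and the $1$-dimensional part carrying $C$ -- the characterisations being by local invariants such as local dimension and the number and topological type of the connected components produced by removing a single point (here it matters that the relevant piece is at least $2$-dimensional, so that a point deletion does not disconnect it, exactly as the hypothesis $n>1$ is used in Theorem \ref{AmbientHomeos}); hence such a homeomorphism restricts to one recovering $C$, so $C$ is homeomorphic to $C'$. For the forward implication, a homeomorphism $C\to C'$ -- which by domain invariance respects the relevant features -- is first promoted, using Lemma \ref{SCDH} and \cite[Proposition 2]{KrupskiVejnar} in the manner of the proof of Theorem \ref{thmCompCont}, to a homeomorphism matching up the feet of the markers and the cappings, and then propagated over the $2$-cells and markers by rescaling, producing a homeomorphism $\Phi(C)\to\Phi(C')$. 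Borel measurability of $\Phi$ follows as in Claim \ref{zzz}, by writing $\Phi$ as the pointwise limit in $\mathcal{K}(\R^3)$ of the Borel maps that use only finitely many of the adjoined pieces.

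The main obstacle is the first point: designing the adjoined structure so that Lemma \ref{AR} really applies -- that is, so that collapsing all but finitely many small pieces leaves a finite complex which is an absolute retract -- while keeping $\Phi$ both Borel and, crucially, faithful. Faithfulness is delicate here because naive ``resolutions'' of $C$ (capping along arbitrary loops, filling complementary domains, passing to a mapping telescope) tend either to destroy the absolute-retract property or to remember only the shape of $C$ or its ambient position rather than its homeomorphism type; reconciling the absolute-retract requirement with faithfulness is the genuinely new difficulty beyond the constructions of Theorems \ref{AmbientHomeos} and \ref{thmCompCont}.
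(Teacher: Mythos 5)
The proposal takes a genuinely different route from the paper, and it contains a gap that you yourself acknowledge.

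The paper's proof does \emph{not} reduce from $\mathsf{LC}_1(\R^2)$ or from $H_2$. It reduces directly from the relation $E$ on $[1,2]^{\N}$ given by $xEy \iff \lim_{n\to\infty}|x_n-y_n|=0$, which is known (via \cite[Lemma 17]{KrupskiVejnar}) not to be classifiable by countable structures. The construction $\Phi(x)$ is very concrete: a flat rectangular plate $P=[0,\tfrac12]\times[0,3]\times\{0\}$, a fixed null sequence of fan-shaped markers attached along one edge of $P$ to make that edge rigid, and an $x$-dependent null sequence of fan markers attached at $(2^{-n},x_n,0)$ encoding the tail behaviour of $x$. Because the topological data being encoded is just a sequence of real parameters rather than the homeomorphism type of an arbitrary $1$-dimensional Peano continuum, there is no capping of complementary domains and no faithfulness problem at all: a homeomorphism $\Phi(x)\to\Phi(y)$ must fix the rigid edge and must send $(2^{-n},x_n,0)$ to $(2^{-n},y_n,0)$ (these are intrinsically detected by counting components of $\Phi(\cdot)\setminus\{p\}$), and then a compactness argument on subsequential limits shows $xEy$. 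Lemma \ref{AR} is applied to the plate together with finitely many markers, which is easily seen to be an AR, with the remaining markers deformation-retracting to their attaching points.

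The genuine gap in your proposal is exactly the step you flag as "the genuinely new difficulty": you do not give a construction that simultaneously (a) produces an absolute retract, (b) is Borel in $C$, and (c) is faithful in the sense that $\Phi(C_1)\cong\Phi(C_2)\Rightarrow C_1\cong C_2$. For a planar locally connected continuum such as the Sierpi\'nski carpet, the bounded complementary domains have dense union, their boundaries are locally connected continua but need not be Jordan curves, and after capping each with a $2$-cell the $2$-dimensional locus of $\Phi(C)$ would be dense, so recovering the $1$-dimensional "core" $C$ by local dimension (as you propose for the backward implication) does not straightforwardly work. The suggestion to instead start from $H_2$ and use Lemma \ref{WLOG dim=n} runs into the same unresolved capping/faithfulness issue. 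In short, the strategic outline is reasonable, but it leaves the central construction open, whereas the paper avoids the difficulty entirely by choosing a much simpler source relation whose data is trivially encoded by an explicit null sequence of markers.

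A minor further remark: since Theorem \ref{thmAR3} precedes Theorem \ref{thmLC1} in the paper, reducing from $\mathsf{LC}_1(\R^2)$ would also require reordering the two results; that is harmless but worth noting, as the paper in fact proves both theorems independently by reductions from the same relation $E$.
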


\begin{proof}
Consider the Hilbert cube ${[1,2]}^{\N}$ and the equivalence relation $E$ on ${[1,2]}^{\N}$ given by $\, x E y \iff \lim\limits_{n\to \infty} \left| x_n-y_n \right|=0$. (The interval $[1,2]$ was chosen instead of the usual $[0,1]$ for technical reasons only.) It is known that $E$ is not classifiable by countable structures; see \cite[Lemma 17]{KrupskiVejnar} for the proof. Hence, it suffices to show that $E$ is Borel reducible to the homeomorphism ER on $\mathsf{AR} (\R^3)$.

Denote $P:= \big[0 , \frac{1}{2} \big] \times [0,3] \times \lbrace 0 \rbrace$. To each $x \in {[1 , 2 ]}^{\N}$ we are going to assign an absolute retract $\Phi (x) \subseteq \R^3$ consisting of the ``plate" $P$, a fixed sequence of markers connected to $\lbrace 0 \rbrace \times [0,3] \times \{ 0 \}$, which makes this segment rigid, and an $x$-dependent sequence of other markers attached to $\big( 0 , \frac{1}{2} \big] \times [1,2] \times \lbrace 0 \rbrace$ which encode the asymptotic information about the sequence $x$.

For every $n \in \N$ denote ${\mathcal{J}}_n := \lbrace k \in \mathbb{Z} \, ; -n \leq k \leq n \rbrace$ and ${\mathcal{I}}_n := {\mathcal{J}}_n \setminus \lbrace 0 \rbrace$.\\
For every $n \in \N$ and $k \in {\mathcal{I}}_n$ define a mapping $f_k^n \colon \R^2 \to \R^3$ by
\[f_k^n (z,t) = \left( 2^{-n} , \, z+\frac{tk}{n} , \, t \right) .\]
Let ${(q_i)}_{i=1}^{\infty}$ be an injective sequence in $[0,3]$ such that $\overline{\lbrace q_i \, ; \, i \in \N \rbrace} = [0,3]$. For every $n \in \N$ and $k \in {\mathcal{J}}_n$ define a mapping $g_k^n \colon \R \to \R^3$ by
\[g_k^n (t) = \left(-t , \, q_n , \, \frac{tk}{n} \right) .\]
Finally, define a mapping $\Phi \colon {[1,2]}^{\N} \to \mathsf{AR} (\R^3)$ by
\[\Phi (x) = P \cup \left( \bigcup_{n=1}^{\infty} \bigcup_{k \in {\mathcal{J}}_n} g_k^n \Big( \big[0 , 2^{-n} \big] \Big) \right) \cup \left( \bigcup_{n=1}^{\infty} \bigcup_{k \in {\mathcal{I}}_n} f_k^n \Big( \{ x_n \} \times \big[0 , 2^{-n} \big] \Big) \right).\]
The following claim shows that $\Phi$ is well-defined.

\begin{claim}
The set $\Phi(x)$ is an absolute retract for every $x \in {[1,2]}^{\N}$.
\end{claim}

\begin{claimproof}
Let $x \in {[1,2]}^{\N}$ be given. It is straightforward to verify (e.g., using Lemma \ref{ClosednessOfUnion}) that $\Phi(x)$ is a compact set. To show that $\Phi(x)$ is an absolute retract, we shall use Lemma~\ref{AR}. Given $\varepsilon > 0$, we need to verify that there exists an absolute retract $Y \subseteq \Phi (x)$ such that $\Phi (x)$ is $\varepsilon$-deformable into $Y$. Clearly, there is $n_0 \in \N$ such that
\[\mathrm{diam} \Big( g_k^n \big( \big[0 , 2^{-n} \big] \big) \Big) < \varepsilon \, , \ \ \ n > n_0 \, , \ k \in {\mathcal{J}}_n \, , \]
and
\[\mathrm{diam} \Big( f_k^n \Big( \{ x_n \} \times \big[ 0 , 2^{-n} \big] \Big) \Big) < \varepsilon \, , \ \ \ n > n_0 \, , \ k \in {\mathcal{I}}_n \, .\]
Define
\[Y:=P \cup \left( \bigcup_{n=1}^{n_0} \bigcup_{k \in {\mathcal{J}}_n} g_k^n \Big( \big[0 , 2^{-n} \big] \Big) \right) \cup \left( \bigcup_{n=1}^{n_0} \bigcup_{k \in {\mathcal{I}}_n} f_k^n \Big( \{ x_n \} \times \big[0 , 2^{-n} \big] \Big) \right).\]
Then $Y \subseteq \Phi (x)$ and, by \cite[Theorem 1.2.16]{vanMill}, $Y$ is an absolute retract.\\
Define a mapping $\varphi \colon \Phi (x) \times [0,1] \to \Phi (x)$ by
\begin{align*}
    \varphi (p,s) &= p \, ,&&(p,s) \in Y \times [0,1] ,\\
    \varphi \big( g_k^n (t) , s \big) &= g_k^n (t-st) \, ,&&n > n_0 \, , \ k \in {\mathcal{J}}_n \, , \ t \in \big( 0 , 2^{-n} \big] , \ s \in [0,1] ,\\
    \varphi \big( f_k^n ( x_n , t ) , s \big) &= f_k^n ( x_n , t-st ) \, ,&&n > n_0 \, , \ k \in {\mathcal{I}}_n \, , \ t \in \big( 0 , 2^{-n} \big] , \ s \in [0,1] .
\end{align*}
It is straightforward to show that $\varphi$ is continuous, $\varphi \big( \Phi (x) \times \{ 1 \} \big) = Y$, $\varphi (p,0) = p$ for every $p \in \Phi (x)$ and $\mathrm{diam} \big( \varphi \big( \{ p \} \times [0,1] \big) \big) < \varepsilon$ for every $p \in \Phi (x)$. Hence, $\Phi (x)$ is $\varepsilon$-deformable into $Y$.
\claimend
\end{claimproof}

Let us show that $\Phi$ is a reduction from $E$ to the homeomorphism ER on $\mathsf{AR} ( \R^3 )$. Let $x , y \in {[1,2]}^{\N}$ be given.

\begin{claim}
If $xEy$ then $\Phi(x)$ is homeomorphic to $\Phi(y)$.
\end{claim}

\begin{claimproof}
Assume that $x E y$. Define a function $h \colon [1,2] \times [1,2] \times [0,3] \to [0,3]$ by
\begin{equation*}
    h (a,b,t) = \begin{cases}
    \dfrac{tb}{a} & \textup{if } t \leq a\\
    \\
    \dfrac{3b - 3a + t (3-b)}{3-a} & \textup{if } t > a.
\end{cases} \ \ 
\end{equation*}
Then $h$ is continuous and for all $a , b \in [1,2]$ the function $t \mapsto h (a,b,t)$ is a self-homeomorphism of $[0,3]$ and it maps $a$ to $b$.\\
Define a mapping $\varphi \colon \Phi (x) \to \Phi (y)$
by the following formulae:
\begin{gather*}
    \varphi (p) = p \ , \ \ \ \ p \in \big( \{ 0 \} \times [0,3] \times \{ 0 \} \big) \cup \bigcup_{n=1}^{\infty} \bigcup_{k \in {\mathcal{J}}_n} g_k^n \Big( \big[ 0 , 2^{-n} \big] \Big) \, ,\\
    \varphi \big( f_k^n ( x_n , t ) \big) = f_k^n ( y_n , t ) \, , \ \ \ \ n \in \N \, , \ k \in {\mathcal{I}}_n \, , \ t \in \big( 0 , 2^{-n} \big] \, ,\\
    \varphi ( 2^{-n} , t , 0 ) = \big( 2^{-n} , h ( x_n , y_n , t ) , 0 \big) \, , \ \ \ \ n \in \N \, , \ t \in [0,3] \, ,\\
    \varphi \big( s \cdot 2^{-n-1} + (1-s) \cdot 2^{-n} , t , 0 \big) = s \cdot \varphi ( 2^{-n-1} , t , 0 ) + (1-s) \cdot \varphi ( 2^{-n} , t , 0 ) \, ,\\
    n \in \N \, , \ s \in (0,1) \, , \ t \in [0,3] \, .
\end{gather*}
The mapping $\varphi$ is well-defined and it is easy to verify that it is a bijection. Hence, in order to show that $\varphi$ is a homeomorphism it suffices to prove (since $\Phi (x)$ is compact) that $\varphi$ is continuous.
Continuity of $\varphi$ at points of the set $\Phi (x) \setminus \big( \{ 0 \} \times [0,3] \times \{ 0 \} \big)$ is straightforward and it does not depend on the assumption that $x E y$. Let $z \in [0,3]$ and $\varepsilon > 0$ be given. We would like to find $\delta > 0$ such that $\norm{\varphi (p) - \varphi (0,z,0)} < \varepsilon$ for every $p \in \Phi (x)$ satisfying $\norm{p - (0,z,0)} < \delta$. By the assumption that $x E y$, there is $m \in \N$ such that for every $n \in \N$ with $n \geq m$ we have $|x_n-y_n| < \frac{\varepsilon}{2}$. Also, as $h$ is continuous and its domain is compact, $h$ is uniformly continuous. Hence (since $x E y$) there exists $n_0 \in \N$ such that $\big| h (x_n , y_n , t) - h (x_n , x_n , t) \big| < \frac{\varepsilon}{2}$ for every $t \in [0,3]$ and every $n \in \N$ with $n \geq n_0$. It follows that for every $t \in [0,3]$ and every $n \in \N$ satisfying $n \geq n_0$ we have
\begin{align*}
    \norm{\varphi (2^{-n},t,0) - (2^{-n},t,0)} &= \norm{\big( 2^{-n},h(x_n,y_n,t),0 \big) - (2^{-n},t,0)}\\
    &= \norm{\big( 0,h(x_n,y_n,t)-t,0 \big)} = |h(x_n,y_n,t) - t|\\
    &= \big| h(x_n,y_n,t) - h(x_n,x_n,t) \big| < \tfrac{1}{2} \varepsilon .
\end{align*}
Define $\delta := \mathrm{min} \big\lbrace \tfrac{1}{2} \varepsilon , \, 2^{-m} , \, 2^{- {n}_0} \big\rbrace$ and let $p \in \Phi (x)$ be an arbitrary point such that $\norm{p - (0,z,0)} < \delta$. We will distinguish four cases:

If $p = (2^{-n},t,0)$ for some $n \in \N$ and $t \in [0,3]$, then
\[2^{-n} \leq \norm{(2^{-n},t-z,0)} = \norm{p - (0,z,0)} < \delta \leq 2^{-n_0} ,\]hence $n > n_0$, which gives us $\norm{\varphi (2^{-n},t,0) - (2^{-n},t,0)} < \frac{1}{2} \varepsilon$. Therefore,
\begin{gather*}
    \norm{\varphi (p) - \varphi (0,z,0)} = \norm{\varphi (2^{-n},t,0) - (0,z,0)} < \frac{\varepsilon}{2} + \norm{p - (0,z,0)}\\
    < \frac{\varepsilon}{2} + \delta \leq \frac{\varepsilon}{2} + \frac{\varepsilon}{2} = \varepsilon .
\end{gather*}

If $p = \big( s \cdot 2^{-n-1} + (1-s) \cdot 2^{-n} , \ t , \ 0 \big)$ for some $n \in \N$, $s \in (0,1)$ and $t \in [0,3]$, then
\begin{gather*}
    2^{-n-1} \leq s \cdot 2^{-n-1} + (1-s) 2^{-n} \leq \norm{\big( s \cdot 2^{-n-1} + (1-s) 2^{-n} , t-z , 0 \big) }\\
    = \norm{p - (0,z,0)}  < \delta \leq 2^{-n_0} ,
\end{gather*}
hence $n + 1 > n_0$ and $n \geq n_0$, which yields $\norm{\varphi (2^{-n},t,0) - (2^{-n},t,0)} < \tfrac{1}{2} \varepsilon$ and $\norm{\varphi (2^{-n-1},t,0) - (2^{-n-1},t,0)} < \tfrac{1}{2} \varepsilon$. Therefore,
\begin{gather*}
    \norm{\varphi (p) - \varphi (0,z,0)} = \norm{\varphi \big( s \cdot 2^{-n-1} + (1-s) 2^{-n} , t , 0 \big) - (0,z,0)}\\
    = \norm{s \cdot \varphi (2^{-n-1},t,0) + (1-s) \cdot \varphi (2^{-n},t,0) - (0 , z , 0)}\\
    \leq \norm{s \cdot (2^{-n-1},t,0) + (1-s) \cdot (2^{-n},t,0) - (0,z,0)}\\
    + s \cdot \norm{\varphi (2^{-n-1},t,0) - (2^{-n-1},t,0)} + (1-s) \cdot \norm{\varphi (2^{-n},t,0) - (2^{-n},t,0)}\\
    = \norm{p - (0,z,0)} + s \cdot \norm{\varphi (2^{-n-1},t,0) - (2^{-n-1},t,0)}\\
    + (1-s) \cdot \norm{\varphi (2^{-n},t,0) - (2^{-n},t,0)}\\
    < \delta + s \cdot \frac{\varepsilon}{2} + (1-s) \cdot \frac{\varepsilon}{2} = \delta + \frac{\varepsilon}{2} \leq \frac{\varepsilon}{2} + \frac{\varepsilon}{2} = \varepsilon .
\end{gather*}

If $p = f_k^n (x_n,t)$ for some $n \in \N$, $k \in {\mathcal{I}}_n$ and $t \in \big( 0 , 2^{-n} \big]$, then
\begin{gather*}
    2^{-n} \leq \norm{ \Big( 2^{-n} , x_n + \frac{tk}{n} - z , t \Big) } = \norm{ \Big( 2^{-n} , x_n + \frac{tk}{n} , t \Big) - (0,z,0)}\\
    = \norm{f_k^n (x_n,t) - (0,z,0)} = \norm{p - (0,z,0)} < \delta \leq 2^{-m} ,
\end{gather*}
hence $n > m$, which implies that $|x_n-y_n| < \frac{1}{2} \varepsilon$. Therefore,
\begin{gather*}
    \norm{\varphi (p) - \varphi (0,z,0)} = \norm{\varphi \big( f_k^n (x_n,t) \big) - \varphi (0,z,0)} = \norm{f_k^n (y_n,t) - (0,z,0)}\\
    \leq \norm{f_k^n (y_n,t) - f_k^n (x_n,t)} + \norm{f_k^n (x_n,t) - (0,z,0)}\\
    = \norm{ \Big( 2^{-n} , y_n + \frac{tk}{n} , t \Big) - \Big( 2^{-n} , x_n + \frac{tk}{n} , t \Big) } + \norm{p - (0,z,0)}\\
    = \norm{(0,y_n-x_n,0)} + \norm{p - (0,z,0)} = |x_n-y_n| + \norm{p - (0,z,0)}\\
    < \frac{\varepsilon}{2} + \delta \leq \frac{\varepsilon}{2} + \frac{\varepsilon}{2} = \varepsilon .
\end{gather*}
Finally, if $p \in \displaystyle \big( \{ 0 \} \times [0,3] \times \{ 0 \} \big) \cup \bigcup_{n=1}^{\infty} \bigcup_{k \in {\mathcal{J}}_n} g_k^n \big( \big[ 0 , 2^{-n} \big] \big)$, we immediately receive $\norm{\varphi (p) - \varphi (0,z,0)} = \norm{p - (0,z,0)} < \delta \leq \dfrac{\varepsilon}{2} < \varepsilon$. \claimend
\end{claimproof}

\begin{claim}
If $\Phi (x)$ is homeomorphic to $\Phi (y)$ then  $x E y$. 
\end{claim}

\begin{claimproof}
We proceed by contradiction. Assume that $\Phi (x)$ is homeomorphic to $\Phi (y)$, and yet $(x,y) \notin E$, i.e. the sequence $x - y$ does not converge to 0. Let $\varphi \colon \Phi (x) \to \Phi (y)$ be an onto homeomorphism. Observe that for every $w \in {[1,2]}^{\N}$ and every $n \in \N$ the point $(0,q_n,0)$ is the unique point $p \in \Phi (w)$ such that the set $\Phi (w) \setminus \{ p \}$ has exactly $2n+2$ connected components – the sets $g_k^n \big( \big( 0 , 2^{-n} \big] \big)$, $k \in {\mathcal{J}}_n$, and the complement of their union in $\Phi (w) \setminus \{ p \}$. Similarly, for every $w \in {[1,2]}^{\N}$ and every $n \in \N$ the point $(2^{-n},w_n,0)$ is the unique point $p \in \Phi (w)$ such that the set $\Phi (w) \setminus \{ p \}$ has exactly $2n+1$ connected components – the sets $f_k^n \big( \{ w_n \} \times \big( 0 , 2^{-n} \big] \big)$, $k \in {\mathcal{I}}_n$, and the complement of their union in $\Phi (w) \setminus \{ p \}$. Using this observation together with the fact that $\varphi$ is a homeomorphism and the set $\lbrace q_n ; n \in \N \rbrace$ is dense in $[0,3]$, we conclude that $\varphi (p) = p$ for every $p \in \{ 0 \} \times [0,3] \times \{ 0 \}$ and $\varphi (2^{-n},x_n,0) = (2^{-n},y_n,0)$ for every $n \in \N$.

Since $x$ and $y$ are sequences in $[1,2]$ and the sequence $x - y$ does not converge to 0, standard compactness arguments show that we can find two distinct numbers $c_x , c_y \in [1,2]$ and an increasing sequence ${(n_k)_{k=1}^{\infty}}$ of natural numbers such that
\[\lim\limits_{k\to \infty} x_{n_k} = c_x \neq c_y = \lim\limits_{k\to \infty} y_{n_k} .\]
By the continuity of $\varphi$, we have
\[\lim\limits_{k\to \infty} \varphi \big( 2^{-{n_k}},x_{n_k},0 \big) = \varphi (0,c_x,0) = (0,c_x,0) .\]
On the other hand,
\[\lim\limits_{k\to \infty} \varphi \big( 2^{-{n_k}} , x_{n_k} , 0 \big) = \lim\limits_{k\to \infty} \big( 2^{-{n_k}} , y_{n_k} , 0 \big) = (0,c_y,0) ,\]
therefore $c_x = c_y$, which is a contradiction.
\claimend
\end{claimproof}

It remains to verify that $\Phi$ is Borel measurable. But in fact, we prove that $\Phi$ is actually continuous:

\begin{claim}
The mapping $\Phi$ is continuous.
\end{claim}

\begin{claimproof}
Denote by $d_H$ the Hausdorff metric on $\mathcal{K} (\R^3)$ induced by the usual Euclidean metric on $\R^3$. Then $d_H$ is compatible with the topology of $\mathcal{K} (\R^3)$, hence - since we consider $\mathsf{AR} (\R^3)$ with the subspace topology inherited from $\mathcal{K} (\R^3)$ - it suffices to show that $\Phi$ is continuous as a mapping from ${[1,2]}^{\N}$ to the metric space $\big( \mathcal{K} (\R^3) , d_H \big)$.

Let $x \in {[1,2]}^{\N}$ and $\varepsilon > 0$ be given. Clearly, for every $n \in \N$, $k \in {\mathcal{I}}_n$ and $z \in [1,2]$ we have
\begin{gather*}
    \mathrm{diam} \Big( f_k^n \big( \{ z \} \times \big[ 0 , 2^{-n} \big] \big) \Big) = \norm{f_k^n \big( z , 2^{-n} \big) - f_k^n (z,0)} = 2^{-n} \norm{\big( 0 , \tfrac{k}{n} , 1 \big)} \\
    \leq 2^{-n} \norm{(0,1,1)} = \sqrt{2} \cdot 2^{-n}.
\end{gather*}
In particular, there exists $n_0 \in \N$ such that for every $n > n_0$, $k \in {\mathcal{I}}_n$ and $z \in [1,2]$ we have
\[\mathrm{diam} \Big( f_k^n \big( \{ z \} \times \big[ 0 , 2^{-n} \big] \big) \Big) < \varepsilon .\]
Let $U:= \Big\lbrace y \in {[1,2]}^{\N} ; \ \forall \, n \in \N : n \leq n_0 \implies |x_n-y_n| < \varepsilon \Big\rbrace$. Then $U$ is a neighbourhood of $x$. We claim that $d_H \big( \Phi (x) , \Phi (y) \big) < \varepsilon$ for every $y \in U$. Let $y \in U$ be arbitrary. In order to show that $d_H \big( \Phi (x) , \Phi (y) \big) < \varepsilon$ it suffices to prove that for every $p \in \Phi (x)$ there is $q \in \Phi (y)$ with $\norm{p-q} < \varepsilon$ and for every $q \in \Phi (y)$ there is $p \in \Phi (x)$ with $\norm{p-q} < \varepsilon$. Let $p \in \Phi (x)$ be given and denote
\[R:= \bigcup_{n=1}^{\infty} \bigcup_{k \in {\mathcal{J}}_n} g_k^n \big( \big[ 0 , 2^{-n} \big] \big).\]
If $p \in P \cup R$, we take $q:=p$. Then $q \in P \cup R \subseteq \Phi (y)$ and $\norm{p-q} = 0 < \varepsilon$.\\
If $p = f_k^n (x_n,t)$ for some $n \leq n_0$, $k \in {\mathcal{I}}_n$ and $t \in \big[ 0 , 2^{-n} \big]$, let $q:=f_k^n (y_n,t)$. Then $q \in \Phi (y)$ and (using that $y \in U$ and $n \leq n_0$) we have
\[\norm{p-q} = \norm{f_k^n (x_n,t) - f_k^n (y_n,t)} = \norm{(0,x_n-y_n,0)} = |x_n-y_n| < \varepsilon .\]
Finally, if $p = f_k^n (x_n,t)$ for some $n > n_0$, $k \in {\mathcal{I}}_n$ and $t \in \big[ 0 , 2^{-n} \big]$, we define $q:=f_k^n (x_n,0)$. Then
\[q = \big( 2^{-n} , x_n , 0 \big) \in \big[ 0 , \tfrac{1}{4} \big] \times [1,2] \times \lbrace 0 \rbrace \subseteq P \subseteq \Phi (y)\]
and (since $n > n_0$)
\[\norm{p-q} = \norm{f_k^n (x_n,t) - f_k^n (x_n,0)} \leq \mathrm{diam} \Big( f_k^n \big( \lbrace x_n \rbrace \times \big[ 0 , 2^{-n} \big] \big) \Big) < \varepsilon .\]
A symmetrical argument shows that for every $q \in \Phi (y)$ there is $p \in \Phi (x)$ with $\norm{p-q} < \varepsilon$. 
\claimend
\end{claimproof}
\end{proof}

Before moving onto the last theorem of this chapter, let us present a few more lemmata.
\begin{lemma}\label{LocalConnectednessOfUnion}
Let $X$ be a compact metric space and $Y \subseteq X$ a locally connected continuum. Let $\mathcal{A} \subseteq \mathcal{P}(X)$ be a family such that:
\begin{itemize}
    \item every $A \in \mathcal{A}$ intersects Y;
    \item every $A \in \mathcal{A}$ is a locally connected continuum;
    \item the set $\big\lbrace A \in \mathcal{A} \, ; \, \mathrm{diam}(A) \geq \varepsilon \big\rbrace$ is finite for every $\varepsilon > 0$;
    \item $Y \cup \bigcup \mathcal{A} = X$.
\end{itemize}
Then $X$ is a locally connected continuum.
\begin{proof}
It is clear that $X$ is a continuum, we just need to prove that $X$ is locally connected. By \cite[Theorem 8.4]{Nadler}, it suffices to show that for every $\varepsilon > 0$ there is $n \in \N$ and continua $C_1 , \dotsc , C_n \subseteq X$ covering $X$ such that $\mathrm{diam} (C_i) < \varepsilon$ for every $i \in \{ 1 , \dotsc , n \}$. Let $\varepsilon > 0$ be arbitrary. Denote
${\mathcal{A}}_0 := \big\lbrace A \in \mathcal{A} \, ; \, \mathrm{diam} (A) \geq \frac{1}{3} \varepsilon \big\rbrace$ and $X_0 := Y \cup \bigcup {\mathcal{A}}_0$. Then ${\mathcal{A}}_0$ is finite and $X_0$ is a continuum. Moreover, as the union of finitely many locally connected closed sets, $X_0$ is locally connected. Hence, by \cite[Theorem 8.4]{Nadler}, there is $n \in \N$ and continua $K_1 , \dotsc , K_n \subseteq X_0$ covering $X_0$ such that $\mathrm{diam} (K_i) < \frac{1}{3} \varepsilon$ for every $i \in \{ 1 , \dotsc , n \}$. Denote $\Lambda := \{ 1 , \dotsc , n \}$. For every $i \in \Lambda$ let ${\mathcal{A}}_i := \big\lbrace A \in \mathcal{A} \setminus {\mathcal{A}}_0 \, ; \, A \cap K_i \neq \emptyset \big\rbrace$ and $C_i := K_i \cup \bigcup {\mathcal{A}}_i$. Clearly, ${\mathcal{A}}_1 \cup \dotsc \cup {\mathcal{A}}_n = \mathcal{A} \setminus {\mathcal{A}}_0$ and $C_1 \cup \dotsc \cup C_n = X_0 \cup \bigcup (\mathcal{A} \setminus {\mathcal{A}}_0) = Y \cup \bigcup \mathcal{A} = X$. Let us show that each of the sets $C_1 , \dotsc , C_n$ is a continuum with diameter less than $\varepsilon$. Let $i \in \Lambda$ be arbitrary. It is evident that $C_i$ is connected. Also, it is easy to prove (e.g. using Lemma \ref{ClosednessOfUnion}) that $C_i$ is closed in $X$. Thus, $C_i$ is compact. Finally, by the triangle inequality, $\mathrm{diam} (C_i) < \frac{\varepsilon}{3}+\frac{\varepsilon}{3}+\frac{\varepsilon}{3} = \varepsilon$.
\end{proof}
\end{lemma}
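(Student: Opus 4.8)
The plan is to verify that $X$ is a continuum and then to deduce local connectedness from the classical criterion \cite[Theorem 8.4]{Nadler}: a continuum is locally connected precisely when, for every $\varepsilon>0$, it can be written as a union of finitely many subcontinua each of diameter less than $\varepsilon$. That $X$ is a continuum is immediate --- it is compact by hypothesis and connected, since $Y$ is connected, every $A\in\mathcal{A}$ is connected and meets $Y$, and $Y\cup\bigcup\mathcal{A}=X$.

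So fix $\varepsilon>0$. The first step is to isolate the finitely many ``large'' members of $\mathcal{A}$. Let $\mathcal{A}_0:=\{A\in\mathcal{A}\,;\,\mathrm{diam}(A)\ge\varepsilon/3\}$, which is finite by the third hypothesis, and set $X_0:=Y\cup\bigcup\mathcal{A}_0$. Then $X_0$ is a locally connected continuum: it is a finite union of locally connected continua ($Y$ and the members of $\mathcal{A}_0$), each meeting the connected set $Y$, so the union is connected and one may concatenate Peano paths. Applying \cite[Theorem 8.4]{Nadler} to $X_0$ produces an integer $n$ and subcontinua $K_1,\dots,K_n$ of $X_0$, each of diameter less than $\varepsilon/3$, with $K_1\cup\dots\cup K_n=X_0$.

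The second step is to absorb the remaining, small members of $\mathcal{A}$ into the $K_i$'s. Every $A\in\mathcal{A}\setminus\mathcal{A}_0$ has diameter less than $\varepsilon/3$ and meets $Y\subseteq X_0=K_1\cup\dots\cup K_n$, hence it meets some $K_i$; assign $A$ to one such index $i$, obtaining families $\mathcal{A}_i\subseteq\mathcal{A}\setminus\mathcal{A}_0$ with $\mathcal{A}_1\cup\dots\cup\mathcal{A}_n=\mathcal{A}\setminus\mathcal{A}_0$, and put $C_i:=K_i\cup\bigcup\mathcal{A}_i$. Then $C_1\cup\dots\cup C_n=X_0\cup\bigcup(\mathcal{A}\setminus\mathcal{A}_0)=Y\cup\bigcup\mathcal{A}=X$. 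Each $C_i$ is connected, being a union of connected sets each meeting the connected set $K_i$. Each $C_i$ is closed in $X$ by Lemma~\ref{ClosednessOfUnion} applied with $F:=K_i$ and the family $\mathcal{A}_i$: every $A\in\mathcal{A}_i$ is already closed, so $\overline{A}\setminus A=\emptyset\subseteq K_i$; every such $A$ meets $K_i$ by construction; and the diameter-finiteness condition is inherited from $\mathcal{A}$. Hence each $C_i$ is a subcontinuum of $X$, and since any two of its points are joined through $K_i$ using at most three pieces, each of diameter less than $\varepsilon/3$, the triangle inequality gives $\mathrm{diam}(C_i)<\varepsilon$. Thus $X$ is a union of finitely many subcontinua of diameter less than $\varepsilon$, and since $\varepsilon$ was arbitrary, $X$ is a locally connected continuum.

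The one step requiring real care is the closedness of the sets $C_i$ --- this is exactly where the third hypothesis (only finitely many members of $\mathcal{A}$ above any diameter threshold) is needed, via Lemma~\ref{ClosednessOfUnion}. A small but essential point when arranging this is that each small $A\in\mathcal{A}\setminus\mathcal{A}_0$ must be absorbed into some $K_i$ that it genuinely intersects --- a priori we only know it meets $Y$, but since $Y\subseteq\bigcup_i K_i$ it must meet one of the $K_i$, and that is the one to use so that the hypotheses of Lemma~\ref{ClosednessOfUnion} hold with $F=K_i$. The continuum properties of $X$ and $X_0$, the local connectedness of $X_0$, and the diameter estimates are all routine.
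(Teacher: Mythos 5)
Your proof is correct and follows essentially the same route as the paper's: the same $\varepsilon/3$ split into $\mathcal{A}_0$ and $X_0$, the same appeal to Nadler's Theorem 8.4 both for $X_0$ and for the conclusion, and the same absorption of the small members of $\mathcal{A}$ into the $K_i$'s with closedness secured by Lemma~\ref{ClosednessOfUnion}. The only cosmetic difference is that you choose a single $K_i$ for each small $A$ (making the $\mathcal{A}_i$ disjoint) whereas the paper puts $A$ into every $\mathcal{A}_i$ with $A\cap K_i\ne\emptyset$; both choices satisfy the hypotheses of the closedness lemma and yield the same covering.
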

\begin{lemma}\label{MiniLemma}
Let $P$ be a connected topological space and $A \subseteq P$ an open set with connected boundary. Then $P \setminus A$ is connected.
\end{lemma}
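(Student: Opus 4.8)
The plan is to argue by contradiction. Suppose $P \setminus A$ is disconnected; then we may write $P \setminus A = F_1 \cup F_2$, where $F_1, F_2$ are nonempty, disjoint, and relatively clopen in $P \setminus A$. Since $A$ is open, the set $P \setminus A$ is closed in $P$, so both $F_1$ and $F_2$ are closed in $P$ as well. Note also that $P = A \sqcup F_1 \sqcup F_2$ is a genuine partition, since $A$ is disjoint from $P\setminus A = F_1\cup F_2$ and $F_1\cap F_2=\emptyset$.

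First I would locate the boundary. Because $A$ is open, $\partial A = \overline{A}\setminus A \subseteq P\setminus A$, so $\partial A \subseteq F_1\cup F_2$. The sets $\partial A \cap F_1$ and $\partial A \cap F_2$ are disjoint and relatively open in $\partial A$ (being intersections of $\partial A$ with sets open in the subspace $P\setminus A$), and their union is $\partial A$. Since $\partial A$ is connected, one of them is empty; without loss of generality $\partial A \subseteq F_1$, so $F_2 \cap \partial A = \emptyset$.

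Next I would show that $\{A\cup F_1, F_2\}$ is a partition of $P$ into two (disjoint, nonempty once we reach the contradiction) open sets. Since $F_2$ is disjoint from both $A$ and $\partial A$, it is disjoint from $\overline{A} = A\cup\partial A$; hence, using that $F_1$ is closed and $\partial A\subseteq F_1$,
\[
\overline{A\cup F_1} = \overline{A}\cup F_1 = (A\cup\partial A)\cup F_1 = A\cup F_1,
\]
so $A\cup F_1$ is closed in $P$. As $A\cup F_1 = P\setminus F_2$ and $F_2$ is closed, the set $A\cup F_1$ is also open. Thus $P$ is the disjoint union of the open sets $A\cup F_1$ and $F_2$; since $P$ is connected and $F_2\neq\emptyset$, we get $F_2 = P$ and $A\cup F_1 = \emptyset$, whence $F_1 = \emptyset$, contradicting $F_1\neq\emptyset$.

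The argument is entirely elementary point-set topology; the only points requiring care are the bookkeeping of closures (in particular the identity $\overline{A\cup F_1}=A\cup F_1$) and the observation that $P\setminus A$ closed makes $F_1,F_2$ closed in $P$, which is what lets us flip between ``open'' and ``closed''. The trivial edge cases (where $A$ or $P\setminus A$ is empty) cause no difficulty, since a two-piece clopen partition with both pieces nonempty cannot exist there either.
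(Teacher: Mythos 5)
Your argument is correct and is essentially the same as the paper's: assign $\partial A$ to one piece of the decomposition by its connectedness, observe that $A$ together with that piece equals its own closure (hence is closed), and then invoke the connectedness of $P$ to conclude the other piece is empty. The only cosmetic difference is that you phrase it as a proof by contradiction, while the paper works directly from the closed-cover characterization of connectedness.
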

\begin{proof}
Let $C_1 , C_2 \subseteq P \setminus A$ be closed sets (in $P \setminus A$ and thus also in $P$) such that $C_1 \cup C_2 = P \setminus A$ and $C_1 \cap C_2 = \emptyset$. Since $\partial A$ is a connected subset of $P \setminus A$, we can assume without loss of generality that $\partial A \subseteq C_1$. It follows that $A \cup C_1 = \overline{A} \cup C_1$, therefore $A \cup C_1$ is a closed subset of $P$. Finally, as $(A \cup C_1) \cap C_2 = \emptyset$ and $(A \cup C_1) \cup C_2 = P$, we conclude by the connectedness of $P$ that $C_2$ is empty.
\end{proof}
The following lemma is an easy consequence of Lemma \ref{MiniLemma}.

\begin{lemma}\label{PathCon}
Let $T$ be a topological space, $P \subseteq T$ a connected set and $\mathcal{A}$ a finite family of open subsets of $T$ such that:
\begin{enumerate}[label=\textup{(\roman*)}]

\item $\forall \, A \in \mathcal{A} : \ \overline{A} \subseteq P$;

\item $\forall \, A , B \in \mathcal{A} : \ A \neq B \implies \overline{A} \cap \overline{B} = \emptyset$;

\item $\partial A$ is connected for every $A \in \mathcal{A}$.

\end{enumerate}
Then $P \setminus \bigcup \mathcal{A}$ is a connected set.
\end{lemma}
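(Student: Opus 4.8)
The plan is to prove the statement by induction on the number $n$ of members of the finite family $\mathcal{A}$, peeling off one member at a time by means of Lemma~\ref{MiniLemma}. For $n=0$ there is nothing to do, since $P\setminus\bigcup\emptyset=P$ is connected by hypothesis. So suppose the assertion holds for families with $n$ members and that $\mathcal{A}$ has $n+1$ members; fix $A_0\in\mathcal{A}$, and set $\mathcal{A}':=\mathcal{A}\setminus\{A_0\}$ and $P':=P\setminus A_0$.

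First I would verify that Lemma~\ref{MiniLemma} applies to the connected space $P$ and the set $A_0$. Since $A_0$ is open in $T$ and $A_0\subseteq\overline{A_0}\subseteq P$, the set $A_0$ is open in the subspace $P$; and because $\overline{A_0}\subseteq P$, the closure of $A_0$ in $P$ is $\overline{A_0}$, so the boundary of $A_0$ relative to $P$ equals $\overline{A_0}\setminus A_0=\partial A_0$, which is connected by~(iii). Hence Lemma~\ref{MiniLemma} yields that $P'=P\setminus A_0$ is connected. Next I would check that $T$, the connected set $P'$, and the family $\mathcal{A}'$ again satisfy (i)--(iii): $\mathcal{A}'$ is a finite family of open subsets of $T$ with $n$ members, condition~(ii) is inherited verbatim, and~(iii) is unchanged as it only mentions the sets $\partial A$; as for~(i), for every $A\in\mathcal{A}'$ we have $\overline{A}\cap\overline{A_0}=\emptyset$ by~(ii), hence $\overline{A}\cap A_0=\emptyset$, so $\overline{A}\subseteq P\setminus A_0=P'$. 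Applying the inductive hypothesis we get that $P'\setminus\bigcup\mathcal{A}'$ is connected, and since $P'\setminus\bigcup\mathcal{A}'=(P\setminus A_0)\setminus\bigcup\mathcal{A}'=P\setminus\bigcup\mathcal{A}$, the induction is complete.

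The only delicate point, which I would flag as the potential obstacle, is the bookkeeping about which ambient space relative notions (``open'', ``interior'', ``boundary'') are computed in: one must use the hypothesis $\overline{A}\subseteq P$ to see that the $T$-, $P$-, and $P'$-relative versions all agree on the sets that matter, so that Lemma~\ref{MiniLemma} is legitimately applicable and the inductive hypothesis applies unchanged to the smaller family over the same ambient space $T$. Everything else is routine.
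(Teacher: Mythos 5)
Your proof is correct and is exactly the argument the paper has in mind: the paper gives no explicit proof of Lemma~\ref{PathCon}, stating only that it is an easy consequence of Lemma~\ref{MiniLemma}, and the intended argument is precisely the one you carry out — peel off one set of $\mathcal{A}$ at a time via Lemma~\ref{MiniLemma}, using hypothesis~(ii) to see that~(i) still holds for the remaining sets relative to the smaller connected set, and using $\overline{A}\subseteq P$ to reconcile the $T$-relative and $P$-relative notions of openness and boundary. Your care with the relative topologies is the one point that genuinely needs checking, and you handled it correctly.
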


\begin{lemma}\label{GisInsBdG}
Let $G \subseteq \R^2$ be an open bounded set whose boundary is a Jordan curve. Then $G = \ins{ ( \partial G ) }$.
\end{lemma}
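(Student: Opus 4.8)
The plan is to apply the Jordan curve theorem to $J := \partial G$, which decomposes $\R^2 \setminus J$ into the two connected open sets $\ins(J)$ and $\out(J)$ with common boundary $J$, and to combine this with two elementary observations: since $G$ is open we have $\overline{G} \setminus G = \partial G = J$, and consequently $G$ is disjoint from $J$. I would then first note that $G \subseteq \R^2 \setminus J = \ins(J) \cup \out(J)$.

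The key step is to show $G \cap \out(J) = \emptyset$. For this I would write $\out(J) = \big( G \cap \out(J) \big) \cup \big( \out(J) \setminus \overline{G} \big)$. The two sets on the right are relatively open in $\out(J)$ and disjoint (as $G \subseteq \overline{G}$); they cover $\out(J)$ because any point of $\out(J)$ lying in neither would belong to $\overline{G} \setminus G = J$, which is impossible. Since $\out(J)$ is connected, one of the two pieces is empty; it cannot be $\out(J) \setminus \overline{G}$, for that would force $\out(J) \subseteq \overline{G}$, contradicting the fact that $\out(J)$ is unbounded while $\overline{G}$ is bounded. Hence $G \cap \out(J) = \emptyset$, so $G \subseteq \ins(J)$.

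Running the same dichotomy on $\ins(J)$, I would write $\ins(J) = \big( G \cap \ins(J) \big) \cup \big( \ins(J) \setminus \overline{G} \big)$, again a disjoint union of two relatively open sets covering $\ins(J)$; connectedness of $\ins(J)$ makes one of them empty. If $G \cap \ins(J)$ were empty, then together with $G \subseteq \ins(J)$ we would get $G = \emptyset$ and hence $\partial G = \emptyset \neq J$; so instead $\ins(J) \setminus \overline{G} = \emptyset$, i.e. $\ins(J) \subseteq \overline{G}$. Finally, for $x \in \ins(J)$ we have $x \in \overline{G}$, and if $x \notin G$ then $x \in \overline{G} \setminus G = J$, contradicting $\ins(J) \cap J = \emptyset$; thus $\ins(J) \subseteq G$, and with the reverse inclusion already established we conclude $G = \ins(J)$.

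There is no real obstacle here: the only load-bearing input is the Jordan curve theorem (connectedness of both complementary domains and the identity $\partial\,\out(J) = \partial\,\ins(J) = J$), and the single point where the boundedness hypothesis on $G$ is genuinely used is in excluding the possibility $\out(J) \subseteq \overline{G}$; everything else is routine point-set manipulation.
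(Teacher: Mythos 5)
Your proof is correct and follows essentially the same route as the paper's: both first rule out $G \cap \out(\partial G) \neq \emptyset$ by decomposing $\out(\partial G)$ into the disjoint relatively open pieces $G \cap \out(\partial G)$ and $\out(\partial G) \setminus \overline{G}$ and invoking connectedness plus unboundedness, and both then conclude $G = \ins(\partial G)$ by observing $G$ is relatively clopen (equivalently, running the same dichotomy) in the connected set $\ins(\partial G)$ and using that $G$ is nonempty because its boundary is a Jordan curve. The only cosmetic difference is that the paper phrases the first step as a proof by contradiction while you phrase it as a direct dichotomy.
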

\begin{proof}
Let us start by showing that $G$ is disjoint from $\out{( \partial G )}$. Suppose, for a contradiction, that $G \cap \out{( \partial G )} \neq \emptyset$. Since $G$ is bounded and $\out{( \partial G )}$ is unbounded, we have $\out{( \partial G )} \setminus \overline{G} \neq \emptyset$. As $\partial G \cap \out{( \partial G )} = \emptyset$, we receive
\[\out{( \partial G )} = \big( G \cap \out{( \partial G )} \big) \cup \big( \out{( \partial G )} \setminus \overline{G} \big) .\]
Thus, $\out{( \partial G )}$ can be expressed as the union of two nonempty disjoint open sets, which contradicts the fact that $\out{( \partial G )}$ is connected.

Knowing that $G \cap \out{( \partial G )} = \emptyset$, it follows from the openness of $G$ that $G \subseteq \ins{( \partial G )}$. Moreover, since $\partial G \cap \ins{( \partial G )} = \emptyset$, the set $G$ is relatively clopen in $\ins{( \partial G )}$. Thus, by the connectedness of $\ins{( \partial G )}$ and the fact that $G$ is nonempty (which is a consequence of $\partial G$ being a Jordan curve), we finally obtain the equality $G = \ins{( \partial G )}$.
\end{proof}

The following result was originally proved by Whyburn \cite{WhyburnSierpinski}.

\begin{lemma}\label{Carpet1}
Let $X \subseteq \R^2$ be a continuum and denote by $\, \mathcal{U}$ the family of all connected components of $\R^2 \setminus X$. Assume that:
\begin{enumerate}[label=\textup{(\arabic*)},noitemsep]

\item $\, \mathcal{U}$ is infinite;

\item $\forall \ U , V \in \mathcal{U} : \ U \neq V \implies \overline{U} \cap \overline{V} = \emptyset$;

\item the set $\partial U$ is a Jordan curve for every $U \in \mathcal{U}$;

\item the set $\bigcup \{ \partial U ; \ U \in \mathcal{U} \}$ is dense in $X$;

\item the set $\{ U \in \mathcal{U} ; \ \mathrm{diam} (U) \geq \varepsilon \}$ is finite for every $\varepsilon > 0$.
\end{enumerate}
Then $X$ is homeomorphic to the Sierpi\'nski carpet.
\end{lemma}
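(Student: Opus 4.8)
The plan is to recognise $X$ as the Sierpi\'nski carpet through Whyburn's topological characterization \cite{WhyburnSierpinski}, which in the form I would use reads: a subcontinuum $M$ of the sphere $S^2$ is homeomorphic to the Sierpi\'nski carpet if and only if $M$ has empty interior, is locally connected, has no local cut points, and the boundaries of the components of $S^2\setminus M$ are pairwise disjoint simple closed curves. So I would first pass to $S^2=\R^2\cup\{\infty\}$ and note that the hypotheses transfer: the unique unbounded $U_\infty\in\mathcal U$ becomes $U_\infty\cup\{\infty\}$, a component of $S^2\setminus X$ with the same boundary $\partial U_\infty$, which is a Jordan curve by (3); by Lemma \ref{GisInsBdG} every bounded $U\in\mathcal U$ satisfies $U=\ins(\partial U)$, so the components of $S^2\setminus X$ are exactly the open Jordan domains bounded by the curves $\partial U$ ($U\in\mathcal U$), and by (2), (3) these curves are pairwise disjoint; by (1) there are infinitely many of them. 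Emptiness of the interior of $X$ is then immediate: if a nonempty open $W\subseteq\R^2$ were contained in $X$, then by (4) there would be $U\in\mathcal U$ and a point $q\in W\cap\partial U$, and $W$ would be a neighbourhood of $q$ lying in $X$, contradicting $q\in\partial U\subseteq\overline{\R^2\setminus X}$.

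Next I would establish local connectedness of $X$. Enumerate $\mathcal U=\{U_1,U_2,\dots\}$ with $\mathrm{diam}(U_1)\ge\mathrm{diam}(U_2)\ge\cdots$ (possible by (5)); then $X_N:=\R^2\setminus(U_1\cup\cdots\cup U_N)$ equals $X\cup\bigcup_{k>N}\overline{U_k}$, and, by Lemma \ref{GisInsBdG}, $X_N$ is a closed Jordan disk with finitely many pairwise disjoint open Jordan subdisks removed, hence a Peano continuum (its connectedness also follows directly from Lemma \ref{PathCon}). Since $X=\bigcap_N X_N$ and, by (5), for each $\varepsilon>0$ all but finitely many $U_k$ have diameter below $\varepsilon$, I would obtain local connectedness of $X$ from the covering criterion \cite[Theorem~8.4]{Nadler}: cover $X_N$ by finitely many continua of diameter $<\varepsilon/3$, adjoin to each the (small, possibly infinitely many) sets $\overline{U_k}$, $k>N$, that it meets — the resulting sets being closed by Lemma \ref{ClosednessOfUnion} — and intersect these small continua back with $X$, the intersection remaining a continuum by Lemma \ref{PathCon} (each removed $U_k$ is open with connected boundary $\partial U_k\subseteq X$) together with a decreasing-intersection argument to handle infinitely many tiny $U_k$'s. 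Lemma \ref{LocalConnectednessOfUnion} can be used to streamline the final assembly once the pieces are in place.

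It remains to check that $X$ has no local cut points, and I expect this to be the main obstacle. Given $p\in X$, using local connectedness I would take arbitrarily small connected open neighbourhoods $V$ of $p$ in $X$, realised as $V=X\cap B$ for a suitably (generically) chosen round or Jordan-domain neighbourhood $B$ of $p$, and show $V\setminus\{p\}$ is still connected: near $p$ the complement of $X$ is governed by the at most one member of $\mathcal U$ whose closure contains $p$ — whose boundary is a Jordan curve through $p$ — together with the finitely many other large-diameter members meeting $B$ and a diameter-null tail of tiny ones, so $V\setminus\{p\}$ is a punctured disk (or punctured half-disk, in the boundary case) from which open Jordan domains with pairwise disjoint closures have been removed, and Lemmas \ref{MiniLemma} and \ref{PathCon} — which say exactly that deleting open sets with connected boundaries and disjoint closures from a connected set preserves connectedness — plus a limiting argument for the tiny domains give the conclusion. (Equivalently, one may argue by contradiction: a local cut point $p$ of $X$ would be a cut point of the boundary of some component of $S^2\setminus X$, contradicting (3).) The delicate points here are the choice of $B$ so that no boundary curve meets $\partial B$ pathologically and the passage from the finite statement of Lemma \ref{PathCon} to infinitely many vanishing domains.

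With all four conditions verified, Whyburn's characterization \cite{WhyburnSierpinski} gives that $X$ is homeomorphic to the Sierpi\'nski carpet; the shrinking-diameter hypothesis (5) is either part of Whyburn's statement or, since $X$ is now known to be a locally connected compact set, follows automatically from Lemma \ref{DiamGoesToZero1}.
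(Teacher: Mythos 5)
The paper does not prove Lemma~\ref{Carpet1} at all; it presents the statement with the remark that it ``was originally proved by Whyburn \cite{WhyburnSierpinski}'' and offers nothing further. There is therefore no proof in the paper against which to compare your argument. What you have written is a derivation of the lemma from a different formulation of Whyburn's characterization (empty interior, local connectedness, no local cut points, pairwise-disjoint boundary curves of the complementary domains), which amounts to translating one form of the cited theorem into another. That is a legitimate exercise, but it is not something the authors attempt, and, more importantly, your sketch leaves two real gaps open.

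First, the local connectedness step. Your plan is to cover each $X_N$ by small continua and ``intersect these small continua back with $X$,'' invoking Lemma~\ref{PathCon} (finite families) and ``a decreasing-intersection argument'' for the tail. This is not yet a proof: the intersection of a small covering continuum $K_i \subseteq X_N$ with $X$ deletes infinitely many tiny domains from $K_i$, and connectedness of $K_i \cap X$ is precisely the kind of statement that needs the careful nested-intersection reasoning you are gesturing at. Your appeal to Lemma~\ref{LocalConnectednessOfUnion} is also misplaced: that lemma builds a locally connected continuum by \emph{adjoining} a null family of locally connected pieces to a locally connected core, whereas $X$ is obtained by \emph{removing} open sets, and there is no natural decomposition of $X$ into a locally connected $Y$ plus a null family $\mathcal{A}$ of subcontinua that it supplies.

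Second, the ``no local cut points'' step, which you yourself flag as the crux. The main argument (choosing a good disk $B$, observing that at most one $\partial U$ passes through $p$, and deleting a null family of Jordan domains) is plausible in outline but would require a genuine local analysis: near a point $p$ not lying on any $\partial U$, the set $X \cap B$ is \emph{not} a ``punctured disk from which open Jordan domains have been removed'' — the domains are dense (by (4)), so $X$ has empty interior everywhere, and connectedness of $X \cap B \setminus \{p\}$ is exactly the carpet-like statement you are trying to prove. The parenthetical shortcut is incorrect: a local cut point of $X$ need not lie on any $\partial U$, and even when it does, being a local cut point of the ambient continuum $X$ is not the same as being a cut point of the Jordan curve $\partial U$ (Jordan curves have no cut points, so the claimed contradiction would be vacuous rather than useful). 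In short, the decisive part of your argument is the part you have not written.
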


The following lemma is a consequence of the previous one and it will be more useful for us.

\begin{lemma}\label{Carpet2}
Let $J \subseteq \R^2$ be a Jordan curve and $\, \mathcal{V}$ an infinite family of open subsets of $\R^2$ such that:
\begin{enumerate}[label=\textup{(\roman*)},noitemsep]

\item $\forall \ V \in \mathcal{V} : \ \overline{V} \subseteq \ins{(J)}$;

\item $\forall \ U , V \in \mathcal{V} : \ U \neq V \implies \overline{U} \cap \overline{V} = \emptyset$;

\item the set $\partial V$ is a Jordan curve for every $V \in \mathcal{V}$;

\item the set $\bigcup \mathcal{V}$ is dense in $J \cup \ins{(J)}$;

\item the set $\{ V \in \mathcal{V} \, ; \ \mathrm{diam} (V) \geq \varepsilon \}$ is finite for every $\varepsilon > 0$.

\end{enumerate}

Then $\big( J \cup \ins{(J)} \big) \setminus \bigcup \mathcal{V}$ is homeomorphic to the Sierpi\'nski carpet.
\end{lemma}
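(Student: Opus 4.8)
The plan is to verify that the compact set $X := \big( J \cup \ins(J) \big) \setminus \bigcup \mathcal{V}$ satisfies the hypotheses of Lemma \ref{Carpet1}. The first step is to describe $\R^2 \setminus X$. By (i), every $V \in \mathcal{V}$ is a bounded open set with $\overline{V} \subseteq \ins(J)$, so it is disjoint from $J \cup \out(J)$; hence $\R^2 \setminus X = \out(J) \cup \bigcup \mathcal{V}$, and this is a family of pairwise disjoint open sets (distinct members of $\mathcal{V}$ are disjoint because their closures are, by (ii), and each $V$ is disjoint from $\out(J)$ because $V \subseteq \ins(J)$). Moreover each $V \in \mathcal{V}$ is nonempty (its boundary $\partial V$ is a Jordan curve, hence nonempty, which forces $V \neq \emptyset$), and by Lemma \ref{GisInsBdG} applied to $V$ together with (iii) we get $V = \ins(\partial V)$, so $V$ is connected; and $\out(J)$ is connected by the Jordan curve theorem. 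Therefore the family $\mathcal{U}$ of all connected components of $\R^2 \setminus X$ is precisely $\{ \out(J) \} \cup \mathcal{V}$, which is infinite.

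Next I would show that $X$ is a continuum. For each $n \in \N$ put $\mathcal{V}_n := \big\lbrace V \in \mathcal{V} \, ; \, \mathrm{diam}(V) \geq 1/n \big\rbrace$ (finite by (v)) and $X_n := \big( J \cup \ins(J) \big) \setminus \bigcup \mathcal{V}_n$. The set $P := J \cup \ins(J) = \overline{\ins(J)}$ is connected, and applying Lemma \ref{PathCon} with this $P$ and with $\mathcal{A} := \mathcal{V}_n$ — using that $\overline{V} \subseteq \ins(J) \subseteq P$ by (i), that distinct members of $\mathcal{V}_n$ have disjoint closures by (ii), and that each $\partial V$ is connected by (iii) — shows that $X_n$ is connected. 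Each $X_n$ is also compact (a closed subset of the compact set $J \cup \ins(J)$) and nonempty (it contains $J$, since $V \cap J = \emptyset$ for all $V \in \mathcal{V}$), hence a continuum. Since every $V \in \mathcal{V}$ is a nonempty open subset of $\R^2$ and so has positive diameter, we have $\bigcup_n \bigcup \mathcal{V}_n = \bigcup \mathcal{V}$, whence $X = \bigcap_n X_n$ is a decreasing intersection of continua and is therefore itself a continuum.

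It remains to check conditions (1)--(5) of Lemma \ref{Carpet1} for $X$ and $\mathcal{U} = \{ \out(J) \} \cup \mathcal{V}$. Condition (1) is the infinitude of $\mathcal{U}$, noted above. For (2): distinct members of $\mathcal{V}$ have disjoint closures by (ii), while $\overline{\out(J)} = \out(J) \cup J$ is disjoint from $\overline{V} \subseteq \ins(J)$. For (3): $\partial V$ is a Jordan curve by (iii), and $\partial \out(J) = J$ is a Jordan curve. For (5): the set $\big\lbrace U \in \mathcal{U} \, ; \, \mathrm{diam}(U) \geq \varepsilon \big\rbrace$ is contained in $\{ \out(J) \} \cup \big\lbrace V \in \mathcal{V} \, ; \, \mathrm{diam}(V) \geq \varepsilon \big\rbrace$, which is finite by (v). Condition (4), the density of $\bigcup \{ \partial U \, ; \, U \in \mathcal{U} \} = J \cup \bigcup_{V \in \mathcal{V}} \partial V$ in $X$, needs a short argument: given $x \in X \setminus J$ (the case $x \in J$ being immediate) and $r > 0$ small enough that the open ball $B(x,r)$ lies in $\ins(J)$, density of $\bigcup \mathcal{V}$ from (iv) yields $V \in \mathcal{V}$ and a point $y \in B(x,r) \cap V$; the segment from $x$ to $y$ stays in the convex set $B(x,r)$, starts outside $V$ (as $x \in X$) and ends inside $V$, so it meets $\partial V$, producing a point of $\bigcup \{ \partial U \}$ in $B(x,r)$. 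Once (1)--(5) are verified, Lemma \ref{Carpet1} gives that $X$ is homeomorphic to the Sierpi\'nski carpet. The only genuinely delicate step is deducing connectedness of $X$ from the finitary Lemma \ref{PathCon}, which is why the nested continua $X_n$ are introduced; the remaining points are routine bookkeeping with the Jordan curve theorem and Lemma \ref{GisInsBdG}.
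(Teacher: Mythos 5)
Your proof is correct and follows essentially the same route as the paper's: identify $\mathcal{U} = \{\out(J)\} \cup \mathcal{V}$ as the components of $\R^2 \setminus X$ (via Lemma \ref{GisInsBdG}), establish connectedness of $X$ by writing it as a decreasing intersection of continua $X_n$ obtained by removing only finitely many $V$'s at a time and invoking Lemma \ref{PathCon}, verify conditions (1)--(5) of Lemma \ref{Carpet1}, and argue density of the boundaries by intersecting a segment from $x$ to a nearby point of some $V$ with $\partial V$. The only cosmetic difference is that you filter $\mathcal{V}$ by diameter ($\mathcal{V}_n = \{V : \mathrm{diam}(V) \geq 1/n\}$) instead of enumerating $\mathcal{V}$ as a sequence as the paper does; both give the same nested family of continua.
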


\begin{proof}
Let $\mathcal{U}:=\mathcal{V} \cup \{ \out{(J)} \}$ and $X:= \big( J \cup \ins{(J)} \big) \setminus \bigcup \mathcal{V}$. We need to verify the assumptions of Lemma~\ref{Carpet1}. Clearly, $X$ is a compact subset of $\R^2$. Let us prove that $X$ is a continuum: As a disjoint family of open subsets of a separable space, $\mathcal{V}$ is countable. Therefore, we can write $\mathcal{V} = \{ V_k \, ; \, k \in \N \}$. For every $n \in \N$ let
\[ X_n := \big( J \cup \ins{(J)} \big) \setminus \bigcup_{k=1}^n V_k. \]
Trivially, we have $X_{n+1} \subseteq X_n$ for every $n \in \N$. Also, the intersection of the family $\{ X_n \, ; \, n \in \N \}$ is equal to $X$. Thus, it suffices to show that $X_n$ is a continuum for each $n \in \N$. Given an arbitrary $n \in \N$, denote $T := \R^2$, $P:=J \cup \ins (J)$ and $\mathcal{A} := \{ V_k \, ; \, k \leq n \}$. By Lemma \ref{PathCon} (whose assumptions can be easily verified), the set $P \setminus \bigcup \mathcal{A} = X_n$ is connected. Hence, as $X_n$ is clearly compact, we are done.

Now let us prove that $\mathcal{U}$ is the family of all connected components of $\R^2 \setminus X$. By (i), (iii) and Lemma~\ref{GisInsBdG}, we have $V = \ins{(\partial V)}$ for every $V \in \mathcal{V}$. In particular, every $V \in \mathcal{V}$ is connected. Hence, recalling the definition of $X$ and using the fact that $\out{(J)}$ is connected, we see that every member of $\mathcal{U}$ is a connected subset of $\R^2 \setminus X$. Since $\partial (\out{(J)}) = J \subseteq X$, it is clear that $\out{(J)}$ is a connected component of $\R^2 \setminus X$. Given $V \in \mathcal{V}$, let $C$ be the connected component of $\R^2 \setminus X$ containing $V$. By (i), (ii) and the definition of $X$, we have $\partial V \subseteq X$. Hence, $C \cap \partial V = \emptyset$ and therefore $C \setminus \out (\partial V) = C \cap \ins (\partial V) = C \cap V = V$, which shows that $V$ is relatively clopen in $C$. As $V$ is nonempty (by (iii)), it follows that $V = C$. Thus, we conclude that every $U \in \mathcal{U}$ is a connected component of $\R^2 \setminus X$. Moreover,
\[\bigcup \mathcal{U} = (\out{(J)}) \cup \bigcup \mathcal{V} = (\out{(J)}) \cup \big( (J \cup \ins{(J)}) \setminus X \big) = \R^2 \setminus X ,\]
hence $\mathcal{U}$ is indeed the family of all connected components of $\R^2 \setminus X$.

It remains to verify the assumptions (1) through (5) of Lemma~\ref{Carpet1}. As $\mathcal{V}$ is infinite, so is $\mathcal{U}$. Using (i), (ii) and the fact that $\overline{\out{(J)}} = \R^2 \setminus \ins{(J)}$, we conclude that $\overline{U} \cap \overline{V} = \emptyset$ for all $U , V \in \mathcal{U}$ with $U \neq V$. By (iii) and the fact that $\partial (\out{(J)}) = J$, we have that $\partial V$ is a Jordan curve for every $V \in \mathcal{U}$. It immediately follows from (v) and the finiteness of $\mathcal{U} \setminus \mathcal{V}$ that the set $\{ U \in \mathcal{U} \, ; \, \mathrm{diam} (U) \geq \varepsilon \}$ is finite for every $\varepsilon > 0$.

The only thing missing is the density of the set $\bigcup \{ \partial U ; \ U \in \mathcal{U} \}$ in $X$. Let $x \in X$ and $\varepsilon > 0$ be given. We need to show that there is a set $U \in \mathcal{U}$ and a point $y \in \partial U$ such that $\norm{x-y} < \varepsilon$. Let us assume that $x \notin \partial U$ for every $U \in \mathcal{U}$, otherwise we are immediately done. By (iv), the set $\bigcup \mathcal{V}$ is dense in $J \cup \ins{(J)}$, hence (as $x \in X$ and $X \subseteq J \cup \ins{(J)}$) there is a set $U \in \mathcal{V} \subseteq \mathcal{U}$ and a point $z \in U$ such that $\norm{x-z} < \varepsilon$. Consider the line segment $L:= \{ tx+(1-t)z \, ; \ t \in [0,1] \}$. It is a set containing both $z$ and $x$, hence it intersects both $U$ and $\R^2 \setminus \overline{U}$. Therefore, since $L$ is connected, it follows that $L \cap \partial U \neq \emptyset$. Choose an arbitrary point $y \in L \cap \partial U$. Then, evidently, $\norm{x-y} < \norm{x-z} < \varepsilon$.
\end{proof}

\begin{theorem}\label{thmLC1}
The homeomorphism equivalence relation of $1$-dimensional locally connected continua in $\R^2$ is not classifiable by countable structures.
\end{theorem}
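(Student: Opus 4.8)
The plan is to follow the blueprint of the proof of Theorem~\ref{thmAR3}: we reduce to the homeomorphism ER on ${\mathsf{LC}}_1 (\R^2)$ the equivalence relation $E$ on $[1,2]^{\N}$ given by $x\,E\,y \iff \lim_n |x_n-y_n|=0$, which is not classifiable by countable structures by \cite[Lemma~17]{KrupskiVejnar}. The new constraint is that the target continua must be $1$-dimensional, so the two‑dimensional ``plate'' $P$ of that proof has to be replaced by a $1$-dimensional substitute that nevertheless ``fills'' a square and can be rigidified along one of its edges; the Sierpi\'nski carpet is exactly such an object, which is why Lemmata~\ref{Carpet1} and \ref{Carpet2} were prepared. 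Crucially, a carpet sitting densely in a square still provides the two independent real parameters (a ``level'' parameter tending to $0$ and a ``position'' parameter ranging over $[1,2]$ and not shrinking) that are needed to stack infinitely many non-degenerate position‑rails accumulating onto a single rail, while being $1$-dimensional.

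Concretely I would fix a square $S$, use Lemma~\ref{Carpet2} to build a \emph{fixed} Sierpi\'nski carpet $C\subseteq S$ whose boundary contains a non-degenerate rail $D$ together with a sequence of disjoint rails $\ell_n$ converging to $D$, and leaving enough complementary room near $D$ and near each $\ell_n$ so that short arcs may be attached there sticking out of $C$. Then $\Phi(x)$ is the union of $C$, a \emph{fixed} family of marker arcs attaching a strictly increasing (hence distinctive) number of tiny pairwise‑disjoint arcs at the points of a dense sequence $(q_n)$ in $D$ — this is the analogue of the $g^n_k$‑markers — and an \emph{$x$-dependent} family attaching another distinctive number of tiny pairwise‑disjoint arcs at the point $p_n(x)\in\ell_n$ whose position encodes $x_n$ — the analogue of the $f^n_k$‑markers; the level‑$n$ markers are required to have diameter $O(2^{-n})$. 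One then verifies that $\Phi(x)\in{\mathsf{LC}}_1(\R^2)$: compactness follows from Lemma~\ref{ClosednessOfUnion}, local connectedness from Lemma~\ref{LocalConnectednessOfUnion}, connectedness is clear, and $\dim\Phi(x)=1$ because $\Phi(x)$ is meagre in $\R^2$ and contains an arc, by \cite[Corollary~1.8.4, Theorem~1.8.11]{EngelkingDimension}. That $\Phi$ is continuous (hence Borel) is a routine Hausdorff‑metric estimate, identical in spirit to the corresponding claim in the proof of Theorem~\ref{thmAR3}, since the level‑$n$ markers have diameter $O(2^{-n})$.

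For the implication $x\,E\,y\Rightarrow\Phi(x)\cong\Phi(y)$ I would produce the homeomorphism exactly as in Theorem~\ref{thmAR3}: it is the identity on $C$ and on the fixed markers, and on the level‑$n$ variable markers it is the canonical affine correspondence between the marker attached at $p_n(x)$ and the marker attached at $p_n(y)$, realised by a self‑homeomorphism of $\R^2$ supported in a small neighbourhood of $\ell_n$ that carries $p_n(x)$ to $p_n(y)$ while preserving $C$; continuity of the resulting bijection at the points of $D$ follows from $|x_n-y_n|\to0$ together with the diameter bounds, just as before.

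The heart of the argument — and the step I expect to be the main obstacle — is the converse, that a homeomorphism $\varphi\colon\Phi(x)\to\Phi(y)$ respects all of the structure; unlike the rigid plate, the carpet $C$ has a huge homeomorphism group, so the recognition must be done purely topologically. First one shows $\varphi(C)=C$: the interior points of the marker arcs are precisely the points of $\Phi(x)$ having a neighbourhood homeomorphic to an open interval, the free endpoints are the points having a neighbourhood homeomorphic to a half‑open interval, and no point of the carpet has either kind of neighbourhood (by the local structure of the carpet underlying Lemmata~\ref{Carpet1}--\ref{Carpet2}), so $\varphi$ preserves these classes and hence $C$. Next, since the carpet has no cut points, the points $p\in C$ with $\Phi(x)\setminus\{p\}$ disconnected are exactly the marker attachment points, and the number of components of $\Phi(x)\setminus\{p\}$ identifies whether $p$ is a fixed or a variable attachment point and its level; thus $\varphi(0,q_n)=(0,q_n)$ for every $n$ — whence $\varphi\restriction_D=\mathrm{id}_D$ by density of $(q_n)$ and continuity — and $\varphi(p_n(x))=p_n(y)$ for every $n$. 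Finally a compactness argument closes the proof as in Theorem~\ref{thmAR3}: passing to a subsequence with $x_{n_k}\to c_x$ and $y_{n_k}\to c_y$, the points $p_{n_k}(x)$ converge to the point of $D$ determined by $c_x$ and $p_{n_k}(y)$ to the one determined by $c_y$, so continuity of $\varphi$ and $\varphi\restriction_D=\mathrm{id}_D$ force $c_x=c_y$; as this holds for every convergent subsequence, $x\,E\,y$. The delicate points throughout are making these recognition invariants genuinely topological in the presence of the carpet, and arranging the geometry of the second step so that the carpet leaves room for the markers near the rails $D$ and $\ell_n$ while remaining dense in $S$, closed, and meagre (so that $\dim=1$ is preserved).
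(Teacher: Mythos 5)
Your high-level strategy — reduce the same relation $E$ on $[1,2]^\N$ to the homeomorphism ER on $\mathsf{LC}_1(\R^2)$, use a Sierpi\'nski carpet as the $1$-dimensional substitute for the plate, and recognize attachment points by counting connected components of point-complements — matches the paper's, and your recognition argument for the converse implication is essentially the paper's. But there is a substantive difference, and it hides a genuine gap in the forward direction.

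You propose a \emph{fixed} carpet $C$ with fixed rails $\ell_n$, and you encode $x_n$ only through the location of the attachment point $p_n(x)\in\ell_n$. To build the homeomorphism $\Phi(x)\to\Phi(y)$ when $x\,E\,y$ you then need, for each $n$, a self-homeomorphism of $C$ (or of $\R^2$ preserving $C$) that is supported in a small neighbourhood of $\ell_n$, moves $p_n(x)$ to $p_n(y)$ along $\ell_n$, and has distortion controlled by $|x_n-y_n|$ so that the resulting piecewise-defined map is continuous on the accumulation rail $D$. You assert such homeomorphisms exist but do not construct them, and this is not a routine step: the carpet has peripheral circles accumulating on $\ell_n$ from the inside, and a small ``rotation'' along $\ell_n$ must permute those circles, taper to the identity outside a shrinking collar, and do so with uniformly small displacement. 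Whyburn's homogeneity theorem gives \emph{some} homeomorphism taking $p_n(x)$ to $p_n(y)$, but gives no control on its support or its modulus of continuity, and for a generic embedded carpet there is no obvious rotational structure near $\ell_n$ to exploit. This is precisely the difficulty that the paper's construction is designed to avoid: in the paper the carpet $\Phi_0(x)$ itself depends on $x$ — the removed rectangles $(c_n,d_n)\times(\alpha_n(x),\beta_n(x))$ and the five affine images $f_{m_k(n,x)}(\mathcal W_n)$ that tile around them all move with $x_n$ — so the map $\Phi_0(x)\to\Phi_0(y)$ is an explicit piecewise-affine correspondence $f_{m_k(n,y)}\circ f_{m_k(n,x)}^{-1}$ whose continuity at $D=\{0\}\times[0,3]$ is controlled by the single Lipschitz constant $L$ of $f$ on $[0,3]^8$, with no carpet self-homeomorphism lemma required.

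So: the converse direction and the dimension/local-connectedness/Borelness bookkeeping are fine, and your idea of what makes a $1$-dimensional substitute for the plate is the right one. The gap is that ``realised by a self-homeomorphism of $\R^2$ supported in a small neighbourhood of $\ell_n$ that carries $p_n(x)$ to $p_n(y)$ while preserving $C$'' is a non-trivial claim about the carpet that would itself need a proof, together with uniform distortion estimates across $n$. If you want to salvage the fixed-carpet approach you would need to build $C$ with a very particular geometry near each $\ell_n$ (for instance, a local product structure allowing an explicit shear), at which point you have essentially re-derived the paper's device of parameterizing the carpet by $x$ and working with explicit affine maps; it is cleaner to let the carpet vary from the start.
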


\begin{proof}
As in the proof of Theorem~\ref{thmAR3}, consider the equivalence relation $E$ on ${[1,2]}^{\N}$ given by $\, x E y \iff \lim\limits_{n\to \infty} |x_n-y_n|=0$. Again, it suffices to show that $E$ is Borel reducible to the homeomorphism ER on ${\mathsf{LC}}_1 ( \R^2 )$.\\
Define a mapping $f \colon \R^8 \to \R^2$ by
\[f ( a_1 , a_2 , b_1 , b_2 , \widetilde{a_2} , \widetilde{b_2} , s , t ) = (st-s-t+1) \cdot (a_1 , a_2) + (s-st) \cdot (b_1 , b_2)\]
\[+ (t-st) \cdot (a_1 , \widetilde{a_2}) + st \cdot ( b_1 , \widetilde{b_2} ) .\]
Clearly, $f$ is smooth, hence it is Lipschitz on every compact subset of $\R^8$. In particular, there is $L > 0$ such that $f$ is $L$-Lipschitz on ${[0,3]}^8$.

Let $A := \big\lbrace ( a_1 , a_2 , b_1 , b_2 , \widetilde{a_2} , \widetilde{b_2} ) \in \R^6 \, ; \ a_1 < b_1 \, , \ a_2 < \widetilde{a_2} \, , \ b_2 < \widetilde{b_2} \big\rbrace$. For the sake of notational simplicity, let us identify $\R^8$ with $\R^6 \times \R^2$. For every $z \in A$ define a mapping $f_z \colon [0,1]^2 \to \R^2$ by $f_z (p) = f(z,p) \, , \ p \in [0,1]^2$. It is straightforward to show that for every $z = ( a_1 , a_2 , b_1 , b_2 , \widetilde{a_2} , \widetilde{b_2} ) \in A$ the mapping $f_z$ is a homeomorphism between $[0,1]^2$ and the convex hull of
\[\big\lbrace (a_1,a_2) , (b_1,b_2) , (a_1, \widetilde{a_2}) , ( b_1 , \widetilde{b_2}) \big\rbrace .\]
From now on, we will be using this property of the mappings $f_z$, $z \in A$, repeatedly without mentioning it, so bear that in mind when reading the rest of this proof.

Recalling (and slightly modifying) the classical iterative construction of the Sierpi\'nski Carpet, it is easy to see that for every $n \in \N$ it is possible to construct an infinite family ${\mathcal{W}}_n$ of open subsets of $\R^2$ such that:
\begin{itemize}
    \item $\forall \ W \in {\mathcal{W}}_n : \ \overline{W} \subseteq (0,1)^2$;
    \item $\forall \ W_1 , W_2 \in {\mathcal{W}}_n : \ W_1 \neq W_2 \implies \overline{W_1} \cap \overline{W_2} = \emptyset ;$
    \item the set $\partial W$ is a Jordan curve for every $W \in {\mathcal{W}}_n ;$
    \item the set $\bigcup {\mathcal{W}}_n$ is dense in $[0,1]^2$;
    \item the set $\{ W \in {\mathcal{W}}_n \, ; \ \mathrm{diam} (W) \geq \varepsilon \}$ is finite for every $\varepsilon > 0 ;$
    \item $\mathrm{diam} (W) < \frac{1}{n}$ for every $W \in {\mathcal{W}}_n$.
\end{itemize}
For every $n \in \N$ denote $c_n := 2^{-n}$ and $d_n := 2^{-n}+2^{-n-1}$. In addition, for every $n \in \N$ and every $x \in [1,2]^{\N}$ let ${\alpha}_n (x) := x_n - 2^{-n}$, ${\beta}_n (x) := x_n + 2^{-n}$.\\
Denote ${\N}_2 := \{ n \in \N \, ; \ n \geq 2 \}$ and define mappings\\
$m_1 , m_2 , m_3 , m_4 , m_5 \colon {\N}_2 \times [1,2]^{\N} \to A$ by
\begin{align*}
m_1 (n,x) &= \big( c_n , \, 0 , \, d_n , \, 0 , \, {\alpha}_n (x) , \, {\alpha}_n (x) \big) ,\\
m_2 (n,x) &= \big( c_n , \, {\beta}_n (x) , \, d_n , \, {\beta}_n (x) , \, 3 , \, 3 \big) ,\\
m_3 (n,x) &= \big( d_n , \, 0 , \, c_{n-1} , \, 0 , \, {\alpha}_n (x) , \, {\alpha}_{n-1} (x) \big) ,\\
m_4 (n,x) &= \big( d_n , \, {\beta}_n (x) , \, c_{n-1} , \, {\beta}_{n-1} (x) , \, 3 , \, 3 \big) ,\\
m_5 (n,x) &= \big( d_n , \, {\alpha}_n (x) , \, c_{n-1} , \, {\alpha}_{n-1} (x) , \, {\beta}_n (x) , \, {\beta}_{n-1} (x) \big) .\end{align*}
For every $x \in [1,2]^{\N}$ let
\[{\mathcal{V}}_0 (x) := \Big\lbrace (c_n,d_n) \times \big( {\alpha}_n (x) , {\beta}_n (x) \big) \, ; \ n \in {\N}_2 \Big\rbrace .\]
Moreover, for every $k \in \{ 1,2,3,4,5 \}$ and every $x \in [1,2]^{\N}$ let
\[{\mathcal{V}}_k (x) := \Big\lbrace f_{m_k (n,x)} (W) \, ; \ n \in {\N}_2 \, , \ W \in {\mathcal{W}}_n \Big\rbrace .\]
Finally, for every $x \in [1,2]^{\N}$ let
\[\mathcal{V} (x) := {\mathcal{V}}_0 (x) \cup {\mathcal{V}}_1 (x) \cup {\mathcal{V}}_2 (x) \cup {\mathcal{V}}_3 (x) \cup {\mathcal{V}}_4 (x) \cup {\mathcal{V}}_5 (x) .\]
Define a mapping ${\Phi}_0 \colon [1,2]^{\N} \to {\mathsf{LC}}_1 ( \R^2 )$ by
\[{\Phi}_0 (x) = \Big( \big[0, \tfrac{1}{2} \big] \times [0,3] \Big) \setminus \bigcup \mathcal{V} (x) .\]
To see that ${\Phi}_0$ is well-defined we prove the following.

\begin{claim}
The set ${\Phi}_0 (x)$ is homeomorphic to the Sierpi\'nski carpet (which is known to be a 1-dimensional locally connected continuum) for every point $x \in [1,2]^{\N}$.
\end{claim}

\begin{claimproof}
We shall use Lemma~\ref{Carpet2}. Let $x \in [1,2]^{\N}$ be given. Define
\[J:= \partial \Big( \big[ 0 , \tfrac{1}{2} \big] \times [0,3] \Big) = \Big( \big[ 0 , \tfrac{1}{2} \big] \times [0,3] \Big) \setminus \Big( \big( 0 , \tfrac{1}{2} \big) \times (0,3) \Big) .\]
Then $J$ is a Jordan curve and $J \cup \ins{(J)} = \big[ 0 , \tfrac{1}{2} \big] \times [0,3]$, hence
\[{\Phi}_0 (x) = \big( J \cup \ins{(J)} \big) \setminus \bigcup \mathcal{V} (x).\]
Clearly, $\mathcal{V} (x)$ is an infinite family of subsets of $\R^2$. It is also clear that each member of ${\mathcal{V}}_0 (x)$ is an open subset of $\R^2$. Moreover, since $f_{m_k (n,x)}$ is a homeomorphism between $[0,1]^2$ and $f_{m_k (n,x)} \big( [0,1]^2 \big) \subseteq \R^2$ for every $k \in \{ 1,2,3,4,5 \}$ and $n \in {\N}_2$, we conclude (recalling that ${\mathcal{W}}_n$, $n \in \N$, are families of open subsets of $(0,1)^2$ and using the domain invariance theorem) that each member of $\mathcal{V} (x)$ is an open subset of $\R^2$.

Evidently, we have $\overline{V} \subseteq \big( 0 , \tfrac{1}{2} \big) \times (0,3) = \ins{(J)}$ for every $V \in {\mathcal{V}}_0 (x)$. Moreover, for every $k \in \{ 1 , 2 , 3 , 4 , 5 \}$ and $n \in {\N}_2$, the set $f_{m_k (n,x)} \big( [0,1]^2 \big)$ is a subset of $\big[ 0 , \tfrac{1}{2} \big] \times [0,3]$. Hence, using that $\overline{W} \subseteq (0,1)^2$ for every $W \in {\mathcal{W}}_n$, $n \in \N$, it follows that
\[\overline{f_{m_k (n,x)} (W)} = f_{m_k (n,x)} ( \overline{W} ) \subseteq f_{m_k (n,x)} \big( (0,1)^2 \big) \subseteq \big( 0 , \tfrac{1}{2} \big) \times (0,3) = \ins{(J)} ,\]
$k \in \{ 1 , 2 , 3 , 4 , 5 \}$, $n \in {\N}_2$, $W \in {\mathcal{W}}_n$. Thus, $\overline{V} \subseteq \ins{(J)}$ for all $V \in \mathcal{V} (x)$.

Now let us explain why $\overline{U} \cap \overline{V} = \emptyset$ for all $U , V \in \mathcal{V} (x)$ such that $U \neq V$. For notational simplicity, denote $\Lambda := \{ 1 , 2 , 3 , 4 , 5 \}$ and $O := (0,1)^2$. Since $\overline{W_1} \cap \overline{W_2} = \emptyset$ for all $W_1 , W_2 \in {\mathcal{W}}_n$ with $W_1 \neq W_2$ and all $n \in \N$, we have
\[\overline{f_{m_{k} (n,x)} (W_1)} \cap \overline{f_{m_{k} (n,x)} (W_2)} = f_{m_{k} (n,x)} (\overline{W_1}) \cap f_{m_{k} (n,x)} (\overline{W_2}) \]
\[= f_{m_{k} (n,x)} ( \overline{W_1} \cap \overline{W_2} ) = f_{m_{k} (n,x)} (\emptyset) = \emptyset\]
for all $k \in \Lambda$, $n \in {\N}_2$ and $W_1 , W_2 \in {\mathcal{W}}_n$ with $W_1 \neq W_2$. Moreover, it is straightforward to verify that
\[\forall \ U , V \in {\mathcal{V}}_0 (x) : \ U \neq V \implies \overline{U} \cap \overline{V} = \emptyset \, ,\]
\[\forall \, V \in {\mathcal{V}}_0 (x) \, \forall \, k \in \Lambda \ \forall \, n \in {\N}_2 : V \cap f_{m_k (n,x)} (O) = \emptyset \, ,\]
\[\forall \, k , l \in \Lambda \ \forall \, n , \widehat{n} \in {\N}_2 : (k,n) \neq ( l , \widehat{n} ) \implies f_{m_{k} (n,x)} (O) \cap f_{m_{l} ( \widehat{n} , x)} (O) = \emptyset \, .\]
Therefore, as $\overline{f_{m_k (n,x)} (W)} = f_{m_k (n,x)} (\overline{W}) \subseteq f_{m_k (n,x)} (O)$ for all $k \in \Lambda$, $n \in {\N}_2$ and $W \in {\mathcal{W}}_n$, we can indeed conclude that $\overline{U} \cap \overline{V} = \emptyset$ for all $U , V \in \mathcal{V} (x)$ with $U \neq V$.

Clearly, $\partial V$ is a Jordan curve for every $V \in {\mathcal{V}}_0 (x)$. Moreover, for every $k \in \Lambda$, $n \in {\N}_2$ and $W \in {\mathcal{W}}_n$ we have $\partial \big( f_{m_k (n,x)} (W) \big) = f_{m_k (n,x)} (\partial W)$, thus $\partial \big( f_{m_k (n,x)} (W) \big)$ is homeomorphic to $\partial W$, which is a Jordan curve. Hence, $\partial V$ is a Jordan curve for every $V \in \mathcal{V} (x)$.

Now let us prove that $\bigcup \mathcal{V} (x)$ is dense in $J \cup \ins{(J)}$. It is easy to see that
\[\Big( \bigcup {\mathcal{V}}_0 (x) \Big) \cup \bigcup_{k \in \Lambda} \bigcup_{n \in {\N}_2} f_{m_k (n,x)} \big( [0,1]^2 \big) = \big( 0 , \tfrac{1}{2} \big] \times [0,3].\]
Therefore, as $\big( 0 , \tfrac{1}{2} \big] \times [0,3]$ is dense in $\big[ 0 , \tfrac{1}{2} \big] \times [0,3] = J \cup \ins{(J)}$, it suffices to show that $ f_{m_k (n,x)} \big( \bigcup {\mathcal{W}}_n \big)$ is dense in $f_{m_k (n,x)} \big( [0,1]^2 \big)$ for every $k \in \Lambda$ and $n \in {\N}_2$. However, we have
\[\overline{f_{m_k (n,x)} \Big( \bigcup {\mathcal{W}}_n \Big) } = f_{m_k (n,x)} \Big( \overline{\bigcup {\mathcal{W}}_n} \Big) = f_{m_k (n,x)} \big( [0,1]^2 \big)\]
for every $k \in \Lambda$ and $n \in {\N}_2$. Thus, we are done.

Finally, let us show that the set $\{ V \in \mathcal{V} (x) \, ; \, \mathrm{diam} (V) \geq \varepsilon \}$ is finite for every $\varepsilon > 0$. For every $n \in {\N}_2$ we have
\[\mathrm{diam} \Big( (c_n , d_n) \times \big( {\alpha}_n (x) , {\beta}_n (x) \big) \Big) = \norm{\big( c_n - d_n \, , \ {\alpha}_n (x) - {\beta}_n (x) \big) }\]
\[\leq |c_n - d_n| + |{\alpha}_n (x) - {\beta}_n (x)| = 2^{-1-n}+2^{1-n} < 2^{2-n} .\]
In particular, $\{ V \in {\mathcal{V}}_0 (x) \, ; \, \mathrm{diam} (V) \geq \varepsilon \}$ is a finite set for every $\varepsilon > 0$. Now, let $k \in \Lambda$ and $\varepsilon > 0$ be arbitrary. We have to show that the set $\{ V \in {\mathcal{V}}_k (x) \, ; \, \mathrm{diam} (V) \geq \varepsilon \}$ is finite. It is clear that $m_k (n,x) \in [0,3]^6$ for every $n \in {\N}_2$. Moreover, every member of each of the families ${\mathcal{W}}_n$, $n \in \N$, is a subset of $(0,1)^2$. Therefore, since the mapping $f$ is $L$-Lipschitz on $[0,3]^8$ and $f_{m_k (n,x)} (w) = f \big( m_k (n,x) , w \big)$ for all $n \in {\N}_2$, $W \in {\mathcal{W}}_n$ and $w \in W$, it follows that
\[\mathrm{diam} \big( f_{m_k (n,x)} (W) \big) \leq L \cdot \mathrm{diam} (W) \, , \ \ n \in {\N}_2 \, , \, W \in {\mathcal{W}}_n \, .\]
Thus, for all $n \in {\N}_2$ we see that each $W \in {\mathcal{W}}_n$ with $\mathrm{diam} \big( f_{m_k (n,x)} (W) \big) \geq \varepsilon$ belongs to the set $\big\lbrace \widetilde{W} \in {\mathcal{W}}_n \, ; \ \mathrm{diam} ( \widetilde{W} ) \geq \frac{\varepsilon}{L} \big\rbrace$, which is finite. Hence, since $\mathrm{diam} (W) < \frac{1}{n} \leq \frac{\varepsilon}{L}$ for every $n \in \N$ satisfying $n \geq \frac{L}{\varepsilon}$ and every $W \in {\mathcal{W}}_n$, it indeed follows that the set $\{ V \in {\mathcal{V}}_k (x) \, ; \, \mathrm{diam} (V) \geq \varepsilon \}$ is finite.

Having just verified the assumptions of Lemma~\ref{Carpet2}, we know that ${\Phi}_0 (x)$ is homeomorphic to the Sierpi\'nski carpet.
\claimend
\end{claimproof}

Now let us modify ${\Phi}_0$ to make it a reduction from $E$ to the homeomorphism ER on ${\mathsf{LC}}_1 ( \R^2 )$. Let $(q_i)_{i=1}^{\infty}$ be an injective sequence in $[0,3]$ such that the set $\{ q_i \, ; \, i \in \N \}$ is dense in $[0,3]$. For every $i \in \N$ let
\[r_i := \frac{1}{3} \mathrm{min} \Big( \{ 2^{-i} \} \cup \big\lbrace |q_i - q_j| \, ; \, j < i \big\rbrace \Big) .\]
Then $r_i > 0$ for every $i \in \N$ and the sequence $(r_i)_{i=1}^{\infty}$ tends to $0$. For every $n \in \N$ denote ${\mathcal{J}}_n := \{ k \in \mathbb{Z} \, ; -n \leq k \leq n \}$ and ${\mathcal{I}}_n := {\mathcal{J}}_n \setminus \{ 0 \}$. For every $n \in \N$ and $k \in {\mathcal{J}}_n$ define a mapping $g_k^n \colon \R \to \R^2$ by
\[g_k^n (t) = \Big(-t r_n \, , \ q_n + \frac{tk}{n} r_n \Big) .\]
For every $n \in \N$, let $\displaystyle M_n := \bigcup_{k \in {\mathcal{J}}_n} g_k^n \big( [0,1] \big)$. Moreover, define $\displaystyle M := \bigcup_{n=1}^{\infty} M_n$.

It is easy to verify that the sets $M_n$, $n \in \N$, are pairwise disjoint and each of them belongs to ${\mathsf{LC}}_1 ( \R^2 )$. For every $n \in \N$ and $k \in {\mathcal{I}}_n$ define a mapping $h_k^n \colon [1,2] \times \R \to \R^2$ by
\[h_k^n (z,t) = \bigg( 2^{-n} \Big( 1+\frac{t}{4} \Big) \, , \, z + \frac{tk}{2^{n+1} n} \bigg).\]
Then $h_k^n \big( \{ z \} \times [0,1] \big) \in {\mathsf{LC}}_1 ( \R^2 )$ for all $n \in \N$, $k \in {\mathcal{I}}_n$ and $z \in [1,2]$. Also, we have $h_k^n \big( \{ x_n \} \times (0,1] \big) \subseteq (c_n , d_n) \times \big( {\alpha}_n (x) , {\beta}_n (x) \big)$ and $h_k^n (x_n , 0) = (c_n , x_n)$ for all $n \in \N$, $k \in {\mathcal{I}}_n$ and $x \in [1,2]^{\N}$. Define a mapping $\Phi \colon [1,2]^{\N} \to {\mathsf{LC}}_1 ( \R^2 )$ by
\[\Phi (x) = {\Phi}_0 (x) \cup M \cup \bigcup_{n=2}^{\infty} \bigcup_{k \in {\mathcal{I}}_n} h_k^n \big( \{ x_n \} \times [0,1] \big) .\]
Let us verify that $\Phi$ is well-defined.

\begin{claim}
The set $\Phi (x)$ belongs to ${\mathsf{LC}}_1 ( \R^2 )$ for every $x \in [1,2]^{\N}$. 
\end{claim}

\begin{claimproof}
Let $x \in [1,2]^{\N}$ be given. It is easy to verify (e.g. using Lemma~\ref{ClosednessOfUnion}) that $\Phi (x)$ is closed in $\R^2$. Hence, since it is clear that $\Phi (x)$ is bounded, $\Phi (x)$ is compact. Moreover, it is straightforward to check the assumptions of Lemma \ref{LocalConnectednessOfUnion} and thus conclude that $\Phi (x)$ is a locally connected continuum. Finally, as the union of countably many closed 1-dimensional sets, $\Phi (x)$ is 1-dimensional. \claimend
\end{claimproof}

Now let us show that $\Phi$ is a reduction from $E$ to the homeomorphism ER on ${\mathsf{LC}}_1 ( \R^2 )$. Let $x , y \in {\left[1 , 2 \right]}^{\N}$ be given.

\begin{claim}
If $xEy$ then $\Phi(x)$ is homeomorphic to $\Phi(y)$.
\end{claim}

\begin{claimproof}
Assume that $x E y$ and define a mapping $\varphi \colon \Phi (x) \to \Phi (y)$ by
\begin{align*}
\varphi (p) &= f_{m_k (n , y)} \big( f_{m_k (n , x)}^{-1} (p) \big) , \ p \in \Phi (x) \cap f_{m_k (n , x)} \big( [0 , 1]^2 \big) , \, k \in \Lambda , \, n \in {\N}_2\\
\varphi (p) &= p + (0 , y_n - x_n) , \ p \in \Phi (x) \cap \big( [c_n , d_n] \times [{\alpha}_n (x) , {\beta}_n (x)] \big) , \, n \in {\N}_2\\
\varphi (p) &= p , \ p \in M \cup \big( \{ 0 \} \times [0,3] \big) .\end{align*}
It is straightforward to verify that $\varphi$ is well-defined, injective and surjective. Hence, since $\Phi (x)$ is compact, it suffices to show that $\varphi$ is continuous. Let $p \in \Phi (x)$ be arbitrary, we need to show that $\varphi$ is continuous at $p$. We~will consider only the most complicated case, that is, $p \in \{ 0 \} \times [0,3]$. Let~$\varepsilon > 0$ be given. Recall that $f$ is $L$-Lipschitz on the set $[0,3]^8$. Since $xEy$, there is $n_0 \in \N$ such that $|x_n-y_n| < \mathrm{min} \big\lbrace \frac{1}{2} \varepsilon , \frac{1}{8L} \varepsilon \big\rbrace$ for every $n \in \N$ with $n \geq n_0$. Define $\delta := \mathrm{min} \big\lbrace \frac{1}{2} \varepsilon , \, 2^{-n_0} \big\rbrace$ and let $p' \in \Phi (x)$ be an arbitrary point satisfying $\norm{p-p'}< \delta$. We claim that $\norm{\varphi (p) - \varphi (p')} < \varepsilon$. Let us distinguish three cases: If $p' \in M \cup \big( \{ 0 \} \times [0,3] \big)$, we immediately receive
\[ \norm{\varphi (p) - \varphi (p')} = \norm{p-p'} < \delta \leq \tfrac{1}{2} \varepsilon < \varepsilon .\]
If $p'$ lies in $\Phi (x) \cap \big( [c_n , d_n] \times [{\alpha}_n (x) , {\beta}_n (x)] \big)$ for some $n \in {\N}_2$, we clearly have
\[2^{-n_0} \geq \delta > \norm{p-p'} \geq \mathrm{dist} \big( \{ 0 \} \times \R \, , \, [c_n , \infty ) \times \R \big)=c_n=2^{-n},\]
hence $n > n_0$, which implies that $|x_n-y_n| < \frac{1}{2} \varepsilon$. Therefore,
\begin{align*}
\norm{\varphi (p) - \varphi (p')} &= \norm{p-\big( p'+(0 , y_n - x_n) \big)} \leq \norm{p-p'}+\norm{(0 , x_n - y_n)}\\
&< \delta + |x_n-y_n| < \delta + \tfrac{1}{2} \varepsilon \leq \tfrac{1}{2} \varepsilon + \tfrac{1}{2} \varepsilon = \varepsilon.\end{align*}
Finally, if $p' \in \Phi (x) \cap f_{m_k (n , x)} \big( [0 , 1]^2 \big)$ for some $k \in \Lambda$ and $n \in {\N}_2$, there is a unique point $a \in [0 , 1]^2$ such that $f_{m_k (n , x)} (a) = p'$. By the definition of $\varphi$, we have
\begin{align*}
\norm{p'-\varphi (p')} &= \big\lVert p'-f_{m_k (n , y)} \big( f_{m_k (n , x)}^{-1} (p) \big) \big\rVert = \norm{p'-f_{m_k (n , y)} (a)}\\
&= \norm{f_{m_k (n , x)} (a)-f_{m_k (n , y)} (a)}\\
&= \big\lVert f ( m_k (n , x) , a ) - f ( m_k (n , y) , a ) \big\rVert \\
&\leq L \big\lVert ( m_k (n , x) , a ) - ( m_k (n , y) , a ) \big\rVert = L \big\lVert m_k (n , x) - m_k (n , y) \big\rVert . \end{align*}
Moreover, by the definition of the mappings $m_1$, $m_2$, $m_3$, $m_4$, $m_5$, we have
\begin{align*}
    m_1 (n , x) - m_1 (n , y) &= \big( 0 , \, 0 , \, 0 , \, 0 , \, {\alpha}_n (x) - {\alpha}_n (y) , \ {\alpha}_n (x) - {\alpha}_n (y) \big) ,\\
    m_2 (n , x) - m_2 (n , y) &= \big( 0 , \, {\beta}_n (x) - {\beta}_n (y) , \, 0 , \, {\beta}_n (x) - {\beta}_n (y) , \, 0 , \, 0 \big) ,\\
    m_3 (n , x) - m_3 (n , y) &= \big( 0 , \, 0 , \, 0 , \, 0 , \, {\alpha}_n (x) - {\alpha}_n (y) , \ {\alpha}_{n-1} (x) - {\alpha}_{n-1} (y) \big) ,\\
    m_4 (n , x) - m_4 (n , y) &= \big( 0 , \, {\beta}_n (x) - {\beta}_n (y) , \, 0 , \, {\beta}_{n-1} (x) - {\beta}_{n-1} (y) , \, 0 , \, 0 \big)
\end{align*}
and $m_5 (n , x) - m_5 (n , y)=$
\[\big( 0 , \, {\alpha}_n (x) - {\alpha}_n (y) , \, 0 , \, {\alpha}_{n-1} (x) - {\alpha}_{n-1} (y) , \ {\beta}_n (x) - {\beta}_n (y) , \ {\beta}_{n-1} (x) - {\beta}_{n-1} (y) \big) .\]
It follows that
\begin{align*}
    m_1 (n , x) - m_1 (n , y) &= \big( 0 , \, 0 , \, 0 , \, 0 , \, x_n - y_n , \ x_n - y_n \big) ,\\
    m_2 (n , x) - m_2 (n , y) &= \big( 0 , \, x_n - y_n , \, 0 , \, x_n - y_n , \, 0 , \, 0 \big) ,\\
    m_3 (n , x) - m_3 (n , y) &= \big( 0 , \, 0 , \, 0 , \, 0 , \, x_n - y_n , \ x_{n-1} - y_{n-1} \big) ,\\
    m_4 (n , x) - m_4 (n , y) &= \big( 0 , \, x_n - y_n , \, 0 , \, x_{n-1} - y_{n-1} , \, 0 , \, 0 \big) ,\\
    m_5 (n , x) - m_5 (n , y) &= \big( 0 , \, x_n - y_n , \, 0 , \, x_{n-1} - y_{n-1} , \, x_n - y_n , \, x_{n-1} - y_{n-1} \big) ,
\end{align*}
hence $\big\lVert m_l (n , x) - m_l (n , y) \big\rVert \leq 2 \, | x_n - y_n | + 2 \, | x_{n-1} - y_{n-1} |$ for every $l \in \Lambda$. In particular, $\big\lVert m_k (n , x) - m_k (n , y) \big\rVert \leq 2 \, | x_n - y_n | + 2 \, | x_{n-1} - y_{n-1} |$. Also, recalling the definition of the mappings $m_1$, $m_2$, $m_3$, $m_4$, $m_5$ and using the fact that for every $z = ( a_1 , a_2 , b_1 , b_2 , \widetilde{a_2} , \widetilde{b_2} ) \in A$ the mapping $f_z$ maps $[0,1]^2$ onto the convex hull of $\big\lbrace (a_1 , a_2) , (b_1 , b_2) , (a_1 , \widetilde{a_2}) , ( b_1 , \widetilde{b_2} ) \big\rbrace$, we easily conclude that $p' \in [c_n , \infty ) \times \R = [2^{-n} , \infty ) \times \R$. Thus,
\[2^{-n_0} \geq \delta > \norm{p-p'} \geq \mathrm{dist} \big( \{ 0 \} \times \R \, , \, [2^{-n} , \infty ) \times \R \big)=2^{-n}.\]
It follows that $n>n_0$ and $n-1 \geq n_0$, which gives us
\[| x_n - y_n | < \tfrac{1}{8L} \varepsilon \ , \ \ \  | x_{n-1} - y_{n-1} | < \tfrac{1}{8L} \varepsilon \, .\]
Therefore, we have $2 \, | x_n - y_n | + 2 \, | x_{n-1} - y_{n-1} | < \tfrac{2}{8L} \varepsilon + \tfrac{2}{8L} \varepsilon = \frac{1}{2L} \varepsilon$, thus $\big\lVert m_k (n , x) - m_k (n , y) \big\rVert < \frac{1}{2L} \varepsilon$, hence $\norm{p'-\varphi (p')} < L \cdot \frac{1}{2L} \varepsilon = \frac{1}{2} \varepsilon$. Finally,
\[\norm{\varphi (p) - \varphi (p')} = \norm{p - \varphi (p')} \leq \norm{p - p'} + \norm{p' - \varphi (p')} < \delta + \tfrac{1}{2} \varepsilon \leq \varepsilon .\]
\claimend
\end{claimproof}

\begin{claim}
If $\Phi(x)$ is homeomorphic to $\Phi(y)$ then $x E y$. 
\end{claim}

\begin{claimproof}
We proceed by contradiction, similarly to the corresponding part of the proof of Theorem~\ref{thmAR3}. Assume that $\Phi(x)$ is homeomorphic to $\Phi(y)$ and $(x,y) \notin E$. Let $\varphi \colon \Phi (x) \to \Phi (y)$ be an onto homeomorphism. Observe that for every $w \in {\left[1 , 2 \right]}^{\N}$ and $n \in \N$ the point $(0 , q_n)$ is the unique point $p \in \Phi (w)$ such that the set $\Phi (w) \setminus \{ p \}$ has exactly $2n+2$ connected components. Similarly, for every $w \in {\left[1 , 2 \right]}^{\N}$ and every $n \in {\N}_2$ the point $( 2^{-n} , w_n )$ is the unique point $p \in \Phi (w)$ such that the set $\Phi (w) \setminus \{ p \}$ has exactly $2n+1$ connected components. Using this observation, together with the fact that $\varphi$ is a homeomorphism and the set $\left\lbrace q_n ; n \in \N \right\rbrace$ is dense in $\left[0 , 3 \right]$, we conclude that $\varphi (p) = p$ for every $p \in \{ 0 \} \times [0 , 3]$ and $\varphi (2^{-n} , x_n) = (2^{-n} , y_n)$ for every $n \in {\N}_2$. Therefore, repeating the process from the proof of Theorem~\ref{thmAR3}, we easily deduce a contradiction.
\claimend
\end{claimproof}

It remains to show that $\Phi$ is Borel measurable. However, similarly as in the proof of Theorem~\ref{thmAR3}, it can be shown (although it is technically more complicated) that $\Phi$ is actually continuous.
\end{proof}

\bibliographystyle{alpha}
\bibliography{citace}
\end{document}